\documentclass[12pt]{amsart}

\usepackage{amssymb,epsfig}
\usepackage{amsmath,verbatim,psfrag}
\usepackage[all,2cell,dvips]{xy}

\setlength{\parindent}{0in}
\setlength{\parskip}{2ex}

\newcommand{\nc}{\newcommand}
\nc{\nt}{\newtheorem}
\nc{\bs}{\bigskip}
\nc{\dmo}{\DeclareMathOperator}

\nt{main}{Main Theorem}
\nt{thm}{Theorem}[section]
\nt{prop}[thm]{Proposition}
\nt{lem}[thm]{Lemma}
\nt{fact}[thm]{Fact}
\nt{cor}[thm]{Corollary}
\nt{conj}[thm]{Conjecture}
\nt{mainconj}{Conjecture}
\nt{question}[thm]{Question}
\nt{problem}[thm]{Problem}

\nc{\I}{\mathcal{I}}
\nc{\Br}{\mathcal{H}}
\nc{\T}{\mathcal{T}}
\nc{\K}{\mathcal{K}}
\nc{\SI}{\mathcal{SI}}
\nc{\SL}{\mathcal{SL}}
\nc{\SK}{\mathcal{SK}}
\nc{\ST}{\mathcal{ST}}
\nc{\SIBK}{\mathcal{SIBK}}
\nc{\SBK}{\mathcal{SBK}}
\nc{\Push}{\mathcal Push}
\dmo{\Mod}{Mod}
\dmo{\PMod}{PMod}
\dmo{\SMod}{SMod}
\dmo{\SHomeo}{SHomeo}
\dmo{\Homeo}{Homeo}
\dmo{\Sp}{Sp}
\dmo{\SSp}{SSp}
\dmo{\Stab}{Stab}
\dmo{\Isom}{Isom}
\dmo{\Comm}{Comm}
\dmo{\B}{B}
\dmo{\PB}{PB}

\nc{\Z}{\mathbb{Z}}
\nc{\R}{\mathbb{R}}

\nc{\p}[1]{\medskip\paragraph{{\bf #1}}}
\nc{\margin}[1]{\marginpar{\scriptsize #1}}

\nc{\bl}{ \begin{list}{$\cdot$}{
\setlength{\leftmargin}{.5in}
\setlength{\rightmargin}{.5in}
\setlength{\parsep}{0.5ex plus .2ex minus 0ex}
\setlength{\itemsep}{0.2ex plus 0.2ex minus 0ex}
}
}

\nc{\el}{\end{list}}

\title{Point pushing, homology, and the hyperelliptic involution}

\begin{document}
	
\input{epsf.sty}

\author{Tara E. Brendle}

\author{Dan Margalit}

\address{Tara E. Brendle \\ School of Mathematics \& Statistics \\ University Gardens \\ University of Glasgow \\ G12 8QW \\ tara.brendle@glasgow.ac.uk}

\address{Dan Margalit \\ School of Mathematics\\ Georgia Institute of Technology \\ 686 Cherry St. \\ Atlanta, GA 30332 \\  margalit@math.gatech.edu}

\thanks{The second author gratefully acknowledges support from the Sloan Foundation and the National Science Foundation.}

\keywords{Torelli group, Johnson kernel, hyperelliptic mapping class group, Birman exact sequence, Burau representation}

\subjclass[2000]{Primary: 20F36; Secondary: 57M07}

\begin{abstract}
The hyperelliptic Torelli group is the subgroup of the mapping class group consisting of elements that act trivially on the homology of the surface and that also commute with some fixed hyperelliptic involution.  We prove a Birman exact sequence for hyperelliptic Torelli groups, and we show that this sequence splits.  As a consequence, we show that the hyperelliptic Torelli group is generated by Dehn twists if and only if it is generated by reducible elements.  We also give an application to the kernel of the Burau representation.
\end{abstract}

\maketitle

\section{Introduction}

Let $S_g$ denote a closed, connected, orientable surface of genus $g$.  The hyperelliptic Torelli group $\SI(S_g)$ is the subgroup of the mapping class group $\Mod(S_g)$ consisting of all elements that act trivially on $H_1(S_g;\Z)$ and that commute with the isotopy class of  some fixed hyperelliptic involution $s : S_g \to S_g$ (see Figure~\ref{figure:hi} below).

The group $\SI(S_g)$ arises in algebraic geometry in the following context.  Let $\T(S_g)$ denote the cover of the moduli space of Riemann surfaces corresponding to the Torelli subgroup $\I(S_g)$ of $\Mod(S_g)$.  This is the subgroup of $\Mod(S_g)$ consisting of all elements acting trivially on $H_1(S_g;\Z)$.  The period mapping is a function on $\T(S_g)$ whose image lies in the Siegel upper half-space of rank $g$.  This map is branched over the subset $\Br(S_g)$ of $\T(S_g)$ that is fixed by the action of $s$ and
\[ \pi_1(\Br(S_g)) \cong \SI(S_g). \]
Because of this, $\SI(S_g)$ is related, for example, to the topological Schottky problem; see \cite[Problem 1]{hain}.

A basic tool in the theory of mapping class groups is the Birman exact sequence.  This sequence relates the mapping class group of a surface with marked points to the mapping class group of the surface obtained by forgetting the marked points; see Section~\ref{section:bes}.  This is a key ingredient for performing inductive arguments on the mapping class group.  For instance, the standard proof that $\Mod(S_g)$ is generated by Dehn twists uses the Birman exact sequence in the inductive step on genus.

The main goal of this paper is to provide a Birman exact sequence for $\SI(S_g)$.  As in the case of $\Mod(S_g)$, the Birman exact sequence is crucial for inductive arguments.  For example, the authors and Childers have recently used our Birman exact sequences in order to show that the top-dimensional homology of $\SI(S_g)$ is infinitely generated \cite{cdsi}.

\p{Main theorems} In order to state our main theorems, we need to define the hyperelliptic Torelli group for a marked surface.  The hyperelliptic Torelli group $\SI(S_g,P)$ of the surface $S_g$ with marked points $P$ is the group of isotopy classes of homeomorphisms of $S_g$ that commute with $s$, that preserve the set $P$, and that act trivially on the homology of $S_g$ relative to $P$.

There is a forgetful homomorphism $\SI(S_g,P) \to \SI(S_g)$, and our Birman exact sequences give a precise description of the kernel in two cases.  In the first case, we show that the kernel is trivial, and so the Birman exact sequence degenerates to an isomorphism.

\begin{thm}
\label{thm:sibes1}
Let $g \geq 0$ and let $P$ be a single point in $S_g$ fixed by $s$.  The forgetful map $\SI(S_g,P) \to \SI(S_g)$ is an isomorphism.
\end{thm}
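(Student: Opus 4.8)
The plan is to analyze the Birman exact sequence for a single marked point and identify the kernel of the forgetful map $\SI(S_g,P)\to\SI(S_g)$ with the image of a point-pushing subgroup. Recall that for the full mapping class group, the Birman exact sequence reads
\[
1 \longrightarrow \pi_1(S_g,P) \stackrel{\Push}{\longrightarrow} \Mod(S_g,P) \longrightarrow \Mod(S_g) \longrightarrow 1,
\]
(valid for $g\geq 2$; for $g\leq 1$ one works with the appropriate variants taking into account the center or the fact that $\pi_1$ is trivial). The kernel of the forgetful map restricted to $\SI(S_g,P)$ consists of those point-pushing maps $\Push(\gamma)$ that both commute with $s$ and act trivially on $H_1(S_g,P;\Z)$. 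So the proof reduces to showing that no nontrivial element of $\Push(\pi_1(S_g,P))$ can satisfy both of these conditions simultaneously.

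First I would handle the homological condition. A point-pushing map $\Push(\gamma)$ acts on $H_1(S_g,P;\Z)$, and there is a standard formula (the "transvection-like" action, computed by Johnson and others) describing how $\Push(\gamma)$ acts on the relative homology: the class of a relative cycle $\delta$ gets sent to $\delta$ plus a term involving the algebraic intersection of $\gamma$ with $\delta$ times the class of $\gamma$ in $H_1(S_g;\Z)$. Working this out, one sees that $\Push(\gamma)$ acts trivially on $H_1(S_g,P;\Z)$ if and only if $\gamma$ is null-homologous in $H_1(S_g;\Z)$, i.e. $[\gamma]=0$. Thus the kernel of $\SI(S_g,P)\to\SI(S_g)$ is contained in $\Push(K)$, where $K = \ker(\pi_1(S_g,P)\to H_1(S_g;\Z))$ is the commutator subgroup. (Here I need to be careful about whether the relevant homology is $H_1$ rel $P$ or $H_1$ of the punctured surface, but for a single marked point these give the relevant conclusion; I'd double-check the exact statement of what "acts trivially on the homology of $S_g$ relative to $P$" means against the action formula.)

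Next I would impose the centralizing condition, $s \Push(\gamma) s^{-1} = \Push(\gamma)$. Using the equivariance relation $s\,\Push(\gamma)\,s^{-1} = \Push(s_*\gamma)$ — valid because $P$ is fixed by $s$, so $s$ descends to an automorphism of $\pi_1(S_g,P)$ — and the injectivity of $\Push$, this becomes the condition $s_*\gamma = \gamma$ in $\pi_1(S_g,P)$. Now the hyperelliptic involution acts on $H_1(S_g;\Z)$ as $-I$; hence on any $\gamma$ with $[\gamma]\neq 0$ we'd have $s_*\gamma \neq \gamma$ already at the homology level, which is consistent with the previous paragraph but not yet enough. I need: the only $\gamma\in\pi_1(S_g,P)$ fixed by $s_*$ is the trivial element. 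Since $s$ has $2g+2$ fixed points and $P$ is one of them, the action of $s_*$ on $\pi_1(S_g,P)\cong F_{2g}$ (a free group once we puncture, or the surface group with its relation) can be understood via the Birman--Hilden theory: $S_g/s$ is a sphere with $2g+2$ cone points, and $\pi_1$ of the quotient orbifold (minus the image of $P$) is a free product of cyclic groups; a loop fixed by $s_*$ would have to be a loop that lifts to a loop, forcing it (together with the homological triviality) to be trivial. I would make this precise by passing to the double branched cover description and arguing that a nontrivial $s_*$-invariant element would give rise to a nontrivial element of $\pi_1(S_g)$ that is both in the commutator subgroup and invariant under $-I$-type symmetry, then derive a contradiction; alternatively, a direct fixed-point analysis of the $\Z/2$-action on the free group $\pi_1(S_g\setminus P)$.

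The main obstacle I anticipate is precisely this last step: showing that the $s_*$-invariant subgroup of $\pi_1(S_g,P)$ is trivial (equivalently, that $\Push$ contributes nothing to $\SI(S_g,P)$). The homological part is a routine computation, and the equivariance of $\Push$ under $s$ is formal; but ruling out all invariant based loops requires genuinely using the geometry of the hyperelliptic involution near its fixed point $P$ — e.g. that $s$ acts as a rotation by $\pi$ in a disk around $P$, so any $s$-invariant based loop would have to "come back on itself" in a way incompatible with being a simple closed-curve-free essential loop unless it is nullhomotopic. I would also need to check the low-genus cases $g=0,1$ separately: for $g=0$ the group $\SI(S_0)$ and $\SI(S_0,P)$ are trivial or finite and the statement is immediate; for $g=1$, $\pi_1(S_1)\cong\Z^2$ and the hyperelliptic involution acts as $-I$, so the only invariant element is $0$, giving the result directly, and one must confirm the Birman exact sequence is set up correctly in that case (the center issue for the torus).
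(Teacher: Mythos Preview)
Your approach correctly sets up the Birman exact sequence and correctly isolates the key condition $s_*\gamma=\gamma$ in $\pi_1(S_g,P)$. However, there are two genuine gaps.

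First, you do not address surjectivity at all. The forgetful map $\SMod(S_g,p)\to\SMod(S_g)$ is \emph{not} surjective for $g\geq 2$ (for instance, a Dehn twist about a symmetric curve passing through $p$ is not in the image), so surjectivity of $\SI(S_g,p)\to\SI(S_g)$ requires an argument. The paper handles this via Birman--Hilden: any $f\in\SI(S_g)$ descends to an element of $\PMod(S_{0,2g+2})$ (this is Lemma~\ref{lem:torelli pure}), hence fixes each of the $2g+2$ branch points individually, in particular $p$; the representative homeomorphism therefore already defines an element of $\SI(S_g,p)$ mapping to $f$.

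Second, your homological step is not correct as stated, and in fact contributes nothing. For a single marked point $P$, the long exact sequence of the pair gives $H_1(S_g,P;\Z)\cong H_1(S_g;\Z)$, and every point-push acts trivially on $H_1(S_g;\Z)$ since it becomes the identity in $\Mod(S_g)$. Your transvection formula is the one for arcs between \emph{distinct} marked points, which do not exist here; the Torelli condition is therefore vacuous on $\Push(\pi_1(S_g))$, and the entire burden falls on the centralizing condition $s_*\gamma=\gamma$. You flag this as the main obstacle but do not resolve it. The paper's argument (Theorem~\ref{thm:sbes1}) is short and geometric: put a hyperbolic metric on $S_g$ with $s$ an isometry, and lift $s$ to the unique order-two isometry $\widetilde s$ of $\mathbb{H}^2$ fixing a chosen lift $\widetilde p$ of $p$. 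Then $\widetilde s$ is a rotation by $\pi$, hence has $\widetilde p$ as its \emph{only} fixed point; but $s_*\gamma=\gamma$ would force $\widetilde s(\gamma\cdot\widetilde p)=\gamma\cdot\widetilde p$, so $\gamma=1$. Note this actually proves the stronger statement that $\SMod(S_g,p)\to\SMod(S_g)$ is injective, so the Torelli hypothesis is never used for injectivity.
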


Let $S_g^1$ denote a surface of genus $g$ with one boundary component.  We prove in Theorem~\ref{thm:uncapping} that
\[ \SI(S_g^1) \cong \SI(S_g) \times \Z. \]
It is surprising to realize $\SI(S_g)$ as a subgroup of $\SI(S_g^1)$ since there is no inclusion $S_g \to S_g^1$.

Next we consider the case where $P$ is a pair of distinct points interchanged by $s$.  The Birman--Hilden theorem (Theorem~\ref{thm:sbes2}) identifies the kernel of $\SI(S_g,P) \to \SI(S_g)$ as a subgroup of $F_{2g+1} \cong \pi_1(S_{0,2g+2})$, where $S_{0,2g+2}$ is a sphere with $2g+2$ punctures.  Denote the generators of $F_{2g+1}$ by $\zeta_1,\dots,\zeta_{2g+1}$ and the generators for the group $\Z^{2g+1}$ by $e_1, \dots, e_{2g+1}$.  Denote by $F_{2g+1}^{even}$ the subgroup of $F_{2g+1}$ consisting of all even length words in the $\zeta_i$.  There is a homomorphism $\epsilon : F_{2g+1}^{even} \to \Z^{2g+1}$ defined by 
\[ \zeta_{i_1}^{\alpha_1} \zeta_{i_2}^{\alpha_2} \mapsto e_{i_1} - e_{i_2} \]
where $\alpha_j = \pm 1$ for each $j$.

\begin{thm}
\label{thm:algchar}
Let $g \geq 1$, and let $P$ be a pair of distinct points in $S_g$ interchanged by $s$.  If we identify the kernel of the map $\SI(S_g,P) \to \SI(S_g)$ with a subgroup of $F_{2g+1}$ as above, then the following sequence is split exact:
\[ 1 \to \ker \epsilon \to \SI(S_g,P) \to \SI(S_g) \to 1. \]
\end{thm}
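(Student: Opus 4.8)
The plan is to combine the Birman--Hilden identification of the kernel (Theorem~\ref{thm:sbes2}) with an explicit computation of the action of a lifted point-pushing map on the relative homology $H_1(S_g,P;\Z)$, and then to obtain the splitting from the uncapping isomorphism $\SI(S_g^1)\cong\SI(S_g)\times\Z$ (Theorem~\ref{thm:uncapping}). Let $q\colon S_g\to S_{0,2g+2}$ be the branched double cover determined by $s$, with branch set the $2g+2$ fixed points of $s$; since the two points of $P$ are interchanged by $s$, they descend to a single non-branch point $\bar p$, and restricting $q$ to $S_g^\circ:=S_g\setminus\{\text{fixed points of }s\}$ yields an honest double cover of $S_{0,2g+2}$. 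Under the standard identification $\pi_1(S_{0,2g+2},\bar p)\cong F_{2g+1}=\langle\zeta_1,\dots,\zeta_{2g+1}\rangle$ this cover corresponds to $\ker(F_{2g+1}\to\Z/2)=F_{2g+1}^{even}$, and lifting loops at the chosen preimage $p_1\in P$ identifies $F_{2g+1}^{even}$ with $\pi_1(S_g^\circ,p_1)$. The forgetful map $\SI(S_g,P)\to\SI(S_g)$ is well defined, since acting trivially on $H_1(S_g,P;\Z)$ forces acting trivially on the summand $H_1(S_g;\Z)$; by Theorem~\ref{thm:sbes2} its kernel $K$ is identified with the set of $\gamma\in F_{2g+1}$ whose point-pushing class of $\bar p$ lifts to an element of $\SI(S_g,P)$. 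A word $\gamma$ of odd length in the $\zeta_i$ lifts to a path from $p_1$ to $p_2$, so its lifted mapping class interchanges the points of $P$ and therefore acts by $-1$ on $H_1(S_g,P;\Z)/H_1(S_g;\Z)\cong\Z$; hence $K\subseteq F_{2g+1}^{even}$.

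For $\gamma\in F_{2g+1}^{even}$, let $g_\gamma\in K$ be the $s$-equivariant map pushing $p_1$ around the lift $\tilde\gamma_1$ of $\gamma$ at $p_1$ and $p_2$ around $\tilde\gamma_2=s(\tilde\gamma_1)$. Fix an $s$-invariant arc $\alpha$ from $p_1$ to $p_2$ passing through a fixed point of $s$, so $H_1(S_g,P;\Z)=H_1(S_g;\Z)\oplus\Z\langle[\alpha]\rangle$. Since $g_\gamma$ is trivial in $\Mod(S_g)$ it acts trivially on $H_1(S_g;\Z)$, and since it does not interchange the two points of $P$ it fixes $[\alpha]$ modulo $H_1(S_g;\Z)$; pushing the endpoints of $\alpha$ gives $g_\gamma([\alpha])=[\alpha]+[\tilde\gamma_1]-[\tilde\gamma_2]$ (for suitable orientations). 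Because $s_*=-1$ on $H_1(S_g;\Z)$, we get $[\tilde\gamma_1]-[\tilde\gamma_2]=2[\tilde\gamma_1]$, so $g_\gamma\in\SI(S_g,P)$ iff $[\tilde\gamma_1]=0$ in $H_1(S_g;\Z)$. Thus $K=\ker h$, where $h\colon F_{2g+1}^{even}\to H_1(S_g;\Z)$ sends $\gamma$ to $[\tilde\gamma_1]$.

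It remains to show $\ker h=\ker\epsilon$. Reidemeister--Schreier with transversal $\{1,\zeta_1\}$ presents $F_{2g+1}^{even}$ freely on $a_i=\zeta_i\zeta_1^{-1}$ ($2\le i\le 2g+1$) and $b_i=\zeta_1\zeta_i$ ($1\le i\le 2g+1$), and $\epsilon(a_i)=e_i-e_1$, $\epsilon(b_i)=e_1-e_i$, so the image of $\epsilon$ is free of rank $2g$ with basis $\{e_i-e_1\}_{i\ge2}$. Define a homomorphism $\psi$ on this image by $\psi(e_i-e_1)=h(a_i)$. Writing $b_i=(\zeta_1a_i\zeta_1^{-1})\,\zeta_1^2$, using that $\zeta_1^2$ lifts to a loop encircling one branch point and hence is null-homologous in $S_g$, and that conjugation by the odd-length $\zeta_1$ carries the homology class of a lifted loop to its image under $s_*=-1$, one finds $h(b_i)=-h(a_i)=\psi(\epsilon(b_i))$; hence $h=\psi\circ\epsilon$ on all of $F_{2g+1}^{even}$. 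Under the identification $F_{2g+1}^{even}\cong\pi_1(S_g^\circ,p_1)$, the map $h$ is the Hurewicz homomorphism followed by the surjection $H_1(S_g^\circ;\Z)\to H_1(S_g;\Z)$, so $h$ is onto; therefore $\psi$ is a surjection between free abelian groups of rank $2g$, an isomorphism, and $\ker h=\ker\epsilon$. This proves exactness of $1\to\ker\epsilon\to\SI(S_g,P)\to\SI(S_g)$.

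Finally, for the splitting, let $N$ be an $s$-invariant regular neighborhood of $\alpha$; it is a disk containing $p_1$ and $p_2$, and $s$ restricts to a hyperelliptic involution on $S_g^1:=S_g\setminus\mathrm{int}\,N$. Extending by the identity on $N$ defines a homomorphism $j\colon\SI(S_g^1)\to\SI(S_g,P)$, and postcomposing with the forgetful map recovers the capping homomorphism $c\colon\SI(S_g^1)\to\SI(S_g)$. By Theorem~\ref{thm:uncapping} the isomorphism $\SI(S_g^1)\cong\SI(S_g)\times\Z$ carries $c$ to the projection onto the $\SI(S_g)$ factor (the $\Z$ factor being generated by the twist about $\partial S_g^1$, which $c$ annihilates); let $\sigma_0$ be the inclusion of that factor, and set $\sigma:=j\circ\sigma_0$. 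Then $(\text{forgetful})\circ\sigma=c\circ\sigma_0=\mathrm{id}$, so the forgetful map is surjective and $\sigma$ splits the sequence. The main obstacle is the middle paragraph: one must fix the Birman--Hilden dictionary (which downstairs loops lift, and whether they swap the points of $P$), establish the point-pushing formula for the action on $H_1(S_g,P;\Z)$ together with the input $s_*=-1$, and carry out the Reidemeister--Schreier bookkeeping that matches $\epsilon$ with the geometric homomorphism $h$; granted Theorems~\ref{thm:sbes2} and~\ref{thm:uncapping}, the splitting itself is formal.
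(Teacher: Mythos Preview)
Your proof is correct, and the splitting argument is essentially identical to the paper's. The identification of the kernel, however, is organized differently.

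The paper works entirely downstairs-to-upstairs via half-twists: it shows (Lemma~\ref{lem:action}) that the image of each $\zeta_i^{\pm1}$ in $\SBK(S_g,\{p_1,p_2\})$ is the half-twist about the arc $\beta_i$, and computes directly that this sends $[\beta_k]$ to $[\beta_k]-2[\beta_i]$ for every $k$; an induction then gives $\zeta_\star[\beta_k]=[\beta_k]+2\sum_j(-1)^j[\beta_{i_j}]$, which matches $\epsilon$ on the nose once one notes that the $[\beta_i]$ are independent in $H_1(S_g,\{p_1,p_2\};\Z)$. No parity split and no Reidemeister--Schreier are needed.

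You instead lift the Birman kernel to an honest two-point push on $S_g$, reduce membership in $\SI$ to the vanishing of a geometrically natural homomorphism $h\colon F_{2g+1}^{even}\to H_1(S_g;\Z)$ (``lift at $p_1$, take homology''), and then match $\ker h$ with $\ker\epsilon$ by producing a factorization $h=\psi\circ\epsilon$ on Reidemeister--Schreier generators and invoking a rank count to see that $\psi$ is an isomorphism. This is a genuinely different packaging: it makes transparent that the obstruction is the ordinary homology class of the lifted loop (with the factor of $2$ coming from $s_\ast=-1$), at the cost of the extra bookkeeping needed to reconcile that description with the specific map $\epsilon$. The paper's half-twist computation is shorter and treats odd and even words uniformly, but your approach explains \emph{why} the answer is the kernel of an abelianization-type map without ever drawing the local picture of a half-twist.
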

Again, the fact that the short exact sequence in Theorem~\ref{thm:algchar} is split is unexpected because there is no splitting induced by a map $S_g \to S_g-P$.

Since $\epsilon$ is a map from a nonabelian free group onto an infinite abelian group, its kernel is an infinitely generated free group.  We thus obtain the following.

\begin{cor}
\label{cor:surprise}
Let $g \geq 1$, and let $P$ be a pair of distinct points in $S_g$ interchanged by $s$.  We have
\[ \SI(S_g, P) \cong \SI(S_g) \ltimes F_\infty. \]
\end{cor}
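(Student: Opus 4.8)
The plan is to derive Corollary~\ref{cor:surprise} as a formal consequence of Theorem~\ref{thm:algchar}; all of the real content lies in that theorem, and what remains is to pass from a split short exact sequence to a semidirect product and to identify the kernel $\ker\epsilon$ up to isomorphism.

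First I would invoke the elementary fact that a split short exact sequence $1 \to N \to G \to Q \to 1$ gives an isomorphism $G \cong Q \ltimes N$, where $Q$ acts on $N$ by conjugation through a chosen section $Q \to G$. Applying this to the split exact sequence of Theorem~\ref{thm:algchar} yields
\[ \SI(S_g,P) \cong \SI(S_g) \ltimes \ker\epsilon. \]

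Next I would identify $\ker\epsilon$ with $F_\infty$. As a subgroup of the free group $F_{2g+1}$, the group $\ker\epsilon$ is free by the Nielsen--Schreier theorem, so it suffices to check that it is not finitely generated. The subgroup $F_{2g+1}^{even}$ has index $2$ in $F_{2g+1}$, so by the Nielsen--Schreier index formula it is free of rank $2(2g+1-1)+1 = 4g+1 \geq 5$ (using $g \geq 1$); in particular it is nonabelian. The homomorphism $\epsilon$ sends $\zeta_1\zeta_2$ to $e_1 - e_2$, an element of infinite order, so the image of $\epsilon$ is infinite and hence $\ker\epsilon$ has infinite index in $F_{2g+1}^{even}$. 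A standard property of free groups is that a nontrivial normal subgroup of a free group is finitely generated only when it has finite index; therefore $\ker\epsilon$ is infinitely generated, and being free it is isomorphic to $F_\infty$. Substituting into the displayed isomorphism above gives the corollary.

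I do not expect any genuine obstacle: the argument is purely formal once Theorem~\ref{thm:algchar} is available. The only point that warrants a line of justification is that $\ker\epsilon$ really has infinite index in $F_{2g+1}^{even}$, i.e.\ that the image of $\epsilon$ is infinite, which is immediate from the explicit formula for $\epsilon$.
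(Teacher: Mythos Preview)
Your proposal is correct and follows essentially the same route as the paper: the sentence preceding the corollary in the paper reads ``Since $\epsilon$ is a map from a nonabelian free group onto an infinite abelian group, its kernel is an infinitely generated free group,'' and you have simply unpacked this (verifying via the Nielsen--Schreier index formula that $F_{2g+1}^{even}$ is nonabelian and that $\epsilon$ has infinite image), together with the standard passage from a split extension to a semidirect product.
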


\p{Application to generating sets} A simple closed curve in $S_g$ is \emph{symmetric} if it is fixed by the hyperelliptic involution $s$.  We prove in Theorem~\ref{thm:topchar} that the image of each element of $\ker \epsilon$ in $\SI(S_g,P)$ is a product of Dehn twists about symmetric separating curves.

Hain has conjectured that the entire group $\SI(S_g)$ is in fact generated by Dehn twists about symmetric separating curves \cite[Conjecture 1]{hain}; see also Morifuji \cite[Section 4]{morifuji}.  Hain's conjecture is well-known to be true for $g=2$; see Theorem~\ref{thm:g21} below.  Using Theorem~\ref{thm:topchar}, we prove the following theorem in Section~\ref{sec:app}.  In the statement, an element of $\Mod(S_g)$ is \emph{reducible} if it fixes a collection of isotopy classes of pairwise disjoint simple closed curves in $S_g$.

\begin{thm}
\label{thm:reducibles}
Let $g \geq 0$.  Suppose that $\SI(S_k)$ is generated by reducible elements for all $k$ between 0 and $g$, inclusive.
Then the group $\SI(S_g)$ is generated by Dehn twists about symmetric separating curves.
\end{thm}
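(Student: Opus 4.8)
The plan is to argue by induction on the genus $g$. There is nothing to prove when $g\le 1$, since $\SI(S_0)$ and $\SI(S_1)$ are trivial, and the case $g=2$ is exactly Theorem~\ref{thm:g21}. So fix $g\ge 3$ and assume the theorem in all smaller genera. Combining the inductive hypothesis with Theorem~\ref{thm:sibes1}, Theorem~\ref{thm:uncapping}, and Theorems~\ref{thm:algchar} and~\ref{thm:topchar}, one gets that each of the groups $\SI(S_k)$, $\SI(S_k^1)$, $\SI(S_k,Q)$ with $Q$ a point fixed by $s$, and $\SI(S_k,Q)$ with $Q$ a pair of points interchanged by $s$, is generated by Dehn twists about symmetric separating curves whenever $k<g$ (for the last case one uses the splitting of Theorem~\ref{thm:algchar} to write the group as generated by $\ker\epsilon$, which maps to products of symmetric separating twists by Theorem~\ref{thm:topchar}, together with a lift of $\SI(S_k)$, realized by symmetric separating twists disjoint from $Q$). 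Since $\SI(S_g)$ is generated by reducible elements by hypothesis, it suffices to show that every reducible $f\in\SI(S_g)$ is a product of Dehn twists about symmetric separating curves.

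Let $f$ be such an element, which we may assume nontrivial. Its canonical reduction system $\Gamma$ is nonempty, and since $\Gamma$ is preserved by the centralizer of $f$ and $s$ commutes with $f$, the multicurve $\Gamma$ is $s$--invariant; we realize it $s$--equivariantly. A routine argument, controlling how $f$ permutes the components of $\Gamma$ by the fact that $f$ acts trivially on $H_1(S_g)$ (and, if necessary, passing to orbits of components), reduces us to the case in which $f$ fixes the isotopy class of a component $c$ of $\Gamma$. Now split into cases according to the behaviour of $c$ under $s$, in each case cutting $S_g$ along $c$ (and $s(c)$), capping the resulting boundary circles with once--marked disks, and recognizing the restriction of $f$ inside the hyperelliptic Torelli group of a lower--genus surface. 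If $s(c)=c$ and $c$ is separating, then $s$ preserves each of the two complementary subsurfaces (a symmetric separating curve cannot have $s$ interchange its two sides, since then $s|_c$ would be an orientation--reversing involution and $c$ would be the double of an arc between two Weierstrass points, hence nonseparating), and $f$ preserves each as well, as interchanging them would force a nontrivial action on $H_1(S_g)$. Capping yields two closed surfaces of genus $<g$, each with a marked point fixed by $s$; by Theorem~\ref{thm:sibes1} and induction the two restrictions of $f$ are products of symmetric separating twists, whose curves are again symmetric and separating in $S_g$, and $f$ differs from their product only by a power of the symmetric separating twist $T_c$. If instead $s(c)=c$ and $c$ is nonseparating, then $s$ interchanges the two sides of $c$, and cutting along $c$ and capping with two disks whose centers form a pair $P$ interchanged by $s$ produces a closed surface of genus $g-1$ on which $s$ restricts to the hyperelliptic involution; $f$ restricts to an element of $\SI(S_{g-1},P)$, which by Theorems~\ref{thm:algchar} and~\ref{thm:topchar} and induction is a product of symmetric separating twists, their curves remaining symmetric and separating in $S_g$. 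Here $f$ differs from their product by a power of $T_c$, but $c$ is nonseparating, so $T_c\notin\I(S_g)$; since $f$ and that product both lie in $\I(S_g)$, the power is trivial.

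Finally, if $s(c)\ne c$, then $c$ is nonseparating and $\{c,s(c)\}$ is a symmetric bounding pair (it cobounds the preimage of the disk in $S_g/s$ bounded by the common image of $c$ and $s(c)$). Cutting along both curves separates $S_g$ into two subsurfaces of genus $<g$, each preserved by $s$ and each having its two boundary circles interchanged by $s$; capping as before produces two closed surfaces carrying pairs of points interchanged by $s$. By Theorems~\ref{thm:algchar} and~\ref{thm:topchar} and induction, the two restrictions of $f$ are products of symmetric separating twists, and their curves are symmetric and separating in $S_g$ because a symmetric separating curve never separates a pair of points interchanged by $s$ (it has $s$ preserving its two sides, so both points of the pair lie on the same side), so regluing keeps one complementary side connected. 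Now $f$ differs from their product by a power of the bounding pair map $T_cT_{s(c)}^{-1}$; since $s$ conjugates this element to its inverse, it commutes with $s$ only if it is trivial, and since both $f$ and that product commute with $s$, the power is trivial. In every case $f$ is written as a product of Dehn twists about symmetric separating curves, completing the induction.

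I expect the main obstacle to be the two nonseparating cases, especially the last one: there the reduction curve is not itself a symmetric separating curve, so cutting and capping necessarily introduces an extra transvection or bounding pair map, and the argument hinges on recognizing that such an element is incompatible either with lying in $\I(S_g)$ or with commuting with $s$, and so must vanish. The supporting technical points — making the cut--and--cap constructions precise, verifying that the restrictions of $f$ genuinely lie in the hyperelliptic Torelli groups of the capped pieces relative to their new marked points, checking that symmetric separating curves survive regluing, and handling the way $f$ permutes components of its reduction system — are exactly what Theorems~\ref{thm:sibes1}, \ref{thm:algchar}, \ref{thm:topchar}, and~\ref{thm:uncapping} are there to supply.
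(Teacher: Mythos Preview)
Your argument is correct and follows the same overall strategy as the paper's: induction on genus, reduce to an $s$-compatible curve stabilizer, cut and cap, apply Theorems~\ref{thm:sibes1}, \ref{thm:algchar}, and~\ref{thm:topchar} on the lower-genus pieces, and then analyze the kernel of the cutting map.

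The only substantive difference is in how the $s$-compatible reduction curve is produced.  The paper first uses Ivanov's theorem (Theorem~\ref{thm:pure}) to get \emph{some} fixed curve $a$, and then Proposition~\ref{prop:reduction} upgrades this to a symmetric or pre-symmetric curve by passing to components of the boundary of a regular neighborhood of $\alpha\cup s(\alpha)$.  You instead observe that the canonical reduction system is preserved by the centralizer of $f$, hence by $\sigma$, and so is already $s$-invariant; this lets you skip Proposition~\ref{prop:reduction} entirely at the cost of invoking a slightly heavier piece of Nielsen--Thurston theory.  Your terse justification that a pre-symmetric curve must be nonseparating (via the quotient sphere) is exactly what makes this work, and it is worth noting that it depends on the preimage of each complementary disk being connected, which holds because each disk contains at least one branch point.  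Your kernel analysis in the pre-symmetric case is also packaged differently from the paper's: you use the Torelli constraint first to force $fP^{-1}=(T_cT_{s(c)}^{-1})^m$ and then the $\SMod$ constraint to force $m=0$, whereas the paper computes the kernel inside $\SMod$ as $\langle T_aT_{\sigma(a)}\rangle$ and then shows in Lemma~\ref{lem:image of pk} (using Lemma~\ref{lem:other paper}) that it meets $\SI$ trivially.  The two arguments are equivalent.
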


In other words, by the results of this paper, Hain's conjecture is reduced to showing that $\SI(S_g)$ is generated by reducible elements.

Also, we prove in Theorem~\ref{thm:g21} that the hyperelliptic Torelli group for a surface of genus two with one marked point or one boundary component is generated by Dehn twists about symmetric separating curves.

\p{Application to the Burau representation} In the proof of Theorem~\ref{thm:uncapping}, we will explain how to identify $\SI(S_g^1)$ with a subgroup of the pure braid group $PB_{2g+1}$.  It is then easy to check that $\SI(S_g^1)$ is isomorphic to the kernel $K_{2g+1}$ of the reduced Burau representation of $PB_{2g+1}$ evaluated at $t=-1$; see \cite[Remark 4.3]{perron}, \cite{mp}, and \cite{companion}.  Using the fact that $\SI(S_g^1)$ splits over its center (Theorem~\ref{thm:uncapping}), it follows that $K_{2g+1}$ also splits as a direct product over its center $Z(K_{2g+1})$ and $K_{2g+1}/Z(K_{2g+1}) \cong \SI(S_g)$.

We can also use the Birman exact sequence in Theorem~\ref{thm:algchar} to relate $K_{2g+2}$ to $K_{2g+1}$.  Analogously to the case of odd index braid groups, we have $\SI(S_g,P) \cong K_{2g+2}/Z(K_{2g+2})$.  In the even degree case, we also have $Z(K_{2g+2}) = Z(B_{2g+2})$, where $B_{2g+2}$ is the braid group on $2g+2$ strands.  Thus, by Corollary~\ref{cor:surprise}, we have
\[ K_{2g+2}/Z(B_{2g+2}) \cong (K_{2g+1}/Z(K_{2g+1})) \ltimes F_{\infty}. \]

\p{Acknowledgments} We would also like to thank Joan Birman, Kai-Uwe Bux, Tom Church, Benson Farb, Dick Hain, Chris Leininger, and Andy Putman for helpful discussions.


\section{Hyperelliptic mapping class groups, hyperelliptic Torelli groups, and the Birman--Hilden theorem}
\label{sec:bg}

We recall some basic information about hyperelliptic mapping class groups, including the fundamental theorem of Birman--Hilden.  Theorems~\ref{thm:bh}, \ref{thm:bh marked}, and~\ref{thm:bh low genus} below are all special cases of their general theorem \cite[Theorem 1]{birmanhilden}.

\p{Mapping class groups} Let $S$ be a compact, connected, orientable surface with finitely many marked points in its interior.  The \emph{mapping class group} $\Mod(S)$ is the group of homotopy classes of homeomorphisms of $S$, where all homeomorphisms and homotopies are required to fix the marked points as a set and to fix $\partial S$ pointwise.  

\p{Hyperelliptic involutions and mapping class groups} A hyperelliptic involution is an order two homeomorphism of $S_g$ that acts by $-I$ on $H_1(S_g;\Z)$.  We fix some hyperelliptic involution $s$ of $S_g$, once and for all.  Its mapping class, which we also call a hyperelliptic involution, will be denoted $\sigma$.  The mapping class $\sigma$ is unique up to conjugacy.  

\begin{figure}[htb]
\psfrag{...}{$\dots$}
\centerline{\includegraphics[scale=.5]{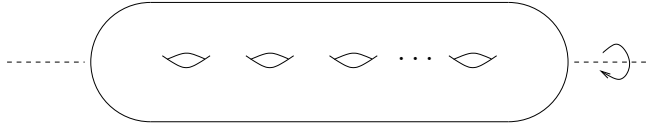}}
\caption{Rotation by $\pi$ about the indicated axis is a hyperelliptic involution.}
\label{figure:hi}
\end{figure}

The \emph{hyperelliptic mapping class group}, or \emph{hyperelliptic mapping class group}, is the group $\SMod(S_g)$ of isotopy classes of orientation-preserving homeomorphisms of $S_g$ that commute with $s$ (isotopies are not required to be $s$-equivariant).  The \emph{hyperelliptic Torelli group} of $S_g$ is the group
\[ \SI(S_g) = \I(S_g) \cap \SMod(S_g). \]

Suppose that $P$ is a set of marked points in $S_g$, and denote the marked surface by $(S_g,P)$.  The hyperelliptic mapping class group $\SMod(S_g,P)$ is the subgroup of $\Mod(S_g,P)$ consisting of elements represented by homeomorphisms that commute with $s$.  The hyperelliptic Torelli group $\SI(S_g,P)$ is the subgroup of $\SMod(S_g,P)$ consisting of elements that act trivially on the relative homology $H_1(S_g,P;\Z)$.

\p{Hyperelliptic mapping class groups in low genus} Let $g \in \{0,1,2\}$.  The group $\Mod(S_g)$ has a generating set consisting of Dehn twists about symmetric simple closed curves.  Each such Dehn twist has a representative that commutes with $s$, and so we obtain the following.

\begin{fact}
\label{fact:low genus}
For $g \in \{0,1,2\}$, we have
\[ \SMod(S_g) = \Mod(S_g). \]
\end{fact}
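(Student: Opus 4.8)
The plan is to verify the claim genus by genus, in each case producing for every element of $\Mod(S_g)$ a representative homeomorphism that commutes with the chosen hyperelliptic involution $s$, which puts that element in $\SMod(S_g)$; since $\SMod(S_g) \subseteq \Mod(S_g)$ always, this gives equality. For $g = 0$ there is nothing to prove: every orientation-preserving homeomorphism of $S_0$ is isotopic to the identity, so $\Mod(S_0)$ is trivial and equals $\SMod(S_0)$. For $g = 1$ and $g = 2$ the strategy is the same: fix a standard generating set of $\Mod(S_g)$ consisting of Dehn twists about simple closed curves, choose the generating curves to be symmetric (invariant under $s$), and observe that a Dehn twist about a symmetric curve has a representative supported in an $s$-invariant annular neighborhood of that curve, on which one can take the twist map itself to commute with $s$.

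The key steps, in order, are as follows. First, for $g = 1$, recall that $\Mod(S_1) \cong SL_2(\Z)$ is generated by the two Dehn twists about the standard meridian and longitude $a$ and $b$, which meet in one point; arrange $s$ (rotation by $\pi$ as in Figure~\ref{figure:hi}) so that both $a$ and $b$ are symmetric. Second, for $g = 2$, invoke a standard chain generating set: $\Mod(S_2)$ is generated by the Dehn twists about the five curves $c_1, \dots, c_5$ of a chain, and with $s$ positioned as in Figure~\ref{figure:hi} each $c_i$ is symmetric (this is the classical picture in which the genus-two surface is a double cover of the sphere branched over six points, and $s$ is the deck transformation). Third, and this is the crucial local lemma, show that if $c$ is a symmetric simple closed curve then the Dehn twist $T_c$ lies in $\SMod(S_g)$: take a regular neighborhood $N$ of $c$; by averaging or by choosing $N$ equivariantly we may assume $s(N) = N$; then $s$ acts on the annulus $N$ either preserving or reversing its core orientation, and in either case $T_c$ and $s|_N$ commute up to isotopy supported in $N$ (when $s$ reverses the core, $s T_c s^{-1} = T_{s(c)}^{-1} = T_c^{-1}$ at first glance, but $s$ also reverses the coorientation of $c$, so the two sign changes cancel and $s T_c s^{-1} = T_c$). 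Combining, each generator of $\Mod(S_g)$ lies in $\SMod(S_g)$, hence so does all of $\Mod(S_g)$.

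The main obstacle is the third step: verifying carefully that $T_c \in \SMod(S_g)$ for a symmetric curve $c$, because the naive conjugation formula $s T_c s^{-1} = T_{s(c)}^{\pm 1}$ requires tracking both the image curve and the orientation with which $s$ acts on a neighborhood of $c$. The cleanest way to handle this is to work with an explicit $s$-equivariant annular neighborhood and an explicit twist formula supported there, checking directly that the composition with $s$ agrees; alternatively one can cite the standard fact (used implicitly throughout the subject) that a Dehn twist about an $s$-symmetric curve has a representative commuting with $s$. Once that local statement is in hand, the genus-$0$, $1$, and $2$ cases follow immediately from the existence of symmetric generating sets for $\Mod(S_g)$, which are classical.
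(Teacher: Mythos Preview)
Your proposal is correct and follows essentially the same approach as the paper: the paper's justification (given in the paragraph immediately preceding the Fact) is precisely that $\Mod(S_g)$ for $g\le 2$ is generated by Dehn twists about symmetric simple closed curves, each of which has a representative commuting with $s$. Your write-up simply expands on the details the paper leaves implicit, in particular the local verification that a Dehn twist about a symmetric curve has an $s$-equivariant representative.
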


\p{The Birman--Hilden theorem} For $g \geq 3$, the group $\SMod(S_g)$ has infinite index in $\Mod(S_g)$.  Indeed, if $a$ is any isotopy class of simple closed curves in $S_g$ that is not fixed by $\sigma$, then no nontrivial power of the Dehn twist $T_a$ is an element of $\SMod(S_g)$ (note that, by Fact~\ref{fact:low genus} no such curves exist in a closed genus two surface!).  However, there is a very useful description of $\SMod(S_g)$ given by Birman--Hilden, which we now explain.

The quotient of $S_g$ by $s$ is a sphere with $2g+2$ marked points, namely the images of the fixed points of $s$.  We denote this surface by $S_{0,2g+2}$.  Any homeomorphism of $S_g$ that commutes with $s$ necessarily fixes the fixed points of $s$, and hence descends to a homeomorphism of $S_{0,2g+2}$ preserving the marked points.  

By definition, any element $f$ of $\SMod(S_g)$ has a representative $\phi$ that commutes with $s$.  Thus, there is a map
\[ \theta : \SMod(S_g) \to \Mod(S_{0,2g+2}). \]
The following is a special case of a theorem of Birman--Hilden.

\begin{thm}[Birman--Hilden]
\label{thm:bh}
For $g \geq 2$, the map $\theta : \SMod(S_g) \to \Mod(S_{0,2g+2})$ is a well-defined, surjective homomorphism with kernel $\langle \sigma \rangle$.  In particular, $\SMod(S_g)/\langle \sigma \rangle \cong \Mod(S_{0,2g+2})$.
\end{thm}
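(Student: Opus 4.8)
The plan is to derive the statement from Birman and Hilden's general theorem \cite[Theorem 1]{birmanhilden} by verifying its hypotheses for the hyperelliptic cover. View $S_{0,2g+2}$ as the $2$-sphere with the $2g+2$ images of $\mathrm{Fix}(s)$ as marked (branch) points, so that $p : S_g \to S_{0,2g+2}$ is a branched double cover with deck group $\{1,s\}$; away from the branch points and their preimages this is an honest regular double cover, and since $g \geq 2$ the surface $S_g$ carries a hyperbolic metric for which $s$ acts isometrically. I would organize the proof around three ingredients: \emph{(a)} the Birman--Hilden property of $p$, that any two fiber-preserving homeomorphisms of $S_g$ which are isotopic are in fact isotopic through fiber-preserving homeomorphisms; \emph{(b)} the liftability statement that every homeomorphism of $S_{0,2g+2}$ permuting its marked points lifts to an $s$-equivariant homeomorphism of $S_g$; and \emph{(c)} the identification of the deck group with $\langle\sigma\rangle$.

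First, any $s$-equivariant homeomorphism $\phi$ of $S_g$ fixes $\mathrm{Fix}(s)$ and hence descends to a homeomorphism $\bar\phi$ of $S_{0,2g+2}$ preserving the marked points, with $\overline{\phi\psi}=\bar\phi\,\bar\psi$; so, once one knows that $[\bar\phi]$ depends only on $f=[\phi]$, the map $\theta(f):=[\bar\phi]$ is a homomorphism. That independence reduces, by passing to a product $\phi_0\phi_1^{-1}$ of two $s$-equivariant representatives of $f$, to showing that an $s$-equivariant $\phi$ with $\phi\simeq\mathrm{id}$ in $\Mod(S_g)$ has $\bar\phi\simeq\mathrm{id}$ rel marked points; this is precisely ingredient \emph{(a)} (a free isotopy from $\phi$ to $\mathrm{id}$ need not descend, but a fiber-preserving one does).

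For surjectivity I would invoke \emph{(b)}: the unbranched double cover corresponds to the kernel $H$ of the homomorphism $\pi_1(S_{0,2g+2})\to\Z/2$ sending each loop around a marked point to $1$, and $H$ is preserved by the (outer) automorphism induced by any marked-point-permuting homeomorphism, since such an automorphism permutes the punctural generators only up to conjugacy and inversion, both invisible in $\Z/2$; hence every such homeomorphism lifts, and the lift, normalizing the order-two deck group, commutes with $s$ and extends over the branch points. Concretely, a half-twist interchanging two consecutive marked points is covered by the Dehn twist about the symmetric curve lying over an arc joining them, and these half-twists generate $\Mod(S_{0,2g+2})$. For the kernel, $s$ plainly descends to the identity, so $\langle\sigma\rangle\subseteq\ker\theta$; conversely, if $\bar\phi\simeq\mathrm{id}$ rel marked points, then lifting this isotopy via the homotopy lifting property on the complement of the branch points (which are fixed throughout) produces an isotopy from $\phi$ to a lift of $\mathrm{id}$, i.e. to $\mathrm{id}$ or to $s$, so $f\in\langle\sigma\rangle$. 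Finally $\sigma\neq 1$ since $\sigma$ acts by $-I\neq I$ on $H_1(S_g;\Z)$; hence $\ker\theta=\langle\sigma\rangle\cong\Z/2$ and the isomorphism $\SMod(S_g)/\langle\sigma\rangle\cong\Mod(S_{0,2g+2})$ follows.

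The real obstacle is ingredient \emph{(a)}, the technical core of the Birman--Hilden theorem and the only place where the hypothesis $g\geq 2$ is genuinely used, via the hyperbolicity of $S_g$. Rather than reprove it I would cite \cite[Theorem 1]{birmanhilden}: the argument there uses a hyperbolic metric on $S_g$ with respect to which $s$ is an isometry, together with a straightening and uniqueness argument (in modern language, the Alexander method) to upgrade a free isotopy to a fiber-preserving one, after checking that the hyperelliptic cover of a surface of genus $g\geq 2$ is not among the small-genus exceptions to that property.
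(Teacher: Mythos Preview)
Your sketch is correct, and in fact it goes further than the paper does: the paper offers no proof of this theorem at all. It is stated with the attribution ``Birman--Hilden'' and introduced as ``a special case of a theorem of Birman--Hilden,'' with the citation \cite[Theorem 1]{birmanhilden} given once for Theorems~\ref{thm:bh}, \ref{thm:bh marked}, and~\ref{thm:bh low genus} together. So there is nothing to compare against on the paper's side beyond that citation.

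Your outline is the standard way to extract this particular statement from the general Birman--Hilden machinery, and the pieces are assembled correctly: the reduction of well-definedness to ingredient \emph{(a)}; the lifting criterion via the characteristic index-two subgroup $H$ (and the observation that normalizing an order-two deck group is the same as centralizing it, so lifts are automatically $s$-equivariant); and the kernel computation by lifting an isotopy on the quotient. Your decision to quote \cite[Theorem 1]{birmanhilden} for ingredient \emph{(a)} rather than reprove it is exactly the choice the paper makes, just applied at a finer level of granularity. One small remark: you could also note, as the paper implicitly does by grouping the three theorems, that the same citation handles the marked and low-genus variants with only cosmetic changes to the setup.
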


\p{A Birman--Hilden theorem for surfaces with marked points} Let $p_1,p_2 \in S_g$ be distinct points that are interchanged by $s$.  The quotient $(S_g,\{p_1,p_2\})/\langle s \rangle$ is the pair $(S_{0,2g+2},p)$, where $p \in S_{0,2g+2}$ is the image of $p_1 \cup p_2$.  
Elements of $\Mod(S_{0,2g+2},p)$ can permute the $2g+2$ marked points coming from $S_{0,2g+2}$, but must preserve the marked point $p$.  As before there is a map $\theta : \SMod(S_g,\{p_1,p_2\}) \to \Mod(S_{0,2g+2},p)$.  We have the following analogue of Theorem~\ref{thm:bh}.

\begin{thm}[Birman--Hilden]
\label{thm:bh marked}
For $g \geq 1$, the homomorphism $\theta : \SMod(S_g,\{p_1,p_2\}) \to \Mod(S_{0,2g+2},p)$ is a well-defined, surjective homomorphism with kernel $\langle \sigma \rangle$.  In particular, $\SMod(S_g,\{p_1,p_2\})/\langle \sigma \rangle \cong \Mod(S_{0,2g+2},p)$.
\end{thm}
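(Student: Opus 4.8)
The plan is to derive Theorem~\ref{thm:bh marked} as the instance of the general Birman--Hilden correspondence \cite[Theorem 1]{birmanhilden} attached to the branched covering
\[ (S_g,\{p_1,p_2\}) \longrightarrow (S_{0,2g+2},p), \]
i.e.\ the hyperelliptic cover $S_g \to S_{0,2g+2}$, branched over the $2g+2$ images of the fixed points of $s$, with the \emph{unbranched} marked point $p$ adjoined downstairs and its fiber $\{p_1,p_2\}$ adjoined upstairs. The first thing I would point out is that the cover, punctured at $p_1$ and $p_2$, has Euler characteristic $2-2g-2 = -2g<0$ for all $g\geq1$, so it carries a complete finite-area hyperbolic metric; hyperbolicity of the cover is precisely the hypothesis under which Birman--Hilden's fiber-preserving isotopy theorem applies, and this is what makes the range $g\geq1$ available here even though the closed statement (Theorem~\ref{thm:bh}) needs $g\geq2$: the closed torus is not hyperbolic, but the twice-marked torus is. Granting the Birman--Hilden machinery, the proof then reduces to three checks.

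For \emph{well-definedness and the homomorphism property}: a class $f\in\SMod(S_g,\{p_1,p_2\})$ has a representative $\phi$ commuting with $s$, and since $s$ preserves the branch locus and interchanges $p_1$ and $p_2$, such a $\phi$ descends to a homeomorphism $\bar\phi$ of $S_{0,2g+2}$ fixing the branch points setwise and fixing $p$; I would set $\theta(f)=[\bar\phi]$ and check independence of the representative by noting that if $\phi'$ is another such representative then $\phi'\phi^{-1}$ commutes with $s$ and is isotopic rel $\{p_1,p_2\}$ to the identity, hence --- by the fiber-preserving isotopy theorem --- isotopic to the identity through homeomorphisms commuting with $s$, an isotopy which descends. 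Compatibility with composition is then formal.

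For \emph{surjectivity}: it is enough to show every element of $\Mod(S_{0,2g+2},p)$ lifts to $S_g$, because any lift automatically commutes with the deck transformation $s$ and preserves $\{p_1,p_2\}$ (the fiber over $p$), hence represents a class in $\SMod(S_g,\{p_1,p_2\})$. To see liftability I would describe the cover $S_g\setminus\mathrm{Fix}(s)\to S_{0,2g+2}\setminus\{\text{branch points}\}$, further restricted over the complement of $p$, as the double cover classified by the homomorphism from $\pi_1$ of the resulting $(2g+3)$-punctured sphere to $\Z/2$ that sends each of the $2g+2$ loops around branch points to the nontrivial element and the loop around $p$ to $0$. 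Since a homomorphism from the fundamental group of a punctured sphere to $\Z/2$ is determined by its values on the loops around the punctures, this is the unique such homomorphism, so any homeomorphism representing an element of $\Mod(S_{0,2g+2},p)$ --- which permutes the branch points and fixes $p$ --- preserves it and therefore lifts. Thus $\theta$ is onto.

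Finally, for \emph{the kernel}: given $f$ with $\theta(f)=1$, I would choose a representative $\phi$ commuting with $s$ whose descent is isotopic rel branch points and $p$ to the identity, and lift that isotopy starting from $\phi$ to conclude that $\phi$ is isotopic, through homeomorphisms commuting with $s$ and preserving $\{p_1,p_2\}$, to a deck transformation, so that $\ker\theta\subseteq\langle\sigma\rangle$; conversely $s$ descends to the identity fixing $p$, so $\sigma\in\ker\theta$, and $\sigma\neq1$ since $s$ acts as $-I\neq I$ on $H_1(S_g;\Z)$ for $g\geq1$. The hard part is, as always, the fiber-preserving isotopy theorem used in the first and last checks; it is the technical core of \cite{birmanhilden} and is exactly what pins such statements to hyperbolic covers, so I would invoke it rather than reprove it, the only genuinely new observation being the Euler-characteristic count that keeps the punctured cover hyperbolic for every $g\geq1$. (An alternative for $g\geq2$ is to fit $\theta$ into the commutative ladder of Birman exact sequences lying over the homomorphism of Theorem~\ref{thm:bh} and identify the induced map on point-pushing kernels with the evident isomorphism between the fundamental group of the space of symmetric pairs in $S_g$ and $\pi_1\big(S_{0,2g+2}\setminus\{\text{branch points}\},p\big)$; but establishing exactness of the top row and handling $g=1$ still requires the same hyperbolicity input, so this is not really a shortcut.)
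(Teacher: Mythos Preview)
Your proposal is correct and follows the same approach as the paper: the paper states at the outset of Section~\ref{sec:bg} that Theorems~\ref{thm:bh}, \ref{thm:bh marked}, and~\ref{thm:bh low genus} are all special cases of \cite[Theorem 1]{birmanhilden} and gives no further argument for Theorem~\ref{thm:bh marked}. You simply unpack that citation, carrying out the three routine checks (well-definedness, surjectivity via the lifting criterion, and identification of the kernel with $\langle\sigma\rangle$) and noting explicitly the Euler-characteristic reason why the marked version holds already for $g=1$; this last observation is a useful gloss that the paper leaves implicit.
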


\p{The Birman--Hilden theorem in low genus} Theorem~\ref{thm:bh} does not hold as stated for $g \in \{0,1\}$.  In fact, in these cases, the map $\theta$ is not even well defined.  This is because there are nontrivial finite order homeomorphisms of $S_{0,2g+2}$ that lift to homeomorphisms of $S_g$ that are homotopic to the identity.  Therefore, we are forced to redefine $\theta$ in these cases.  Let $p$ be some particular fixed point of ${s}$.  We have
\[ \SMod(S^2,p) = \Mod(S^2,p) = \Mod(S^2) = \SMod(S_2) = 1 \] and
\[ \SMod(T^2,p) = \Mod(T^2,p) \cong \Mod(T^2) = \SMod(T^2) \cong \textrm{SL}(2,\Z). \]
 Therefore, for $g \in \{0,1\}$, we can define $\theta$ via the composition
\[ \SMod(S_g) \stackrel{\cong}{\to} \SMod(S_g,p) \to \Mod(S_{0,2g+2}). \]
For $g=0$, this map $\theta$ is the trivial map $\Mod(S^2) \to \Mod(S_{0,2}) \cong \Z/2\Z$.

\begin{thm}[Birman--Hilden]
\label{thm:bh low genus}
The map $\theta : \SMod(T^2) \to \Mod(S_{0,4})$ is a well-defined homomorphism with kernel $\langle \sigma \rangle$.  Its image is the subgroup of $\Mod(S_{0,4})$ consisting of elements that fix the marked point corresponding to $p \in T^2$.
\end{thm}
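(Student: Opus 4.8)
Here is how I would approach Theorem~\ref{thm:bh low genus}.

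The plan is to treat this as the genus-one instance of Birman--Hilden's theorem \cite[Theorem 1]{birmanhilden} for the branched double cover $T^2\to S^2=T^2/\langle s\rangle$, with the marked point $p$ inserted precisely to repair the failure of the unmarked statement. The relevant failure, as the text notes, is the half-translation $z\mapsto z+\tfrac12$ of $\R^2/\Z^2$: it commutes with $s$ and is isotopic to the identity on $T^2$, yet it interchanges two pairs of fixed points of $s$, so it descends to a nontrivial element of $\Mod(S_{0,4})$. The point of demanding $\phi(p)=p$ is the following rigidity statement, which I would establish first: if $\phi$ is an orientation-preserving homeomorphism of $T^2$ that commutes with $s$, fixes $p$, and is isotopic to the identity rel $p$, then $\phi$ fixes the fixed point set $\mathrm{Fix}(s)$ pointwise. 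Indeed, lift $\phi$ to $\tilde\phi\colon\R^2\to\R^2$ with $\tilde\phi(0)=0$; since $\phi_*$ is the identity on $\pi_1(T^2,p)$ one has $\tilde\phi(z+w)=\tilde\phi(z)+w$ for $w\in\Z^2$, and since $\phi$ commutes with $s$ and $\tilde\phi(0)=0$ one has $\tilde\phi(-z)=-\tilde\phi(z)$; combining these forces $\tilde\phi(v)=v$ for every $v$ in the lattice $\tfrac12\Z^2$. Consequently $\bar\phi$ is a pure mapping class of $S_{0,4}$, and I would then invoke \cite[Theorem 1]{birmanhilden} for the resulting \emph{unbranched} double cover $T^2\setminus\mathrm{Fix}(s)\to S_{0,4}$ of a hyperbolic surface, which is now admissible, to conclude that $\bar\phi$ is isotopic to the identity. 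This shows that the composition $\SMod(T^2)\cong\SMod(T^2,p)\to\Mod(S_{0,4})$ defining $\theta$ is a well-defined homomorphism, whose image is visibly contained in the subgroup $\Stab(\bar p)$ of $\Mod(S_{0,4})$ fixing the branch point $\bar p$ that is the image of $p$.

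To identify the image and kernel I would use the isomorphism $\SMod(T^2,p)=\Mod(T^2,p)\cong\Mod(T^2)\cong\mathrm{SL}(2,\Z)$ recorded in the text. Choose symmetric simple closed curves $a,b$ in $T^2$ that avoid $p$ and satisfy $i(a,b)=1$, so that $T_a,T_b$ generate $\Mod(T^2)$, and realize $T_a,T_b$ by $s$-equivariant Dehn twists supported in $s$-invariant annuli missing $p$. Each of $a,b$ meets $\mathrm{Fix}(s)$ in exactly two points, so these twists descend to half-twists $h_{\bar a},h_{\bar b}$ along arcs $\bar a,\bar b\subset S_{0,4}$ joining the corresponding branch points, and $\bar a,\bar b$ meet in the single point coming from $a\cap b$. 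Hence $h_{\bar a}h_{\bar b}h_{\bar a}=h_{\bar b}h_{\bar a}h_{\bar b}$, and since a regular neighborhood $D$ of $\bar a\cup\bar b$ is a disk containing three of the four branch points, its boundary bounds a disk containing only $\bar p$, giving $(h_{\bar a}h_{\bar b}h_{\bar a})^2=T_{\partial D}=1$. Thus $\theta$ factors through $\B_3/Z(\B_3)\cong\mathrm{PSL}(2,\Z)$; since the image of $\theta$ contains $\langle h_{\bar a}^2,h_{\bar b}^2\rangle$ — a free group of rank two on two Dehn twists about filling curves, which one checks is all of $\PMod(S_{0,4})\cong F_2$ — and maps onto the symmetric group $S_3$ permuting the three branch points other than $\bar p$, it equals $\Stab(\bar p)$ and is isomorphic to $\mathrm{PSL}(2,\Z)$. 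Since $\sigma=-I$ maps to $1$, the surjection $\mathrm{SL}(2,\Z)\to\Stab(\bar p)\cong\mathrm{PSL}(2,\Z)$ kills $\langle\sigma\rangle$, and as $\mathrm{PSL}(2,\Z)$ is Hopfian this is the entire kernel; alternatively $\ker\theta=\langle\sigma\rangle$ follows directly from the rigidity statement together with $\Mod(T^2,p)\cong\Mod(T^2)$.

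The crux — and the only genuinely delicate step — is the rigidity/well-definedness statement, that is, the Birman--Hilden phenomenon itself. Unlike for $g\ge 2$, it is false here without the marked point: there are finite-order homeomorphisms of $S_{0,4}$ whose lifts to $T^2$ are homotopic to the identity. The whole weight of the argument is that once one imposes $\phi(p)=p$, the $\R^2$-lift computation forces $\phi$ to fix all four branch points, which is exactly what makes the relevant covering admissible and lets Birman--Hilden's isotopy-lifting apply; the would-be ambiguity then collapses onto the deck group $\langle\sigma\rangle$, and the output is cut down from all of $\Mod(S_{0,4})$ to $\Stab(\bar p)$. Everything past that point — recognizing $\Stab(\bar p)$ as $\mathrm{PSL}(2,\Z)$ and the twist generators as half-twists — is bookkeeping with $\mathrm{SL}(2,\Z)$, the structure of $\PMod(S_{0,4})$, and the isomorphism $\B_3/Z(\B_3)\cong\mathrm{PSL}(2,\Z)$.
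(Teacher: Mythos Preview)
The paper does not prove this theorem; it simply records it as a special case of \cite[Theorem~1]{birmanhilden}. Your proposal goes further and supplies an argument, and the computation of the image and kernel in your second paragraph is essentially correct. But the well-definedness argument in your first paragraph has a real gap.

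Your rigidity step is fine: a symmetric $\phi$ with $\phi(p)=p$ and $[\phi]=1$ in $\Mod(T^2,p)$ must fix $\mathrm{Fix}(s)$ pointwise, by the $\widetilde\phi$ computation you give. But you then invoke Birman--Hilden for the unbranched cover $T^2\setminus\mathrm{Fix}(s)\to S_{0,4}$ to deduce $[\bar\phi]=1$, and that theorem requires as input that $\phi$ be isotopic to the identity \emph{on $T^2\setminus\mathrm{Fix}(s)$}, i.e.\ that $[\phi]=1$ in $\Mod(T^2,\mathrm{Fix}(s))$. You only know $[\phi]=1$ in $\Mod(T^2,p)$. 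The forgetful map $\PMod(T^2,\mathrm{Fix}(s))\to\Mod(T^2,p)$ has a nontrivial kernel---point-pushes of the three Weierstrass points other than $p$---and nothing you have written rules out $[\phi]$ being a nontrivial symmetric element of that kernel. The gap is fillable: one shows that no nontrivial such point-push commutes with $\sigma$ by exactly the hyperbolic-geometry argument the paper later uses for Theorem~\ref{thm:sbes1}, iterated three times (each time forgetting one Weierstrass point from a once-, twice-, or thrice-punctured torus, all of which are hyperbolic so that the ``rotation by $\pi$ has a unique fixed point'' step applies). With that step inserted, the forgetful map from $\SMod(T^2,\mathrm{Fix}(s))$ to $\SMod(T^2,p)$ is injective on the stabilizer of $p$, the map $\theta$ is well defined, and the rest of your argument goes through.
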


\section{Birman exact sequences for the hyperelliptic mapping class group}
\label{section:bes}

In this section we give Birman exact sequences for hyperelliptic mapping class groups in the two cases which will be of interest for us: first, forgetting one marked point, and then forgetting two.  

We begin by recalling the classical Birman exact sequence.  Let $S$ denote a connected, orientable, compact surface with finitely many marked points in its interior.  Assume that the surface obtained by removing the marked points from $S$ has negative Euler characteristic.   Let $p \in S$ be a marked point (distinct from any others in $S$).  There is a forgetful map $\Mod(S,p) \to \Mod(S)$, and the Birman exact sequence identifies the kernel of this map with $\pi_1(S,p)$:
\[ 1 \to \pi_1(S,p) \stackrel{\Push}{\to} \Mod(S,p) \to \Mod(S) \to 1.\]
Given an element $\alpha$ of $\pi_1(S,p)$, we can describe $\Push(\alpha)$ as the map obtained by pushing $p$ along $\alpha$; see \cite[Section 5.2]{primer} or \cite[Section 1]{birmanes}.

\subsection{Forgetting one point}

Let $p \in S_g$ be a fixed point of ${s}$.  As in the classical Birman exact sequence, there is a forgetful map $\SMod(S_g, p) \to \SMod(S_g)$.  

\begin{thm}
\label{thm:sbes1}
Let $g \geq 0$.  The forgetful map $\SMod(S_g, p) \to \SMod(S_g)$ is injective.
\end{thm}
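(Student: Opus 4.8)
The plan is to compare the hyperelliptic Birman sequence with the classical one. The forgetful map $\SMod(S_g,p) \to \SMod(S_g)$ is the restriction of the classical forgetful map $\Mod(S_g,p) \to \Mod(S_g)$, so its kernel is contained in the kernel of the classical map, which by the Birman exact sequence is $\Push(\pi_1(S_g,p))$. Thus it suffices to show that the only point-pushing map that commutes with $s$ (after forgetting $p$) and hence lies in $\SMod(S_g,p)$ is the trivial one. In other words, I would show: if $\alpha \in \pi_1(S_g,p)$ and $\Push(\alpha) \in \SMod(S_g,p)$, then $\alpha$ is trivial.

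First I would handle the low-genus cases $g \in \{0,1\}$ separately, since there the classical Birman exact sequence may degenerate or $\pi_1(S_g)$ behaves differently; for $g=0$ the statement is immediate as $\SMod(S^2,p)=\Mod(S^2,p)=1$, and for $g=1$ one can argue directly using $\SMod(T^2,p)=\Mod(T^2,p)\cong \mathrm{SL}(2,\Z)$ and the fact that $s$ acts as $-I$. For $g \geq 2$, the key tool is the Birman--Hilden correspondence (Theorems~\ref{thm:bh} and~\ref{thm:bh marked}). An element of $\SMod(S_g,p)$ descends via $\theta$ to an element of $\Mod(S_{0,2g+2})$ fixing the additional marked point coming from $p$; but if $\Push(\alpha)$ maps to the identity in $\SMod(S_g)$, then its image downstairs lies in the kernel of the forgetful map $\Mod(S_{0,2g+2},\bar p) \to \Mod(S_{0,2g+2})$, which is $\Push(\pi_1(S_{0,2g+2},\bar p))$. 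So $\Push(\alpha)$ upstairs projects to a point-pushing map $\Push(\bar\alpha)$ downstairs, where $\bar\alpha$ is the image of $\alpha$ under $\pi_1(S_g,p)\to\pi_1(S_{0,2g+2},\bar p)$ (the map induced by the branched cover, which is injective on $\pi_1$ away from the branch locus). Compatibility of the two Birman exact sequences under $\theta$ — which I would verify by noting that pushing $p$ along $\alpha$ upstairs is $s$-equivariantly the same as pushing $p_1$ along a lift while pushing $p_2$ along its $s$-image, and this descends to pushing $\bar p$ along $\bar\alpha$ — then forces $\bar\alpha$, hence $\alpha$, to be trivial, because $\Push$ is injective downstairs.

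The main obstacle I anticipate is establishing this compatibility carefully: the marked point $p$ upstairs is a \emph{branch point} of the double cover $S_g \to S_{0,2g+2}$, so pushing $p$ around a loop $\alpha$ is not simply "lift the pushing downstairs" — there is no marked point to push downstairs at a branch point in the naive sense, and one has to be careful about whether $\Push(\alpha)$ even lies in $\SMod(S_g,p)$ for a given $\alpha$ (it need not — that is the whole content). The cleanest route is probably to argue contrapositively and representation-theoretically: if $\Push(\alpha)$ is nontrivial and lies in $\SMod(S_g,p)$, examine its action or the geometry of $\alpha$ relative to $s$. Since $s$ fixes $p$ and acts as $-I$ on $H_1(S_g;\Z)$, conjugation by $s$ sends $\Push(\alpha)$ to $\Push(s_*\alpha)$; commuting with $s$ (in the appropriate sense in $\SMod(S_g,p)$) forces $s_*\alpha = \alpha$ in $\pi_1(S_g,p)$, but $s$ acts freely on $\pi_1(S_g,p)\setminus\{1\}$ up to the relevant equivalence — more precisely $s_* = -\mathrm{id}$ on $H_1$, so any nontrivial $\alpha$ with $[\alpha]\neq 0$ is moved; the separating case $[\alpha]=0$ requires a short additional argument, handled via the induced action on the lower central series or by passing to $S_{0,2g+2}$ as above. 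I would present the $g\geq 2$ argument primarily through Birman--Hilden, relegating the homological observation to dispatch the handful of classes not visible there, and treat $g\le 1$ by inspection.
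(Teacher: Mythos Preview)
Your reduction is correct and matches the paper: for $g \geq 2$ the kernel lies in $\Push(\pi_1(S_g,p))$, conjugation gives $\sigma\,\Push(\alpha)\,\sigma^{-1}=\Push(s_*\alpha)$, and since $\Push$ is injective one is reduced to showing that $s_*$ has no nontrivial fixed point on $\pi_1(S_g,p)$. The low-genus cases are also handled as in the paper.

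The gap is in how you propose to finish that last step. Neither of your two routes works as written.

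\emph{The Birman--Hilden route.} Since $p$ is a fixed point of $s$, it is a branch point: the quotient $(S_g,p)/\langle s\rangle$ is $S_{0,2g+2}$ with $\bar p$ equal to one of the \emph{existing} $2g+2$ marked points, not an additional one. So there is no forgetful map $\Mod(S_{0,2g+2},\bar p)\to\Mod(S_{0,2g+2})$ of the kind you invoke, and there is no induced map $\pi_1(S_g,p)\to\pi_1(S_{0,2g+2},\bar p)$ to exploit. Theorems~\ref{thm:bh} and~\ref{thm:bh marked} do not cover the pair $(S_g,p)$; you would need a separate Birman--Hilden statement identifying $\SMod(S_g,p)/\langle\sigma\rangle$ with the stabilizer in $\Mod(S_{0,2g+2})$ of the marked point $\bar p$, at which point the map downstairs is simply an inclusion --- but you have not established that, and it is not among the tools cited.

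\emph{The homology/lower-central-series route.} The observation that $s_*=-I$ on $H_1$ disposes of $\alpha$ with $[\alpha]\neq 0$, but the proposed extension does not close the remaining case. Because $s_*$ acts as $-I$ on $H_1$, it acts as $(-1)^k$ on the graded piece $\gamma_k/\gamma_{k+1}$ of the lower central series; in particular it acts \emph{trivially} on $\gamma_2/\gamma_3$. So an element $\alpha\in[\pi_1,\pi_1]\setminus\gamma_3$ is not excluded by this filtration argument, and ``a short additional argument'' is not in evidence.

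The paper instead settles the fixed-point question geometrically. Choose a hyperbolic metric on $S_g$ for which $s$ is an isometry, and lift $s$ to the isometry $\widetilde s$ of $\mathbb{H}^2$ fixing a chosen lift $\widetilde p$. Then $\widetilde s$ has order two, hence is a rotation by $\pi$ with $\widetilde p$ as its \emph{unique} fixed point. But $\widetilde s(\gamma\cdot\widetilde p)=s_*(\gamma)\cdot\widetilde p$, so $s_*\alpha=\alpha$ would force $\widetilde s$ to fix $\alpha\cdot\widetilde p$ as well, giving $\alpha=1$. This replaces your incomplete algebraic step with a two-line covering-space argument.
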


Note that this map is not surjective for $g \geq 2$.  For example, its image does not contain a Dehn twist about a symmetric curve through $p$. 

\begin{proof}

The group $\SMod(S^2,p)$ is trivial, so there is nothing to show in this case.  For $g=1$, we can use Fact~\ref{fact:low genus} plus the fact that the forgetful map $\Mod(T^2,p) \to \Mod(T^2)$ is an isomorphism \cite[Section 5.2]{primer}.  So assume $g \geq 2$.  

The classical Birman exact sequence gives:
\[ 1 \to \pi_1(S_g) \stackrel{\Push}{\to} \Mod(S_g,p) \to \Mod(S_g) \to 1 .\]
Therefore, to prove the theorem, we need to show that the image of $\pi_1(S_g)$ in $\Mod(S_g,p)$ intersects $\SMod(S_g,p)$ trivially.  In other words, we need to show that $\sigma$ does not commute with any nontrivial element of the image of $\pi_1(S_g)$.  For $f \in \Mod(S_g,p)$ and $\alpha \in \pi_1(S_g)$, we have that $f \Push(\alpha) f^{-1} = \Push(f_\star(\alpha))$.  Therefore, we need to show that $\sigma_\star(\alpha) \neq \alpha$ for all nontrivial $\alpha \in \pi_1(S_g)$.

Choose a hyperbolic metric on $S_g$ so that $s$ is an isometry.  A concrete way to do this is to identify $S_g$ with a hyperbolic $(4g+2)$-gon with opposite sides glued, and take $s$ to be rotation by $\pi$ through the center.

Next, choose a universal metric covering $\mathbb{H}^2 \to S_{g}$.  The preimage of $p$ in $\mathbb{H}^2$ is the set $\{ \gamma \cdot \widetilde p : \gamma \in \pi_1(S_g) \}$, where $\widetilde p$ is some fixed lift of $p$.

The map ${s}$ has a unique lift $\widetilde {s}$ to $\Isom^+(\mathbb{H}^2)$ that fixes $\widetilde p$.  This lift has order two.  By the classification of elements of $\Isom^+(\mathbb{H}^2)$, it is a rotation by $\pi$.  Thus, $\widetilde {s}$ has exactly one fixed point.

The action of $\widetilde {s}$ on the set $\{ \gamma \cdot \widetilde p\}$ is given by
\[ \gamma \cdot \widetilde p \mapsto {s}_\star(\gamma) \cdot \widetilde p. \]
If $\sigma_\star(\alpha)=\alpha$, then it follows that $\widetilde {s}$ fixes $\alpha \cdot \widetilde p$.  But we already said that $\widetilde {s}$ has a unique fixed point, namely $\widetilde p$.  So $\alpha=1$, as desired.
\end{proof}

\subsection{Forgetting two points} 
\label{subsection:forget2}

Let $p_1,p_2 \in S_g$, with ${s}(p_1)=p_2$, and let $p$ denote the image of $p_1 \cup p_2$ in the quotient sphere $S_{0,2g+2}$.  Let $\SBK(S_g,\{p_1,p_2\})$ denote the kernel of the forgetful homomorphism $\SMod(S_g, \{ p_1 , p_2 \}) \to \SMod(S_g)$ (the notation is for ``symmetric Birman kernel'').  We have a short exact sequence:
\[ 1 \to \SBK(S_g,\{p_1,p_2\}) \to \SMod(S_g, \{ p_1 , p_2 \}) \to \SMod(S_g) \to 1.\]

By $\pi_1(S_{0,2g+2},p)$ we mean the fundamental group of the complement in $S_{0,2g+2}$ of the $2g+2$ marked points.

\begin{thm}
\label{thm:sbes2}
Let $g \geq 1$.  We have that $\SBK(S_g,\{p_1,p_2\}) \cong F_{2g+1}$, where $F_{2g+1}$ is identified with $\pi_1(S_{0,2g+2},p)$.
\end{thm}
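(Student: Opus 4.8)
The plan is to identify $\SBK(S_g,\{p_1,p_2\})$ by comparing the symmetric Birman exact sequence with the classical one downstairs on the quotient sphere, using the Birman--Hilden theorem (Theorem~\ref{thm:bh marked}) to pass between the two. First I would set up the commutative diagram relating the forgetful maps $\SMod(S_g,\{p_1,p_2\}) \to \SMod(S_g)$ and $\Mod(S_{0,2g+2},p) \to \Mod(S_{0,2g+2})$ via the maps $\theta$ of Theorems~\ref{thm:bh} and~\ref{thm:bh marked}. Since both vertical maps have kernel $\langle\sigma\rangle$, and $\sigma$ lies in the kernel of the forgetful map $\SMod(S_g,\{p_1,p_2\}) \to \SMod(S_g)$ (it acts trivially on $S_g$ and preserves $\{p_1,p_2\}$), a diagram chase should show that $\theta$ restricts to an isomorphism from $\SBK(S_g,\{p_1,p_2\})$ onto the Birman kernel of $\Mod(S_{0,2g+2},p) \to \Mod(S_{0,2g+2})$. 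The latter kernel is, by the classical Birman exact sequence applied to the sphere $S_{0,2g+2}$ with the extra marked point $p$, exactly $\pi_1(S_{0,2g+2},p) \cong F_{2g+1}$ (the fundamental group of the $(2g+2)$-punctured sphere is free of rank $2g+1$). That would give the claimed isomorphism, with the identification of $\SBK$ with $\pi_1(S_{0,2g+2},p)$ built into the argument.

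The point requiring care is the verification that the square genuinely commutes and that $\langle\sigma\rangle$ sits compatibly inside the two Birman kernels, so that the induced map on kernels is well defined and bijective. Concretely: given $f \in \SBK(S_g,\{p_1,p_2\})$, its image downstairs $\theta(f)$ fixes all $2g+2$ branch points and $p$, and descends to the identity on $\Mod(S_{0,2g+2})$ since $f$ descends to the identity on $\SMod(S_g)$ (again using $\theta$ on the closed surfaces). Hence $\theta(f)$ lies in the classical Birman kernel $\pi_1(S_{0,2g+2},p)$. Conversely, any point-pushing map on $S_{0,2g+2}$ along a loop in the complement of the branch points lifts — because such a loop misses the branch locus, the lift is a point-push of $\{p_1,p_2\}$ along the preimage arc(s), giving an element of $\SMod(S_g,\{p_1,p_2\})$ mapping to the identity in $\SMod(S_g)$. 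Injectivity of $\theta$ restricted to $\SBK$ follows from $\ker\theta = \langle\sigma\rangle$ together with the observation that $\sigma \notin \SBK$ would have to be checked — but in fact $\sigma$ interchanges $p_1$ and $p_2$ while acting trivially on the underlying surface, so one must be slightly careful about whether $\sigma$ is counted in $\SMod(S_g,\{p_1,p_2\})$; since $\SMod(S_g,\{p_1,p_2\})$ consists of classes preserving the \emph{set} $\{p_1,p_2\}$, we have $\sigma \in \SBK(S_g,\{p_1,p_2\})$, so $\theta|_{\SBK}$ has kernel $\langle\sigma\rangle$, and one then shows $\sigma$ maps to the nontrivial deck-type element — wait, this needs the quotient picture. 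This is exactly the subtle spot.

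The main obstacle, then, is pinning down the role of $\sigma$: because $\theta$ has kernel $\langle\sigma\rangle$ everywhere, and $\sigma \in \SBK(S_g,\{p_1,p_2\})$, the restricted map $\theta|_{\SBK} : \SBK(S_g,\{p_1,p_2\}) \to \pi_1(S_{0,2g+2},p)$ is two-to-one onto its image unless the image also secretly contains something killed by passing back up. The clean resolution is to note that $\sigma$ maps under $\theta$ to the \emph{identity} of $\Mod(S_{0,2g+2},p)$ — since $\sigma$ is the deck transformation — so $\theta|_{\SBK}$ factors through $\SBK/\langle\sigma\rangle$ and is then injective there, and combined with surjectivity onto $\pi_1(S_{0,2g+2},p)$ we get $\SBK/\langle\sigma\rangle \cong \pi_1(S_{0,2g+2},p) \cong F_{2g+1}$. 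To upgrade this to $\SBK \cong F_{2g+1}$ (rather than an extension by $\Z/2$), I would argue that $\sigma$ is \emph{not} in the Birman kernel after all when we correctly track that $\SMod(S_g,\{p_1,p_2\})$ is the lift of $\Mod(S_{0,2g+2},p)$ and the forgetful map upstairs is compatible with the one downstairs — i.e.\ the Birman exact sequence upstairs is literally the pullback of the one downstairs along $\theta$ restricted appropriately — which forces $\SBK \to \pi_1(S_{0,2g+2},p)$ to be an isomorphism. Reconciling these two accountings of $\sigma$ is the crux; I expect the correct bookkeeping is that the symmetric Birman kernel is defined via the forgetful map that forgets the \emph{pair}, under which $\sigma$ (which moves the pair setwise but each point individually, hence is trivial on $S_g$) does lie in the kernel, and hence the honest statement is an isomorphism $\SBK \cong F_{2g+1}$ obtained by checking the lift-and-descend maps are mutually inverse, with $\sigma$ corresponding to the nontrivial involution hidden in the covering — a loop structure I would verify by an explicit small-genus or generators computation.
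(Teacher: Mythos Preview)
Your overall strategy is exactly the paper's: set up the commutative square with the Birman--Hilden maps $\theta$ as the vertical arrows (Theorems~\ref{thm:bh}, \ref{thm:bh marked}, \ref{thm:bh low genus}) and the two forgetful maps as the horizontal arrows, invoke the classical Birman exact sequence for $\Mod(S_{0,2g+2},p) \to \Mod(S_{0,2g+2})$ to identify the bottom kernel with $\pi_1(S_{0,2g+2},p) \cong F_{2g+1}$, and chase.

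The genuine gap is your treatment of $\sigma$. You write that $\sigma$ ``acts trivially on $S_g$'' and conclude $\sigma \in \SBK(S_g,\{p_1,p_2\})$, which then generates your phantom $\Z/2$ extension problem. This is simply false: $\sigma$ is the hyperelliptic involution, a \emph{nontrivial} order-two element of $\SMod(S_g)$ for $g \geq 1$ (it acts by $-I$ on $H_1(S_g;\Z)$). Under the forgetful map $\SMod(S_g,\{p_1,p_2\}) \to \SMod(S_g)$, the class $\sigma$ maps to $\sigma$, not to the identity. Hence $\sigma \notin \SBK(S_g,\{p_1,p_2\})$, and there is no extension issue to resolve.

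With this correction the chase is immediate. The restriction $\theta|_{\SBK}$ lands in $\pi_1(S_{0,2g+2},p)$ by commutativity of the square. It is injective because $\ker\theta \cap \SBK = \langle\sigma\rangle \cap \SBK = 1$. For surjectivity, lift $\alpha \in \pi_1(S_{0,2g+2},p)$ via Theorem~\ref{thm:bh marked} to some $f \in \SMod(S_g,\{p_1,p_2\})$; its image $\bar f \in \SMod(S_g)$ satisfies $\theta(\bar f)=1$, so $\bar f \in \langle\sigma\rangle$; if $\bar f = \sigma$, replace $f$ by $\sigma f$, which still has $\theta(\sigma f)=\alpha$ and now lies in $\SBK$. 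This is precisely what the paper encodes by placing the isomorphism $\langle\sigma\rangle \stackrel{\cong}{\to} \langle\sigma\rangle$ across the top of its diagram.
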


\begin{proof}

We have the following commutative diagram.
\[
\xymatrix{
& & 1\ar[d] & 1 \ar[d]&\\
& & \langle \sigma \rangle  \ar[d] \ar[r]^\cong & \langle \sigma \rangle \ar[d]  & \\
1 \ar[r]& \SBK(S_g,\{p_1,p_2\})  \ar[r]& \SMod(S_g, \{p_1, p_2 \}) \ar[d] \ar[r] & \SMod(S_g) \ar[r] \ar[d] & 1 \\
1 \ar[r]& \pi_1(S_{0,2g+2},p) \ar@{}[d]^{\rotatebox[origin=c]{270}{$\cong$}}
 \ar[r] & \Mod(S_{0,2g+2},p) \ar[r] \ar[d] & \Mod(S_{0,2g+2}) \ar[r] & 1\\
&\ \ \ F_{2g+1} & 1 & & 
}
\]
The second horizontal short exact sequence is an instance of the Birman exact sequence, and the two vertical sequences are given by Theorems~\ref{thm:bh}, \ref{thm:bh marked}, and~\ref{thm:bh low genus}.  From the diagram it is straightforward to see that $\SBK(S_g,\{p_1,p_2\}) \cong \pi_1(S_{0,2g+2})$.
\end{proof}

\section{Birman exact sequences for hyperelliptic Torelli groups: main results}
\label{sec:main}

The main results of the paper are Birman exact sequences for hyperelliptic Torelli groups.  As in the previous section, there are two versions, corresponding to forgetting one point (Theorem~\ref{thm:sibes1}) and forgetting two points (Theorem~\ref{thm:algchar}).

\subsection{Forgetting one point} 

Let $\PMod(S_{0,2g+2})$ denote the subgroup of $\Mod(S_{0,2g+2})$ consisting of elements that induce the trivial permutation of the marked points.  The next fact is an easy exercise (it also follows immediately from the main result of \cite{arnold}).

\begin{lem}
\label{lem:torelli pure}
Let $g \geq 0$.  Under the map $\theta:\SMod(S_g) \to \Mod(S_{0,2g+2})$, the image of $\SI(S_g)$ lies in $\PMod(S_{0,2g+2})$.  
\end{lem}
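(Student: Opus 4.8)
The plan is to show that an element $f \in \SI(S_g)$ must act trivially on $H_1$ of the base $S_g$, and that this property, together with the Birman--Hilden correspondence, forces $\theta(f)$ to fix each of the $2g+2$ marked points of $S_{0,2g+2}$ individually, i.e., $\theta(f) \in \PMod(S_{0,2g+2})$. The natural line of attack is to compare two homology actions. On the one hand, $f$ acts trivially on $H_1(S_g;\Z)$ by hypothesis. On the other hand, the $2g+2$ fixed points of $s$ in $S_g$ (equivalently, the $2g+2$ marked points of the quotient $S_{0,2g+2}$) are the branch points of the double cover $S_g \to S_{0,2g+2}$, and there is a classical dictionary — via the Gauss--Manin / transfer correspondence, or concretely via the standard basis of $H_1(S_g;\Z)$ given by lifts of loops around pairs of branch points in the sphere — between the permutation action of $\Mod(S_{0,2g+2})$ on the branch points and a (subquotient of the) action on $H_1(S_g;\Z)$. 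Since $f$ acts trivially on $H_1(S_g;\Z)$, it must act trivially on that subquotient, and hence $\theta(f)$ permutes the branch points trivially.

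Concretely, the key steps would be: (1) Recall the hyperelliptic basis for $H_1(S_g;\Z)$: choose an ordering $q_1,\dots,q_{2g+2}$ of the fixed points of $s$, and for each $i$ let $c_i \subset S_{0,2g+2}$ be a small loop enclosing exactly $q_i$ and $q_{i+1}$; each $c_i$ lifts to a symmetric simple closed curve $\widetilde c_i \subset S_g$, and appropriate choices of $2g$ of these curves give a symplectic basis of $H_1(S_g;\Z)$. (2) Track how a mapping class $h \in \Mod(S_{0,2g+2})$ inducing a permutation $\pi$ of the branch points acts on these homology classes: the induced permutation governs, up to the relations in $H_1(S_g;\Z)$, the image; in particular one shows that a nontrivial permutation of the branch points cannot be realized by any $\tilde h$ acting trivially on $H_1(S_g;\Z)$. (3) Apply this with $h = \theta(f)$ and $\tilde h$ a symmetric representative of $f$: triviality of the $H_1(S_g;\Z)$-action forces $\theta(f)$ to fix all marked points, so $\theta(f) \in \PMod(S_{0,2g+2})$. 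Alternatively, one simply invokes the cited result of Arnol'd, which computes $H_1$ of the relevant covers and makes the permutation-versus-homology dictionary precise; the lemma then drops out immediately. The low-genus cases $g \in \{0,1\}$ are handled separately but are trivial: for $g=0$ the group $\SI(S^2)$ is trivial, and for $g=1$ one checks directly against $\Mod(S_{0,4})$.

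The main obstacle is making the dictionary between the branch-point permutation and the $H_1(S_g;\Z)$-action precise enough to conclude — one has to be careful that the lifts $\widetilde c_i$ span $H_1(S_g;\Z)$ only up to one relation, and that a transposition of two branch points does act nontrivially on the relevant homology classes (not merely up to sign or up to that single relation). Rather than pushing through this bookkeeping, I expect the cleanest route is exactly the one the authors allude to: quote \cite{arnold} (or the well-known computation of the homology action of the hyperelliptic mapping class group) and read off that the preimage of $\PMod(S_{0,2g+2})$ under $\theta$ is precisely the subgroup acting trivially on $H_1(S_g;\Z)$ modulo the transpositions — so $\SI(S_g) \subseteq \theta^{-1}(\PMod(S_{0,2g+2}))$. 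Since the statement is flagged as "an easy exercise," I would keep the proof to a few lines built around the hyperelliptic basis and the observation that swapping two branch points is detected on homology.
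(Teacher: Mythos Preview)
The paper does not give a proof of this lemma at all: it simply declares the statement an easy exercise and remarks that it also follows immediately from the main result of \cite{arnold}. Your sketch---reading off the permutation of branch points from the action on the hyperelliptic basis of $H_1(S_g;\Z)$, with the low-genus cases handled separately---is exactly the sort of argument the authors have in mind, and you already flag the Arnol'd citation as the clean alternative. So your proposal is correct and aligned with the paper; there is nothing further to compare.

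One small remark on your bookkeeping worry: the cleanest way to avoid the sign and relation issues you mention is to pass to $\Z/2\Z$ coefficients, where $H_1(S_g;\Z/2)$ is naturally identified (this is Arnol'd's computation) with the quotient of the even-weight subspace of $(\Z/2)^{2g+2}$ by the all-ones vector, with $\Mod(S_{0,2g+2})$ acting via the permutation of coordinates. For $g\geq 2$ the only permutation acting trivially on this quotient is the identity, which gives the lemma immediately; for $g\leq 1$ the group $\SI(S_g)$ is trivial anyway.
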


We are now ready for the proof of our first Birman exact sequence for hyperelliptic Torelli groups.

\begin{proof}[Proof of Theorem~\ref{thm:sibes1}]

It follows immediately from Theorem~\ref{thm:sbes1} that the homomorphism $\SI(S_g,p) \to \SI(S_g)$ is injective.  We will show that it is also surjective.  Let $f \in \SI(S_g)$ and let $\phi$ be a representative homeomorphism.  By Lemma~\ref{lem:torelli pure}, the induced homeomorphism of $S_{0,2g+2}$ fixes each of the $2g+2$ marked points.  It follows that $\phi$ fixes $p$ and that $\phi$ represents an element $\widetilde f$ of $\SI(S_g,p)$ that maps to $f$.
\end{proof}

Before moving on to the second Birman exact sequence for the hyperelliptic Torelli group, we give a variation of Theorem~\ref{thm:sibes1}, where we forget a boundary component (really, cap a boundary component) instead of a marked point.

\p{Capping a boundary component} Let $p \in S_g$ be a fixed point of ${s}$, and let $\Delta \subset S_g$ be an embedded disk that contains $p$ and is fixed by ${s}$.  The surface $S_g-\Delta$ is a compact surface of genus $g$ with one boundary component, which we denote by $S_g^1$.

The hyperelliptic mapping class group $\SMod(S_g^1)$ is defined as the group of isotopy classes of orientation-preserving homeomorphisms that commute with $s$ are restrict to the identity on $\partial S_g^1$.  Also, $\SI(S_g^1)$ is the subgroup of $\SMod(S_g^1)$ consisting of all elements that act trivially on $H_1(S_g^1;\Z)$.  The Dehn twist $T_{\partial S_g^1}$ is an infinite order element of $\SI(S_g^1)$.

The inclusion $S_g^1 \to S_g$ induces a homomorphism $\SI(S_g^1) \to \SI(S_g)$, and so we can again ask about the kernel.  In this case we have the following.

\begin{thm}
\label{thm:uncapping}
Let $g \geq 1$.  We have
\[ \SI(S_g^1) \cong \SI(S_g) \times \Z, \]
where the map $\SI(S_g^1) \to \SI(S_g)$ is the one induced by the inclusion $S_g^1 \to S_g$, and where the $\Z$ factor is $\langle T_{\partial S_g^1} \rangle$.  
\end{thm}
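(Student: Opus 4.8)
The plan is to build the isomorphism from the classical Birman exact sequence for capping a boundary component and then promote it to the hyperelliptic Torelli setting. Recall that capping the boundary component of $S_g^1$ with a once-marked disk gives a central extension
\[ 1 \to \langle T_{\partial S_g^1} \rangle \to \Mod(S_g^1) \to \Mod(S_g,p) \to 1, \]
and that $T_{\partial S_g^1}$ acts trivially on $H_1(S_g^1;\Z)$, so this restricts to a central extension
\[ 1 \to \langle T_{\partial S_g^1} \rangle \to \SI(S_g^1) \to \SI(S_g,p) \to 1. \]
Here I need to check the restriction is surjective onto $\SI(S_g,p)$: given $f \in \SI(S_g,p)$, lift it to $\Mod(S_g^1)$; since the capping map is a bijection on homology classes supported away from the disk, a lift acts trivially on $H_1(S_g^1)$, and a representative commuting with $s$ on $(S_g,p)$ (using that the disk $\Delta$ is $s$-invariant, hence $S_g^1$ is $s$-invariant) can be chosen to restrict to the identity on $\partial S_g^1$ after composing with a power of $T_{\partial S_g^1}$, so lies in $\SI(S_g^1)$. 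Combining this with the isomorphism $\SI(S_g,p) \cong \SI(S_g)$ from Theorem~\ref{thm:sibes1}, we get a central extension
\[ 1 \to \langle T_{\partial S_g^1} \rangle \to \SI(S_g^1) \to \SI(S_g) \to 1 \]
whose kernel is an infinite cyclic central subgroup; and the composition $\SI(S_g^1) \to \SI(S_g)$ is exactly the map induced by the inclusion $S_g^1 \to S_g$.

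The remaining task is to show this central extension splits — equivalently, that it is the trivial extension $\SI(S_g) \times \Z$. The natural way is to produce an explicit splitting $\SI(S_g) \to \SI(S_g^1)$. I expect to use the ``disk-pushing'' or partial cap structure: $\SI(S_g^1)$ is, via the Birman--Hilden picture and the pure braid group, a subgroup of $PB_{2g+1}$ (as alluded to in the introduction), so a splitting should come from an honest section of the corresponding central extension of braid/mapping class groups restricted to the hyperelliptic Torelli group. Concretely, one approach: choose a symmetric arc or configuration so that a homeomorphism of $S_g$ fixing $p$ and commuting with $s$ can be canonically isotoped to one that is the identity on a neighborhood of $p$ (and hence on $\Delta$), giving a well-defined lift; one then checks this lift is a homomorphism on $\SI(S_g)$. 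Alternatively, and perhaps more cleanly, one uses that $\langle T_{\partial S_g^1}\rangle$ is not just central but a \emph{direct factor} detectable by a homomorphism $\SI(S_g^1) \to \Z$ — for instance a ``winding number'' or framing-type invariant, or the restriction of the abelianization/a suitable crossed homomorphism — that sends $T_{\partial S_g^1}$ to a generator; the kernel of such a retraction then maps isomorphically to $\SI(S_g)$ and gives the product decomposition.

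The main obstacle, I expect, is constructing that retraction (or section) and verifying it is well defined and a homomorphism: one must be careful that isotopies in the definition of $\SMod$ are not required to be $s$-equivariant, so the lift of a symmetric mapping class need not be symmetric on the nose, and one must argue the relevant invariant does not depend on the choices. This is exactly the kind of subtlety that made Theorems~\ref{thm:sbes1} and~\ref{thm:sibes1} require the universal-cover argument. I would first try to leverage the identification with (pure) braid groups: for braid groups the center $\langle$ full twist $\rangle$ splits off after passing to $PB_n/Z$, and there are standard crossed homomorphisms (e.g., from the symmetric Burau kernel picture, or the abelianization of $PB_{2g+1}$ composed with a sum of coordinates) detecting the full twist; restricting such a map to the subgroup identified with $\SI(S_g^1)$ and checking it hits $T_{\partial S_g^1}$ nontrivially would finish the proof. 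If that is cleaner than the topological section, I would present the braid-theoretic version, invoking the identification of $\SI(S_g^1)$ with $K_{2g+1}$ promised in the introduction and the known structure of the center of $PB_{2g+1}$.
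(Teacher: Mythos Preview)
Your proposal is correct and converges on exactly the paper's approach: the paper uses the braid-theoretic route you describe at the end, identifying $\SI(S_g^1)$ with a subgroup of $PB_{2g+1}$ via Birman--Hilden, invoking the standard fact that $PB_n$ splits as a direct product over its center $\langle T_{\partial D_n}\rangle$, and then identifying the quotient with $\SI(S_g)$ via the capping sequence together with Theorem~\ref{thm:sibes1}. Your uncertainty about which splitting mechanism to use is resolved precisely by that known direct-product decomposition of $PB_n$; no crossed homomorphism or topological section needs to be built by hand.
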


\begin{proof}

One version of the Birman--Hilden theorem \cite[Theorem 9.2]{primer} identifies $\SMod(S_g^1)$ isomorphically with the mapping class group of a disk $D_{2g+1}$ with $2g+1$ marked points.  The latter is isomorphic to the braid group $B_{2g+1}$ on $2g+1$ strands.

By Lemma~\ref{lem:torelli pure}, the group $\SI(S_g^1)$ is identified isomorphically with a subgroup of $PB_{2g+1}$, the subgroup of $B_{2g+1}$ consisting of elements that fix each marked point/strand.

For any $n$, the group $PB_n$ splits as a direct product over its center, which is generated by the Dehn twist $T_{\partial D_n}$ \cite[Section 9.3]{primer}.  Under the restriction $\bar \theta : \SI(S_g) \hookrightarrow PB_{2g+1}$, we have $\bar \theta^{-1}(Z(PB_{2g+1})) = \langle T_{\partial S_g^1} \rangle$.  Thus, $\SI(S_g^1)$ splits as a direct product over $\langle T_{\partial S_g^1} \rangle$.

It remains to show that $\SI(S_g^1)/ \langle T_{\partial S_g^1} \rangle \cong \SI(S_g)$.  There is a short exact sequence
\[
1 \to \langle T_{\partial S_g^1} \rangle \to \Mod(S_g^1) \to \Mod(S_g,p) \to 1,
\]
where the map $\Mod(S_g^1) \to \Mod(S_g,p)$ is the one induced by the inclusion $S_g^1 \to (S_g,p)$; see \cite[Proposition 3.19]{primer}.  On the level of hyperelliptic Torelli groups, this gives
\[ 1 \to \langle T_{\partial S_g^1} \rangle \to \SI(S_g^1) \to \SI(S_g,p) \to 1. \]
We have already shown that $\SI(S_g,p) \cong \SI(S_g)$ (Theorem~\ref{thm:sibes1}).  Thus, $\SI(S_g^1)/ \langle T_{\partial S_g^1} \rangle \cong \SI(S_g)$, and we are done.
\end{proof}

In the proof of Proposition~\ref{prop:g12} we will need a version of Theorem~\ref{thm:uncapping} for a surface with two marked points.  Let $g \geq 1$, and let $p_1$ and $p_2$ be distinct points in $S_g^1$ interchanged by $s$.

We define $\SMod(S_g^1,\{p_1,p_2\})$ in the usual way, and then we define $\SI(S_g^1,\{p_1,p_2\})$ as the kernel of the action of $\SMod(S_g^1,\{p_1,p_2\})$ on $H_1(S_g^1,\{p_1,p_2\};\Z)$.

By essentially the same argument as the one used for Theorem~\ref{thm:uncapping}, we have, for $g \geq 0$,
\[ \SI(S_g^1,\{p_1,p_2\}) \cong \SI(S_g,\{p_1,p_2\}) \times \Z, \]
where the $\Z$ factor is the Dehn twist about $\partial S_g^1$.


\subsection{Forgetting two points}
\label{siforget2}

Recall from Theorem~\ref{thm:sbes2} that the kernel of $\SMod(S_g, \{ p_1 , p_2 \}) \to \SMod(S_g)$ is $\SBK(S_g,\{p_1,p_2\}) \cong F_{2g+1}$, which is identified with $\pi_1(S_{0,2g+2})$.  Let $p$ denote the image of $p_1 \cup p_2$ in $S_{0,2g+2}$. Let $\zeta_1,\dots,\zeta_{2g+1}$ be the generators for $\pi_1(S_{0,2g+2},p) \cong F_{2g+1}$ shown in Figure~\ref{figure:zetas}.  In what follows, we identify $F_{2g+1}$ with $\pi_1(S_{0,2g+2},p)=\langle \zeta_i\rangle$.

\begin{figure}[htb]
\psfrag{...}{$\dots$}
\psfrag{z1}{$\zeta_{2g+1}$}
\psfrag{z2}{$\zeta_{2g}$}
\psfrag{z2g-1}{$\zeta_1$}
\psfrag{abar}{$p$}
\centerline{\includegraphics[scale=1.25]{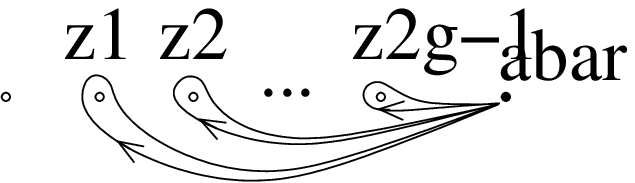}}
\caption{The elements $\zeta_i$ of $\pi_1(S_{0,2g+2},p)$.}
\label{figure:zetas}
\end{figure}

Let  $\SIBK(S_g,\{p_1,p_2\})$ denote the kernel of the forgetful homomorphism $\SI(S_g, \{ p_1,p_2 \}) \to \SI(S_g)$, that is:
\[ 1 \to \SIBK(S_g,\{p_1,p_2\}) \to \SI(S_g, \{ p_1,p_2 \}) \to \SI(S_g) \to 1\]
(the notation stands for ``symmetric Torelli Birman kernel'').  

Consider the homomorphism $F_{2g+1} \to \Z$ obtained by sending each $\zeta_i$ to 1.  As in the introduction, we define the \emph{even subgroup} of $F_{2g+1}$ to be the preimage of $2\Z$, and we denote it by $F_{2g+1}^{even}$.  The elements of $F_{2g+1}^{even}$ are products $\zeta_{i_1}^{\alpha_1}\zeta_{i_2}^{\alpha_2}\cdots \zeta_{i_k}^{\alpha_k}$ where $k$ is even and $\alpha_i \in \{-1,1\}$ for all ${i}$.  

As in the introduction, let 
\[ \epsilon : F_{2g+1}^{even} \to \Z^{2g+1} \]
be the homomorphism given by
\[ \zeta_{i_1}^{\alpha_1}\zeta_{i_2}^{\alpha_2}\cdots \zeta_{i_k}^{\alpha_k} \mapsto \sum_{j=1}^k (-1)^{j+1} e_{i_k} \]
where $\{e_1, \ldots, e_{2g+1}\}$ are the standard generators for $\Z^{2g+1}$.

Theorem~\ref{thm:algchar} states that, as a subgroup of $\SBK(S_g,\{p_1,p_2\})$, the group $\SIBK(S_g,\{p_1,p_2\})$ is equal to the image of $\ker \epsilon$ under the isomorphism
\[  \langle \zeta_1,\dots,\zeta_{2g+1} \rangle = F_{2g+1} \cong \pi_1(S_{0,2g+2},p) \stackrel{\cong}{\to} \SBK(S_g,\{p_1,p_2\}).  \]

In order to prove Theorem~\ref{thm:algchar}, we will need two lemmas describing the action of elements of $\SBK(S_g,\{p_1,p_2\})$ on the relative homology $H_1(S_g,\{p_1,p_2\};\Z)$.  Our argument has its origins in the work of Johnson \cite[Section 2]{dj2}, van den Berg \cite[Section 2.4]{vdb}, and Putman \cite[Section 4]{cutpaste}.

A \emph{proper arc} $\alpha$ in a surface $S$ with marked points $\{p_i\}$ is a map $\alpha : I \to (S,\{p_i\})$ where $\alpha^{-1}(\{p_i\}) = \{0,1\}$.

\begin{lem}
\label{lem:fix2}
Let $g \geq 1$. If $f$ is an element of $\SBK(S_g,\{p_1,p_2\})$, and if $\beta$ is any oriented proper arc in $(S_g,\{p_1.p_2\})$ connecting the two marked points, then $f$ is an element of $\SIBK(S_g,\{p_1,p_2\})$ if and only if in $H_1(S_g,\{p_1,p_2\};\Z)$ we have  $f_\star([\beta]) = [\beta]$.
 \end{lem}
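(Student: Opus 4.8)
The plan is to reduce the statement about acting trivially on all of $H_1(S_g,\{p_1,p_2\};\Z)$ to the single class $[\beta]$, using the structure of relative homology. First I would recall the long exact sequence of the pair $(S_g,\{p_1,p_2\})$: since $H_1(\{p_1,p_2\})=0$ and $\widetilde H_0(\{p_1,p_2\}) \cong \Z$, we get a short exact sequence
\[ 0 \to H_1(S_g;\Z) \to H_1(S_g,\{p_1,p_2\};\Z) \to \widetilde H_0(\{p_1,p_2\};\Z) \to 0, \]
and the last group is $\Z$, generated by the image of any oriented proper arc $[\beta]$ from $p_1$ to $p_2$. Thus $H_1(S_g,\{p_1,p_2\};\Z)$ is generated by $H_1(S_g;\Z)$ together with $[\beta]$. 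An element $f \in \SBK(S_g,\{p_1,p_2\})$ automatically acts trivially on $H_1(S_g;\Z)$: it lies in $\SMod(S_g,\{p_1,p_2\})$, and its image in $\SMod(S_g)$ is trivial (as $f$ is in the Birman kernel), so in particular $f$ acts trivially on $H_1(S_g;\Z)$. Moreover $f$ fixes the marked points $p_1,p_2$ individually (being in the pure Birman kernel, not merely preserving the set $\{p_1,p_2\}$), so $f_\star$ fixes the generator of $\widetilde H_0(\{p_1,p_2\};\Z)$.

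Given these two facts, the forward direction is immediate: if $f \in \SIBK(S_g,\{p_1,p_2\})$ then by definition $f_\star$ is the identity on all of $H_1(S_g,\{p_1,p_2\};\Z)$, hence $f_\star([\beta])=[\beta]$. For the converse, suppose $f_\star([\beta])=[\beta]$. For any other class $c \in H_1(S_g,\{p_1,p_2\};\Z)$, write its image in $\widetilde H_0(\{p_1,p_2\};\Z) \cong \Z$ as $n$ times the generator; then $c - n[\beta]$ maps to $0$, so it lies in the image of $H_1(S_g;\Z)$, say $c - n[\beta] = \iota_*(a)$ for some $a \in H_1(S_g;\Z)$. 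Since $f_\star$ commutes with $\iota_*$ and fixes $a$ (as $f$ acts trivially on absolute homology), and since $f_\star([\beta])=[\beta]$ by hypothesis, we get $f_\star(c) = f_\star(\iota_*(a)) + n f_\star([\beta]) = \iota_*(a) + n[\beta] = c$. Hence $f_\star$ is the identity on $H_1(S_g,\{p_1,p_2\};\Z)$, i.e.\ $f \in \SIBK(S_g,\{p_1,p_2\})$.

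The only subtlety — and the step I would be most careful with — is justifying that membership in $\SBK(S_g,\{p_1,p_2\})$ really does force trivial action on $H_1(S_g;\Z)$ and fixing of each marked point; this is where one uses that the kernel in Theorem~\ref{thm:sbes2} is the kernel of the map to $\SMod(S_g)$ (not $\Mod(S_g)$), combined with the identification of $\SBK$ with a subgroup of $\pi_1(S_{0,2g+2},p)$, whose elements push the single point $p$ and hence lift to homeomorphisms of $S_g$ fixing both $p_1$ and $p_2$. A second small point is that the choice of orientation and endpoints of $\beta$ does not matter: reversing orientation negates $[\beta]$ and $f_\star([\beta])=[\beta]$ is unaffected, and any two proper arcs from $p_1$ to $p_2$ have images in $\widetilde H_0$ differing by a class coming from $H_1(S_g;\Z)$, on which $f_\star$ is already trivial — so the condition $f_\star([\beta])=[\beta]$ is independent of the choice, consistent with the statement's ``any'' quantifier. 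Once these observations are in place the proof is a short diagram chase on the exact sequence of the pair.
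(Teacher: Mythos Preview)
Your proof is correct and follows essentially the same line as the paper's own argument: both reduce to showing that any $f\in\SBK(S_g,\{p_1,p_2\})$ already acts trivially on the image of $H_1(S_g;\Z)$ in $H_1(S_g,\{p_1,p_2\};\Z)$, so that only the single arc class $[\beta]$ remains to be checked. The paper phrases this by choosing a basis of closed curves plus $\beta$ and invoking naturality of $H_1(S_g;\Z)\to H_1(S_g,\{p_1,p_2\};\Z)$, whereas you package the same content in the long exact sequence of the pair; the arguments are interchangeable, and your extra remarks on fixing the marked points and independence of the choice of $\beta$ are correct but not needed for the core implication.
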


\begin{proof}

One direction is trivial: if $f \in \SIBK(S_g,\{p_1,p_2\})$, then by definition, $f$ acts trivially on $H_1(S_g,\{p_1,p_2\};\Z)$.

We now prove the other direction.  There is a basis for $H_1(S_g,\{p_1,p_2\};\Z)$ given by (the classes of) finitely many oriented closed curves plus the oriented arc $\beta$.
Thus, to prove the lemma, we only need to show that any $f \in \SBK(S_g,\{p_1,p_2\})$ preserves the class in $H_1(S_g,\{p_1,p_2\};\Z)$ of each oriented closed curve in $S_g$.

Let $\phi$ be a representative of $f$.  We can regard $\phi$ either as a homeomorphism of $(S_g,\{p_1,p_2\})$ or as a homeomorphism of $S_g$.  Also, let $\gamma$ be an oriented closed curve in $S_g$.  We can similarly regard $\gamma$ as a representative of an element of either $H_1(S_g;\Z)$ or of $H_1(S_g,\{p_1,p_2\};\Z)$.  

Since $f \in \SBK(S_g,\{p_1,p_2\})$, it follows that $\phi$ is isotopic to the identity as a homeomorphism of $S_g$.  In particular, we have
\[ [\gamma] = [\phi(\gamma)] \in H_1(S_g;\Z).\]
There is a natural map $H_1(S_g;\Z) \to H_1(S_g,\{p_1,p_2\};\Z)$ where $[\gamma]$ maps to $[\gamma]$ and $[\phi(\gamma)]$ maps to $[\phi(\gamma)]$.  Since this map is well defined, it follows that
\[ [\gamma] = [\phi(\gamma)] \in H_1(S_g,\{p_1,p_2\};\Z),\]
which is what we wanted to show.
\end{proof}

Lemma~\ref{lem:fix2} tell us that in order to show that an element of the group $\SBK(S_g,\{p_1,p_2\})$ lies in the Torelli group, we only need to keep track of its action on the homology class of a single arc.  The only other ingredient we need in order to prove Theorem~\ref{thm:algchar} is a formula for how elements of $\SBK(S_g,\{p_1,p_2\})$ act on these classes.

Via the isomorphism $\pi_1(S_{0,2g+2},p) \to \SBK(S_g,\{p_1,p_2\})$, there is an action of $\pi_1(S_{0,2g+2},p)$ on $H_1(S_g,\{p_1,p_2\};\Z)$.  We denote the action of $\zeta \in \pi_1(S_{0,2g+2},p)$ by $\zeta_\star$.

Each generator $\zeta_i$ of $\pi_1(S_{0,2g+2},p)$ is represented by a simple loop in $S_{0,2g+2}$ based at $p$ (see Figure~\ref{figure:zetas}).  Each loop lies in the regular neighborhood of an arc in $S_{0,2g+2}$ that connects $p$ to the $i$th marked point of $S_{0,2g+2}$.  We denote the preimage in $(S_g,\{p_1,p_2\})$ of the $i$th such arc in $(S_{0,2g+2},p)$ by $\beta_i$.  We orient the $\beta_i$ so that they all emanate from the same marked point; see Figure~\ref{figure:zetadots}.

\begin{figure}[htb]
\psfrag{...}{$\dots$}
\psfrag{z2g-1}{$\beta_{2g+1}$}
\psfrag{z1}{$\beta_1$}
\psfrag{abar}{$p$}
\centerline{\includegraphics[scale=1.25]{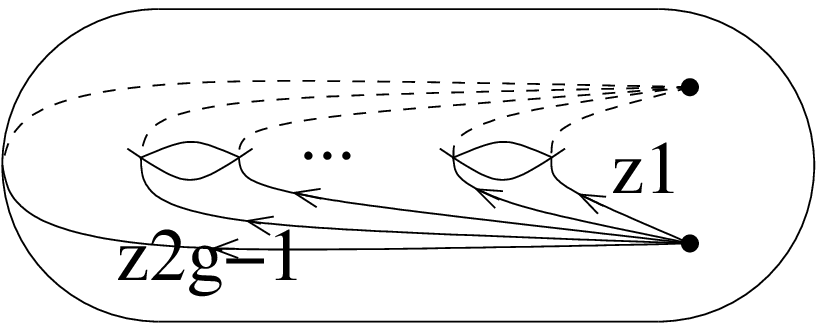}}
\caption{The arcs $\beta_i$ in $(S_g,\{p_1,p_2\})$.}
\label{figure:zetadots}
\end{figure}

For each $i$, we choose a neighborhood $N_i$ of $\beta_i$ that is fixed by $s$.  A \emph{half-twist} about $\beta_i$ is a homeomorphism of $(S_g,\{p_1,p_2\})$ that is the identity on the complement of $N_i$ and is described on $N_i$ by Figure~\ref{figure:zetaaction}.  This half-twist is well defined as a mapping class.

\begin{lem}
\label{lem:action}
Let $g \geq 1$, let $\zeta \in \pi_1(S_{0,2g+2},p)$, and say
\[ \zeta = \zeta_{i_1}^{\alpha_1} \cdots \zeta_{i_{m}}^{\alpha_m} \]
where $\zeta_{i_j} \in \{\zeta_i\}$ and $\alpha_i \in \{-1,1\}$.  We have the following formula for the action on $H_1(S_g,\{p_1,p_2\};\Z)$:
\[ \zeta_\star([\beta_k]) = [\beta_k] + 2\sum_{j=1}^{m} (-1)^j [\beta_{i_j}]. \]
\end{lem}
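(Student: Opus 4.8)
I would prove the formula by first reducing to the case of a single generator $\zeta_i^{\pm 1}$ and then deducing the general case by a cocycle-type induction on the word length $m$. The point is that $\zeta_\star$ is an action of $\pi_1(S_{0,2g+2},p)$ on $H_1(S_g,\{p_1,p_2\};\Z)$, so for a product $\zeta = \zeta_{i_1}^{\alpha_1}\cdots\zeta_{i_m}^{\alpha_m}$ we have $\zeta_\star = (\zeta_{i_1}^{\alpha_1})_\star \circ \cdots \circ (\zeta_{i_m}^{\alpha_m})_\star$ (with whatever composition convention the $\Push$ map dictates — I would fix this convention carefully, since it governs the sign pattern $(-1)^j$). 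Once the single-generator formula is established, the inductive step is a routine telescoping computation: apply the outermost generator's formula to $[\beta_k]$, then feed the result through the remaining word, using linearity and the induction hypothesis; the alternating signs and the factors of $2$ propagate in exactly the claimed way.

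\textbf{The single-generator computation.} The heart of the matter is to compute $(\zeta_i^{\pm 1})_\star([\beta_k])$ for each $k$. Here I would work concretely with the picture in Figure~\ref{figure:zetaaction}: the element $\zeta_i \in \pi_1(S_{0,2g+2},p)$ corresponds, under the Birman--Hilden identification, to a \emph{point-pushing} map on $(S_g,\{p_1,p_2\})$, and on the $s$-invariant neighborhood $N_i$ of $\beta_i$ this is realized by a half-twit-squared (the lift of a point-push around one puncture in $S_{0,2g+2}$ is a full Dehn twist downstairs, hence a product of two half-twists about $\beta_i$ upstairs — this is why the coefficient is $2$ rather than $1$). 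Explicitly: pushing $p_1\cup p_2$ once around $\zeta_i$ drags the arc $\beta_k$ across the region near the $i$th pair of Weierstrass points, and the homological effect is $[\beta_k] \mapsto [\beta_k] - 2[\beta_i]$ (or $+2[\beta_i]$ for $\zeta_i^{-1}$), \emph{independently of $k$}, because all the $\beta_k$ emanate from the same marked point and so interact with $N_i$ in the same way. I would verify this by choosing an explicit embedded model of the $\beta_i$, isotoping $\zeta_i(\beta_k)$ back to a curve that differs from $\beta_k$ by two parallel copies of $\beta_i$ (with a sign determined by orientations), and reading off the relative homology class. The $s$-equivariance is what keeps everything symmetric and ensures $\beta_i$ itself (not, say, an asymmetric arc) is the cycle that appears.

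\textbf{The induction.} Write $\zeta' = \zeta_{i_2}^{\alpha_2}\cdots\zeta_{i_m}^{\alpha_m}$, so $\zeta = \zeta_{i_1}^{\alpha_1}\zeta'$. By the induction hypothesis on $m$, $\zeta'_\star([\beta_\ell]) = [\beta_\ell] + 2\sum_{j=2}^{m}(-1)^{j-1}[\beta_{i_j}]$ for every $\ell$ (note the index shift in the sign). Then
\[
\zeta_\star([\beta_k]) = \zeta'_\star\bigl((\zeta_{i_1}^{\alpha_1})_\star([\beta_k])\bigr) = \zeta'_\star\bigl([\beta_k] - 2[\beta_{i_1}]\bigr) = \zeta'_\star([\beta_k]) - 2\zeta'_\star([\beta_{i_1}]),
\]
and substituting the induction hypothesis for both $\zeta'_\star([\beta_k])$ and $\zeta'_\star([\beta_{i_1}])$, the $2\sum$ terms in the two copies cancel against each other in pairs and leave precisely $[\beta_k] - 2[\beta_{i_1}] + 2\sum_{j=2}^m(-1)^j[\beta_{i_j}] = [\beta_k] + 2\sum_{j=1}^m(-1)^j[\beta_{i_j}]$, as claimed. (If the composition convention runs the other way, one instead peels off the innermost generator; the bookkeeping is identical up to relabeling.)

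\textbf{Main obstacle.} I expect the genuinely delicate point to be the single-generator base case — specifically, pinning down (i) that the coefficient is $2$ and not $1$, which rests on the Birman--Hilden lift of a loop around a single puncture being a \emph{full} Dehn twist, and (ii) that the answer is $-2[\beta_i]$ uniformly in $k$ with the \emph{correct sign}, which requires a careful choice of orientations of the $\beta_i$ and of the positive direction of $\zeta_i$, consistent with the conventions fixed in Figures~\ref{figure:zetas}, \ref{figure:zetadots}, and~\ref{figure:zetaaction}. Everything downstream (the induction, the cancellation) is purely formal once that is nailed down. A secondary technical care point is confirming that the classes $[\beta_i]$ live in $H_1(S_g,\{p_1,p_2\};\Z)$ in a way compatible with the action — i.e.\ that $\zeta_i(\beta_k)$ is again a proper arc from the same endpoint, so that the difference $[\zeta_i(\beta_k)] - [\beta_k]$ is an \emph{absolute} class and can be expanded in the $[\beta_i]$ together with a fixed basis of $H_1(S_g;\Z)$; one checks that only the $\beta_i$-directions contribute because the support of the half-twist is the contractible-in-$S_g$ neighborhood $N_i$.
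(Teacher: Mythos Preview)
Your overall strategy---compute the single-generator action, then induct on word length---matches the paper's exactly, and your inductive bookkeeping lands on the right formula (though the two $\sum$-terms don't ``cancel in pairs''; they combine as $S - 2S = -S$, which flips $(-1)^{j-1}$ to $(-1)^j$ and yields the claim). The paper peels off the last letter rather than the first, but that is immaterial.

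There is, however, a genuine error in your base case. You assert that the lift of $\Push(\zeta_i)$ is ``a product of two half-twists about $\beta_i$,'' i.e.\ the full Dehn twist $T_{\partial N_i}$, and that this is the source of the coefficient $2$. This is backwards. Downstairs, $\Push(\zeta_i)$ is a Dehn twist about a curve encircling $p$ and the $i$th branch point; since that curve surrounds exactly one branch point, its preimage in $S_g$ is a \emph{single} curve $\partial N_i$ and the annular double cover is connected, so the Dehn twist lifts to a \emph{single} half-twist about $\beta_i$---not two of them. Indeed, your identification cannot be right: the square of the half-twist is $T_{\partial N_i}$, and $\partial N_i$ bounds a disk in $S_g$ (the one containing $p_1,p_2$), so $T_{\partial N_i}$ acts trivially on $H_1(S_g,\{p_1,p_2\};\Z)$. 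If $\zeta_i$ really lifted to two half-twists, every $(\zeta_i)_\star$ would be the identity and the formula would collapse. The paper instead checks $(\zeta_i^{\pm 1})_\star([\beta_k]) = [\beta_k]-2[\beta_i]$ directly from the half-twist: for $k=i$ the half-twist reverses the orientation of $\beta_i$, giving $-[\beta_i]=[\beta_i]-2[\beta_i]$; for $k\neq i$ one reads it off from the local picture in a neighborhood of $\beta_i\cup\beta_k$. The coefficient $2$ comes from this orientation reversal, not from a doubled lift.
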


\begin{proof}

First of all, we claim that the image of $\zeta_i$ under the isomorphism $\pi_1(S_{0,2g+2},p) \to \SBK(S_g,\{p_1,p_2\})$ is the half-twist about $\beta_i$.  Indeed, the image of $\zeta_i$ under the map $\pi_1(S_{0,2g+2}) \to \Mod(S_{0,2g+2})$ is a Dehn twist about the boundary of a regular neighborhood of $\zeta_i$, and the unique lift of this Dehn twist to $\SBK(S_g,\{p_1,p_2\})$ is a half-twist about $\beta_i$.

We can now compute the action of $\pi_1(S_{0,2g+2},p)$ on $H_1(S_g,\{p_1,p_2\};\Z)$.  We first deal with the case where $\zeta = \zeta_i^{\pm 1}$.  If $i=k$, then we immediately see that the half-twist about $\beta_i$ (or its inverse) simply reverses the orientation of $\beta_k$, and so we have
\[ \zeta_\star([\beta_k]) = - [\beta_k] = [\beta_k] - 2[\beta_k] = [\beta_k]-2[\beta_i],\]
and the lemma is verified in this case.

If $\zeta=\zeta_i$ where $\zeta_i \neq \zeta_k$, then a neighborhood of $\beta_i \cup \beta_k$ in $(S_g,\{p_1,p_2\})$ is an annulus with two marked points.  As above, $\zeta=\zeta_i$ maps to the half-twist about $\beta_i$.  Simply by drawing the picture of the action (see Figure~\ref{figure:zetaaction}), we check the formula:
\[ \zeta_\star([\beta_k]) = [\beta_k] - 2[\beta_i]. \]
The case $\zeta=\zeta_i^{-1}$ is similar.

\begin{figure}[htb]
\psfrag{=}{$\cong$}
\psfrag{zi}{$\beta_i$}
\psfrag{zk}{$\beta_k$}
\psfrag{f}{$\zeta(\beta_k)$}
\centerline{\includegraphics[scale=1]{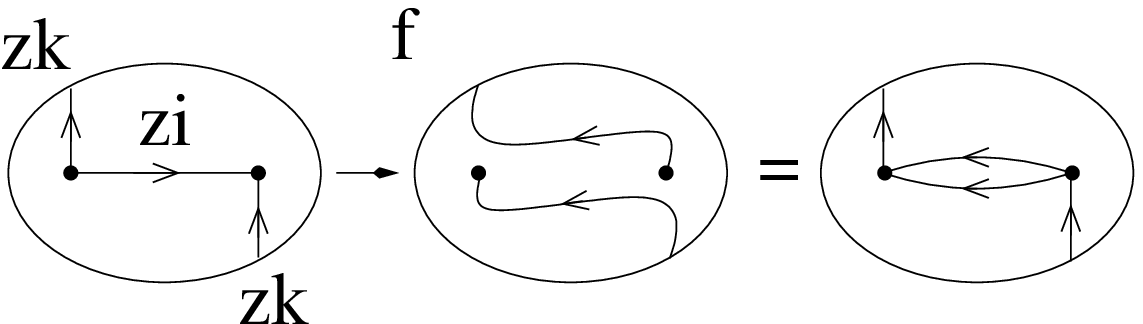}}
\caption{The action of  the half-twist about $\beta_i$ on $\beta_k$.}
\label{figure:zetaaction}
\end{figure}

Since the action of $\SBK(S_g,\{p_1,p_2\})$ on $H_1(S_g,\{p_1,p_2\};\Z)$ is linear, we can now complete the proof of the lemma by induction.  Suppose the lemma holds for $m-1$, that is, the induced action of $\zeta_{i_1}^{\alpha_i} \cdots \zeta_{i_{m-1}}^{\alpha_{m-1}}$ on $[\beta_k]$ is
\[ [\beta_k] \mapsto [\beta_k] + 2\sum_{j=1}^{m-1} (-1)^j [\beta_{i_j}]. \]
By linearity, and applying the case where $\zeta=\zeta_i^{\pm 1}$, the image of the latter homology class under $\zeta_{i_m}^{\alpha_m}$ is
\[ \left([\beta_k]-2[\beta_{i_m}]\right) + 2\sum_{j=1}^{m-1} (-1)^j \left([\beta_{i_j}]-2[\beta_{i_m}]\right), \]
which we rewrite as
\[ [\beta_k] + \left(2\sum_{j=1}^{m-1} (-1)^j [\beta_{i_j}]\right) + \left(4\sum_{j=1}^{m-1} (-1)^{j+1} [\beta_{i_m}]\right) -2[\beta_{i_m}]. \]
The sum of the third and fourth terms is $2(-1)^j[\beta_{i_m}]$, and so the lemma is proven.
\end{proof}

We are now poised to prove Theorem~\ref{thm:algchar}, which states that the map $\SI(S_g,\{p_1,p_2\}) \to \SI(S_g)$ is surjective and identifies its kernel with $\ker \epsilon$.

\begin{proof}[Proof of Theorem~\ref{thm:algchar}]

By Lemma~\ref{lem:fix2}, an element of $\SBK(S_g,\{p_1,p_2\})$ lies in $\SIBK(S_g,\{p_1,p_2\})$ if and only if it fixes the relative class $[\beta_1]$ in $H_1(S_g,\{p_1,p_2\};\Z)$.  It then follows from Lemma~\ref{lem:action} that an element of $\SBK(S_g,\{p_1,p_2\})$ fixes $[\beta_1]$ if and only if it lies in the image of $\ker \epsilon$.

It remains to show that there is a splitting $\SI(S_g) \to \SI(S_g,P)$.  By Theorem~\ref{thm:uncapping}, there is an injective homomorphism $\SI(S_g) \to \SI(S_g^1)$ with a left inverse induced by the inclusion $S_g^1 \to S_g$.   Via the (symmetric) inclusion $S_g^1 \to (S_g, \{ p_1,p_2 \})$, we obtain an injective homomorphism $\SI(S_g^1) \to \SI(S_g, \{ p_1,p_2 \})$.  The composition is the desired map $\SI(S_g) \to \SI(S_g, \{ p_1,p_2 \})$.  
%
%
\end{proof}

\subsection{Generating \boldmath$\SIBK(S_g,\{p_1,p_2\})$ by products of twists}

We will now use Theorem~\ref{thm:topchar} to give another description of the group $\SIBK(S_g,\{p_1,p_2\})$.

\begin{thm}
\label{thm:topchar}
For $g \geq 0$, each element of $\SIBK(S_g,\{p_1,p_2\})$ is a product of Dehn twists about symmetric separating simple closed curves.
\end{thm}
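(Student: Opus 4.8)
The plan is to reduce the statement to a computation inside a single annulus with two marked points, leveraging the structure established in Lemmas~\ref{lem:fix2} and~\ref{lem:action}. By Theorem~\ref{thm:algchar}, the group $\SIBK(S_g,\{p_1,p_2\})$ is the image of $\ker\epsilon \subset F_{2g+1}^{even}$ under the isomorphism $\pi_1(S_{0,2g+2},p) \cong \SBK(S_g,\{p_1,p_2\})$, and by Lemma~\ref{lem:action} each generator $\zeta_i$ maps to the half-twist $h_i$ about the arc $\beta_i$. So it suffices to show that every element of $\ker\epsilon$, written as a word in the $h_i^{\pm1}$, is a product of Dehn twists about symmetric separating curves.

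First I would isolate the basic building block. Consider the element $\zeta_i \zeta_j^{-1}$ (or more generally $\zeta_i^{\alpha}\zeta_j^{-\alpha}$) for $i \neq j$: a regular neighborhood of $\beta_i \cup \beta_j$ in $(S_g,\{p_1,p_2\})$ is an annulus containing the two marked points, and the relevant curves $\beta_i,\beta_j$ together with their symmetric images sit inside it. The key local lemma is that $h_i h_j^{-1}$ equals a single Dehn twist about the boundary curve of this neighborhood—equivalently, that the product of two half-twists about arcs sharing both endpoints is a Dehn twist about the curve obtained by joining the arcs. That boundary curve is separating (it bounds the two-marked-point annulus on one side) and can be taken symmetric, since each $\beta_i$ was chosen $s$-invariant and the neighborhoods $N_i$ were chosen $s$-invariant. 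This gives a distinguished generating family of symmetric-separating-twist elements.

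Next I would verify that $\ker\epsilon$ is generated by such products. Using the homomorphism $F_{2g+1}^{even}\to\Z$ sending each $\zeta_i\mapsto 1$, one first reduces a general even word $\zeta_{i_1}^{\alpha_1}\cdots\zeta_{i_k}^{\alpha_k}$ to length two by repeatedly peeling off pairs; the constraint that the total image under $\epsilon$ vanishes lets one express the word, after conjugation, as a product of elements of the form $w(\zeta_i\zeta_j^{-1})w^{-1}$. Conjugating a Dehn twist about a symmetric separating curve by any element of $\SMod(S_g,\{p_1,p_2\})$ again yields a Dehn twist about a symmetric separating curve (conjugation sends symmetric separating curves to symmetric separating curves, since the relevant mapping classes commute with $\sigma$ and preserve the separating/genus type). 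So the whole of $\SIBK(S_g,\{p_1,p_2\})$ is generated by Dehn twists about symmetric separating curves, and a product of such twists is again such a product. The low-genus cases $g\in\{0,1\}$ need a brief separate remark, since Theorem~\ref{thm:algchar} is stated for $g\ge 1$ and $g=0$ must be checked directly (there $\SIBK$ is trivial or easily handled).

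The main obstacle I anticipate is the local lemma identifying $h_i h_j^{-1}$—or the appropriate combination of half-twists—with an honest Dehn twist about a \emph{symmetric} separating curve, rather than merely with some element acting trivially on homology. One must track the curve carefully through the Birman--Hilden double cover: the half-twists $h_i,h_j$ are the lifts of half-twists in the disk $D_{2g+1}$ interchanging punctures, and the product of two such half-twists downstairs is a Dehn twist whose lift upstairs is the Dehn twist about the symmetric separating curve that double-covers it. Getting the symmetry and separating properties right, and checking that an arbitrary element of $\ker\epsilon$ decomposes into conjugates of these, is where the real work lies; the rest is bookkeeping with the word $\epsilon$ and the linear action formula of Lemma~\ref{lem:action}.
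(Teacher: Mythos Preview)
There is a genuine gap in your building block. The element $\zeta_i\zeta_j^{-1}$ does not lie in $\ker\epsilon$: by the definition of $\epsilon$ one has $\epsilon(\zeta_i\zeta_j^{-1})=e_i-e_j\neq 0$ for $i\neq j$. Equivalently, Lemma~\ref{lem:action} gives $(\zeta_i\zeta_j^{-1})_\star[\beta_k]=[\beta_k]-2[\beta_i]+2[\beta_j]\neq[\beta_k]$, so $h_ih_j^{-1}$ is not in $\SI(S_g,\{p_1,p_2\})$. Since every Dehn twist about a separating curve acts trivially on $H_1(S_g,\{p_1,p_2\};\Z)$ (the class of a separating curve is zero), your ``key local lemma'' that $h_ih_j^{-1}$ is a Dehn twist about a symmetric separating curve is false. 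Even if one could express each element of $\ker\epsilon$ as a product of conjugates of the $\zeta_i\zeta_j^{-1}$, the individual factors would lie outside $\SIBK(S_g,\{p_1,p_2\})$, and there is no mechanism in your outline for regrouping them into symmetric-separating twists.

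The paper's argument uses a different normal generating set. The only length-two words actually in $\ker\epsilon$ are the $\zeta_i^{\pm 2}$, and indeed $h_i^2$ is the Dehn twist about the symmetric separating boundary of a neighborhood of $\beta_i$. But these squares alone do not suffice: since $F_{2g+1}^{even}/\ker\epsilon\cong\Z^{2g}$ is abelian, $\ker\epsilon$ contains the entire commutator subgroup of $F_{2g+1}^{even}$. Via a presentation of $\Z^{2g+1}_{bal}$ (Lemmas~\ref{lem:im ep} and~\ref{lem:zbalpres}) one finds that $\ker\epsilon$ is normally generated by the $\zeta_i^2$ together with the commutators $[\zeta_i\zeta_1,\zeta_j\zeta_1]$. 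Each such commutator is supported in a symmetric subsurface of type $(S_1^1,\{p_1,p_2\})$, and the paper then invokes the independently established fact (Proposition~\ref{prop:g12}) that $\SI(S_1^1,\{p_1,p_2\})$ is generated by symmetric-separating twists. This genus-one input is the essential missing ingredient in your proposal.
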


Theorem~\ref{thm:algchar} identifies $\SIBK(S_g,\{p_1,p_2\})$ with $\ker \epsilon$.  It is a general fact from combinatorial group theory that the kernel of a homomorphism is normally generated by elements that map to the defining relators for the image of the homomorphism.  We aim to exploit this fact, and so we start by determining the image of $\epsilon$.

Let $\Z^{2g+1} \to \Z$ be the map that records the sum of the coordinates, and let $\Z^{2g+1}_{bal}$ be the kernel.

\begin{lem}
\label{lem:im ep}
Let $g \geq 0$.  The image of $F_{2g+1}^{even}$ under $\epsilon$ is $\Z^{2g+1}_{bal}$.
\end{lem}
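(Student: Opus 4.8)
The plan is to show both inclusions $\epsilon(F_{2g+1}^{even}) \subseteq \Z^{2g+1}_{bal}$ and $\Z^{2g+1}_{bal} \subseteq \epsilon(F_{2g+1}^{even})$. For the first inclusion, I would simply inspect the definition of $\epsilon$: a general element of $F_{2g+1}^{even}$ has the form $\zeta_{i_1}^{\alpha_1}\cdots\zeta_{i_k}^{\alpha_k}$ with $k$ even, and it maps to $\sum_{j=1}^k (-1)^{j+1} e_{i_j}$. Summing the coordinates of this vector gives $\sum_{j=1}^k (-1)^{j+1}$, which is $0$ because $k$ is even. Hence the image lands in $\Z^{2g+1}_{bal}$.

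For the reverse inclusion, I would exhibit explicit preimages of a spanning set of $\Z^{2g+1}_{bal}$. The lattice $\Z^{2g+1}_{bal}$ is spanned by the vectors $e_i - e_j$ for $i \neq j$; in fact it is already spanned by the $2g$ vectors $e_i - e_{i+1}$ for $1 \le i \le 2g$, but any spanning set will do. For each such vector $e_i - e_j$, the length-two word $\zeta_i \zeta_j \in F_{2g+1}^{even}$ satisfies $\epsilon(\zeta_i\zeta_j) = e_i - e_j$ directly from the definition. Since these vectors generate $\Z^{2g+1}_{bal}$ and $\epsilon$ is a homomorphism with image a subgroup, the image contains all of $\Z^{2g+1}_{bal}$.

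Combining the two inclusions gives $\epsilon(F_{2g+1}^{even}) = \Z^{2g+1}_{bal}$, as claimed. There is no real obstacle here: the only point requiring a moment's care is bookkeeping the signs in the definition of $\epsilon$ so that the alternating sum of coefficients of a word of even length is correctly seen to vanish, and so that $\zeta_i\zeta_j$ is verified to map to $e_i - e_j$ rather than to $e_j - e_i$ or $-(e_i - e_j)$. Note the case $g = 0$ is degenerate but still valid: $F_1 \cong \Z$, $F_1^{even} = 2\Z$, $\epsilon$ sends the generator of $2\Z$ to $0 \in \Z^1$, and $\Z^1_{bal} = 0$, so both sides are trivial.
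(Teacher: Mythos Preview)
Your proof is correct and follows essentially the same approach as the paper: both show the image lands in $\Z^{2g+1}_{bal}$ directly from the definition, and both hit all of $\Z^{2g+1}_{bal}$ by observing that $\epsilon(\zeta_i\zeta_j)=e_i-e_j$ and that these vectors generate $\Z^{2g+1}_{bal}$. The only difference is that the paper spells out the last generation claim via a short induction on a height function (sum of absolute values of coordinates), whereas you take it as a standard fact about the root lattice; this is a cosmetic difference, not a substantive one.
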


\begin{proof}

It follows immediately from the definition of the map $\epsilon$ that $\epsilon(F_{2g+1}^{even})$ lies in $\Z^{2g+1}_{bal}$.  To show that $\epsilon(F_{2g+1}^{even})$ is all of $\Z^{2g+1}_{bal}$, it suffices to show that $\Z^{2g+1}_{bal}$ is generated by the elements $\epsilon(\zeta_i\zeta_j)=e_i-e_j$, where $e_i$ is a generator the $i$th factor of $\Z^{2g+1}$.  We denote the element $e_i-e_j$ by $e_{i,j}$.

Let $\Z^{2g+1} \to \Z$ be the function that records the sum of the absolute values of the coordinates.  We think of this function as a height function.  The only element of $\Z^{2g+1}$ at height zero is the identity, which is the image of the identity element of $F_{2g+1}^{even}$.  Let $z$ be an arbitrary nontrivial element of $\Z^{2g+1}_{bal}$.  Since $z$ is nontrivial, it has at least one nonzero component, say the ${m}$th.  By the definition of $\Z^{2g+1}_{bal}$, there must be one component, say the $j$th, with opposite sign.  Say the ${m}$th component is negative and the $j$th component is positive.  The sum $\epsilon(\zeta_i\zeta_j)+z$ has height strictly smaller than that of $z$, so by induction the lemma is proven.
\end{proof}

\begin{lem}
\label{lem:zbalpres}
Let $g \geq 0$.  The group $\Z^{2g+1}_{bal}$ has a presentation:
\[ \langle e_{1,1}, \dots, e_{2g+1,2g+1} , e_{2,1}, \dots, e_{2g+1,1} \ |\  e_{i,i}=1, [e_{i,1},e_{j,1}]=1 \rangle. \]
\end{lem}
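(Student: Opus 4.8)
The plan is to recognize both the group presented on the right-hand side and the target group $\Z^{2g+1}_{bal}$ as free abelian of rank $2g$, and then to check that the evident map between them matches up free bases.

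First I would pin down the structure of $\Z^{2g+1}_{bal}$. The homomorphism $\Z^{2g+1}\to\Z^{2g}$ that forgets the first coordinate restricts to an isomorphism $\Z^{2g+1}_{bal}\stackrel{\cong}{\to}\Z^{2g}$, whose inverse sends $(b_2,\dots,b_{2g+1})$ to $(-\sum_{i\geq 2} b_i,\, b_2,\dots,b_{2g+1})$. Under this isomorphism the elements $e_{i,1}=e_i-e_1$, for $i=2,\dots,2g+1$, go to the standard basis vectors of $\Z^{2g}$. Hence $\{e_{i,1} : 2\leq i\leq 2g+1\}$ is a free basis of $\Z^{2g+1}_{bal}$, and in particular $\Z^{2g+1}_{bal}$ is free abelian of rank $2g$.

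Next I would simplify the presented group, call it $G$. None of the generators $e_{1,1},\dots,e_{2g+1,2g+1}$ occurs in any relator other than $e_{i,i}=1$, so a sequence of Tietze transformations deletes all of them, leaving the presentation $\langle e_{2,1},\dots,e_{2g+1,1}\mid [e_{i,1},e_{j,1}]=1\rangle$. This is the standard presentation of the free abelian group on the $2g$ symbols $e_{2,1},\dots,e_{2g+1,1}$, so $G$ is free abelian of rank $2g$ with these as a free basis. Finally, the assignment $e_{i,1}\mapsto e_i-e_1$ (and, forced, $e_{i,i}\mapsto 0$) respects all the relations of $G$ and so defines a homomorphism $\phi\colon G\to\Z^{2g+1}_{bal}$; by the previous paragraph it carries the free basis $\{e_{i,1}\}$ of $G$ bijectively onto the free basis $\{e_i-e_1\}$ of $\Z^{2g+1}_{bal}$, hence is an isomorphism.

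There is no real obstacle here, since this is a routine presentation manipulation; the only care needed is in the bookkeeping, namely confirming that $\{e_i-e_1 : i\geq 2\}$ is genuinely a free basis of $\Z^{2g+1}_{bal}$, so that $\phi$ is injective and not merely surjective, and noting that the diagonal generators $e_{i,i}$ are redundant. The reason for carrying the redundant generators $e_{i,i}$ at all is to set up the following subsection, where one wants each relator of $\Z^{2g+1}_{bal}$, both the $e_{i,i}$ and the commutators $[e_{i,1},e_{j,1}]$, to be realized explicitly as $\epsilon$ of a word in $F_{2g+1}^{even}$, in order to apply the fact that $\ker\epsilon$ is normally generated by lifts of these relators.
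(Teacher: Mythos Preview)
Your proof is correct and follows essentially the same approach as the paper's: both identify $\Z^{2g+1}_{bal}$ with $\Z^{2g}$ via forgetting the first coordinate, observe that $\{e_{i,1}:2\le i\le 2g+1\}$ is a free basis, and use Tietze transformations to handle the redundant generators $e_{i,i}$. The only cosmetic difference is that the paper starts from the free abelian presentation and adds the $e_{i,i}$, whereas you start from the given presentation and delete them.
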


\begin{proof} 

Since $\Z^{2g+1}_{bal}$ is the subgroup of $\Z^{2g+1}$ described by one linear equation (the sum of the coordinates is 0), we see that $\Z^{2g+1}_{bal} \cong \Z^{2g}$.  Denote by $\eta$ the isomorphism given $\Z^{2g+1}_{bal} \to \Z^{2g}$ given by forgetting the first coordinate.

The group $\Z^{2g}$ is the free abelian group on $\eta(e_{2,1}), \dots, \eta(e_{2g+1,1})$, and so it has a presentation whose generators are $\eta(e_{2,1}), \dots, \eta(e_{2g+1,1})$ and whose relations are $[\eta(e_{i,1}),\eta(e_{j,1})]=1$.  

We thus obtain a presentation for $\Z^{2g+1}_{bal}$ with generators $e_{2,1}, \dots, e_{2g+1,1}$ and relations $[e_{i,1},e_{j,1}]=1$.  If we add (formal) generators $e_{i,i}$ to this presentation, as well as relations $e_{i,i}=1$, we obtain a new presentation for the same group; this is an elementary Tietze transformation \cite[Section 1.5]{mks}.
\end{proof}

We are now ready to prove Theorem~\ref{thm:topchar}.

\begin{proof}[Proof of Theorem~\ref{thm:topchar}]

Since $\SI(S^2,\{p_1,p_2\})=1$, we may assume $g \geq 1$.  By Lemma~\ref{lem:im ep}, we have a short exact sequence:
\[
1 \to \ker \epsilon \to F_{2g+1}^{even} \stackrel{\epsilon}{\to} \Z^{2g+1}_{bal} \to 1
\] 
where $\epsilon(\zeta_i \zeta_j) = e_{i,1}$ and $\epsilon(\zeta_i^2) = e_{i,i}=0$.

Consider the presentation for $\Z^{2g+1}_{bal}$ given in Lemma~\ref{lem:zbalpres}.  If we lift each relator in this presentation to an element of $F_{2g+1}^{even}$, we obtain a normal generating set for $\ker \epsilon$, that is, these elements and their conjugates in $F_{2g+1}^{even}$ generate $\ker \epsilon$.  The relators $e_{i,i}$ and $[e_{i,1},e_{j,1}]$ lift to elements
\[ \zeta_i^2 \quad \mbox{and} \quad [\zeta_i\zeta_1,\zeta_j\zeta_1],\]
respectively.

Passing through the isomorphism $F_{2g+1} \to \SBK(S_g,\{p_1,p_2\})$ from Theorem~\ref{thm:sbes2}, and applying Theorem~\ref{thm:algchar} we obtain a normal generating set for $\SIBK(S_g,\{p_1,p_2\})$.

Since the group generated by Dehn twists about symmetric separating curves is normal in $\SMod(S_g,\{p_1,p_2\})$ (hence in $\SBK(S_g,\{p_1,p_2\})$), it remains to show that the image of each $\zeta_i^2$ and $[\zeta_i\zeta_1,\zeta_j\zeta_1]$ in the group $\SBK(S_g,\{p_1,p_2\})$ can be written as a product of Dehn twists about symmetric separating curves.

To further simplify matters, the image of each $\zeta_i^2$ in $\SBK(S_g,\{p_1,p_2\})$ is conjugate to $\zeta_1^2$ in $\SMod(S_g,\{p_1,p_2\})$, and (up to taking inverses) the image of each $[\zeta_i\zeta_1,\zeta_j\zeta_1]$ is conjugate to $[\zeta_3\zeta_1,\zeta_2\zeta_1]$ in $\SMod(S_g,\{p_1,p_2\})$ (the point is that there are elements of $\Mod(S_{0,2g+2}, p)$ taking the elements $\zeta_1^2$ and $[\zeta_3\zeta_1,\zeta_2\zeta_1]$ of $\pi_1(S_{0,2g+2},p)$ to the other given elements).  Thus, we are reduced to checking that the images in $\SBK(S_g,\{p_1,p_2\})$ of $\zeta_1^2$ and $[\zeta_3\zeta_1,\zeta_2\zeta_1]$ are both products of Dehn twists about symmetric separating curves.

In the proof of Lemma~\ref{lem:action}, we showed that the image of $\zeta_1$ in the group $\SBK(S_g,\{p_1,p_2\})$ is a half-twist about the arc $\beta_1$.  It follows that the image of $\zeta_1^2$ is the Dehn twist about the boundary of a regular neighborhood of $\beta_1$.  This boundary is (isotopic to) a symmetric separating curve in $(S_g,\{p_1,p_2\})$.

It remains to analyze the element $[\zeta_3\zeta_1,\zeta_2\zeta_1]$.  There is a closed disk in $S_{0,2g+2}$ that contains the distinguished marked point $p$, the 1st, 2nd, and 3rd marked points of $S_{0,2g+2}$, and a representative of $[\zeta_3\zeta_1,\zeta_2\zeta_1] \in \pi_1(S_{0,2g+2},p)$.  Under the isomorphism $F_{2g+1} \to \SBK(S_g,\{p_1,p_2\})$ from Theorem~\ref{thm:sbes2}, we see that the commutator $[\zeta_3\zeta_1,\zeta_2\zeta_1]$ maps to an element of $\SI(S_g,\{p_1,p_2\})$ supported on a copy of $(S_1^1,\{p_1,p_2\})$ fixed by $s$.  Proposition~\ref{prop:g12} below states that $\SI(S_1^1,\{p_1,p_2\})$ is generated by Dehn twists about symmetric separating curves.  Combining this with the fact that the inclusion $(S_1^1,\{p_1,p_2\}) \to (S_g,\{p_1,p_2\})$ takes symmetric separating curves to symmetric separating curves, we conclude that $[\zeta_3\zeta_1,\zeta_2\zeta_1]$ is a product of Dehn twists about separating curves, and we are done.
\end{proof}

In the proof of Theorem~\ref{thm:topchar}, we used the following fact.

\begin{prop}
\label{prop:g12}
The group $\SI(S_1^1,\{p_1,p_2\})$ is generated by Dehn twists about symmetric separating curves.
\end{prop}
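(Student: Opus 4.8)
The plan is to combine the structural results established so far with a single explicit computation in genus one. First I would reduce the statement to a question about $\ker\epsilon$. By the two--marked--point analogue of Theorem~\ref{thm:uncapping} (set up precisely for this purpose just before Section~\ref{siforget2}), there is an isomorphism $\SI(S_1^1,\{p_1,p_2\}) \cong \SI(S_1,\{p_1,p_2\}) \times \langle T_{\partial S_1^1}\rangle$, and $T_{\partial S_1^1}$ is a Dehn twist about a symmetric separating curve. Since $\SMod(S_1)=\Mod(S_1)$ (Fact~\ref{fact:low genus}) and the Torelli group of the torus is trivial, we have $\SI(S_1)=1$, so the split exact sequence of Theorem~\ref{thm:algchar} collapses and identifies $\SI(S_1,\{p_1,p_2\})$ with $\SIBK(S_1,\{p_1,p_2\})$, which is $\ker\epsilon$ sitting inside $F_3=\SBK(S_1,\{p_1,p_2\})$ for $\epsilon:F_3^{even}\to\Z^3$. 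Thus it suffices to show that, as a subgroup of $\SBK(S_1,\{p_1,p_2\})$, the group $\ker\epsilon$ is generated by Dehn twists about symmetric separating curves.

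Next I would produce a normal generating set. Repeating the combinatorial part of the proof of Theorem~\ref{thm:topchar} in the case $g=1$ — everything preceding the appeal to Proposition~\ref{prop:g12} there uses only Lemmas~\ref{lem:im ep} and~\ref{lem:zbalpres}, so there is no circularity — we see that $\ker\epsilon$ is generated by the $F_3^{even}$--conjugates of $\zeta_1^2,\zeta_2^2,\zeta_3^2$ and of $[\zeta_2\zeta_1,\zeta_3\zeta_1]$, and that up to conjugacy in $\SMod(S_1,\{p_1,p_2\})$ (using permutations of the four branch points, realized in $\Mod(S_{0,4},p)$) this list reduces to $\zeta_1^2$ and $[\zeta_2\zeta_1,\zeta_3\zeta_1]$. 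Since the subgroup $H\leq\SMod(S_1,\{p_1,p_2\})$ generated by Dehn twists about symmetric separating curves is normal — conjugating such a twist by a mapping class commuting with $s$ again gives a Dehn twist about a symmetric separating curve — it is enough to check that the images of $\zeta_1^2$ and of $[\zeta_2\zeta_1,\zeta_3\zeta_1]$ in $\SBK(S_1,\{p_1,p_2\})$ lie in $H$.

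For $\zeta_1^2$ this is immediate: by the computation in the proof of Lemma~\ref{lem:action}, $\zeta_1$ maps to the half--twist about the arc $\beta_1$, so $\zeta_1^2$ maps to $T_{\partial N(\beta_1)}$, and $\partial N(\beta_1)$ bounds a disk containing $p_1$ and $p_2$, hence is symmetric and separating (and the twist is routinely seen to lie in $\SI(S_1,\{p_1,p_2\})$). The substance of the proof is the commutator. Writing $\tau_i$ for the half--twist about $\beta_i$, the image of $[\zeta_2\zeta_1,\zeta_3\zeta_1]$ is $[\tau_2\tau_1,\tau_3\tau_1]$, a mapping class supported on the union of symmetric regular neighborhoods of the three pairwise--disjoint (apart from endpoints) arcs $\beta_1,\beta_2,\beta_3$ of Figure~\ref{figure:zetadots}, equivalently on a symmetric copy of $(S_1^1,\{p_1,p_2\})$ inside $(S_1,\{p_1,p_2\})$. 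Here I would compute $[\tau_2\tau_1,\tau_3\tau_1]$ directly by an explicit isotopy in this small surface — tracking the images of a spanning collection of arcs, or, dually, working downstairs in $(S_{0,4},p)$ with the point--pushing formula $\Push(\gamma)=T_{\gamma_L}T_{\gamma_R}^{-1}$ and lifting through Birman--Hilden — and show that it equals a product of Dehn twists about finitely many symmetric separating curves, presumably a short product recognizable as an instance of a standard relation (a lantern or chain relation) in the genus--one piece. Granting this, every normal generator of $\ker\epsilon$ lies in $H$, so $\ker\epsilon\leq H$, which with the reduction step finishes the proof.

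The main obstacle is precisely this last step. The commutator $[\zeta_2\zeta_1,\zeta_3\zeta_1]$ is a nontrivial commutator of two of the free generators of $F_3^{even}$, so it is \emph{not} contained in the normal closure of $\zeta_1^2,\zeta_2^2,\zeta_3^2$; there is therefore no formal shortcut, and its decomposition into Dehn twists about symmetric separating curves must be exhibited by hand. Producing and verifying the correct picture in $(S_1^1,\{p_1,p_2\})$ is where the real work of the proposition lies, while everything else is bookkeeping with the Birman exact sequences and the Birman--Hilden theorem.
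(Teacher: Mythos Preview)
Your first reduction step matches the paper's: both invoke the splitting $\SI(S_1^1,\{p_1,p_2\}) \cong \SI(S_1,\{p_1,p_2\}) \times \langle T_{\partial S_1^1}\rangle$ to reduce to the closed case. After that the two arguments diverge. The paper observes that in this low-genus situation $\SMod(S_1,\{p_1,p_2\}) = \Mod(S_1,\{p_1,p_2\})$, so $\SI(S_1,\{p_1,p_2\})$ coincides with the ordinary Torelli group $\I(S_1,\{p_1,p_2\})$ and every separating curve is automatically (isotopic to) a symmetric one. It then simply cites the known fact that $\I(S_1,\{p_1,p_2\})$ is generated by Dehn twists about separating curves. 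No commutator is ever computed.

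Your route through $\ker\epsilon$ is not circular, as you carefully note, but it has a genuine gap: you reduce everything to factoring $[\tau_2\tau_1,\tau_3\tau_1]$ as a product of twists about symmetric separating curves and then do not produce such a factorization. That is not a minor detail deferred. In the paper's architecture, Proposition~\ref{prop:g12} is invoked in the proof of Theorem~\ref{thm:topchar} precisely to dispose of this very commutator, so a proof of the proposition that bottoms out in the same unperformed computation has not supplied the missing ingredient. The paper's observation that ``symmetric'' is a vacuous condition on $(S_1,\{p_1,p_2\})$ is exactly what lets one replace your explicit calculation by an appeal to known generators of the ordinary Torelli group of the twice-marked torus.
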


\begin{proof}

By the discussion after Theorem~\ref{thm:uncapping}, we have
\[ \SI(S_1^1,\{p_1,p_2\}) \cong \SI(S_1,\{p_1,p_2\}) \times \Z, \]
where the $\Z$ factor is the Dehn twist about $\partial S_1^1$.  Therefore, it suffices to show that $\SI(S_1,\{p_1,p_2\})$ is generated by Dehn twists about symmetric separating curves in $(S_1,\{p_1,p_2\})$.  This follows immediately from the fact that $\SMod(S_1,\{p_1,p_2\})=\Mod(S_1,\{p_1,p_2\})$ \cite[Section 3.4]{primer} and the fact that $\I(S_1,\{p_1,p_2\})$ is generated by Dehn twists about separating curves (this can be proven directly via the argument of \cite[Lemma 7.2]{bbm}, or it can be obtained immediately by combining \cite[Lemma 7.2]{bbm} with Lemma 5.8 below).
\end{proof}


\section{Application to generating sets}
\label{sec:app}

Recall that Hain has conjectured that $\SI(S_g)$ is generated by Dehn twists about symmetric separating curves.  Since $\SI(S^2)$ and $\SI(T^2)$ are trivial, there is nothing to do for those cases.

\p{Genus two} Hain's conjecture is also known to be true in genus two.  Indeed, it follows from Fact~\ref{fact:low genus} that
\[ \SI(S_2) = \I(S_2).\]
A theorem of Birman and Powell gives that $\I(S_2)$ is generated by Dehn twists about separating curves \cite{birmansp,powell}.  It follows that Hain's conjecture is true for $\SI(S_2)$.  Applying Theorems~\ref{thm:sibes1} and~\ref{thm:uncapping}, we obtain the following extension. 

\begin{thm}
\label{thm:g21}
The groups $\SI(S_2)$, $\SI(S_2,p)$, and $\SI(S_2^1)$ are each generated by Dehn twists about symmetric separating curves.
\end{thm}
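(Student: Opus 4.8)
The plan is to derive Theorem~\ref{thm:g21} as a quick consequence of the structural results already established, starting from the known genus-two case of Hain's conjecture. First I would record that $\SI(S_2) = \I(S_2)$ by Fact~\ref{fact:low genus}, and that $\I(S_2)$ is generated by Dehn twists about separating curves by the Birman--Powell theorem; since every separating curve in a closed genus-two surface is symmetric (again by Fact~\ref{fact:low genus}, no non-symmetric curves exist, and more concretely every genus-one separating curve is isotopic to one fixed by $s$), this handles $\SI(S_2)$.

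Next I would treat $\SI(S_2,p)$. By Theorem~\ref{thm:sibes1}, the forgetful map $\SI(S_2,p) \to \SI(S_2)$ is an isomorphism. One then needs to check that, under this isomorphism, a generating set of $\SI(S_2)$ by Dehn twists about symmetric separating curves pulls back to Dehn twists about symmetric separating curves in $(S_2,p)$. Concretely, given a symmetric separating curve $c$ in $S_2$, one isotopes $c$ off of $p$ (possible since $c$ is separating and $p$ is a single point, and one can do this $s$-equivariantly since $p$ is a fixed point of $s$); then $T_c$, viewed in $\Mod(S_2,p)$, lies in the image of the forgetful map and maps to $T_c \in \SI(S_2)$, and $c$ remains symmetric and separating in $(S_2,p)$. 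So the isomorphism carries the Birman--Powell generators to twists of the desired type.

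Finally, for $\SI(S_2^1)$ I would invoke Theorem~\ref{thm:uncapping}, which gives
\[ \SI(S_2^1) \cong \SI(S_2) \times \langle T_{\partial S_2^1} \rangle. \]
The boundary curve $\partial S_2^1$ is itself a symmetric separating curve (it bounds the $s$-invariant disk $\Delta$ that was capped off), so $T_{\partial S_2^1}$ is a Dehn twist about a symmetric separating curve. It therefore suffices to see that the $\SI(S_2)$ factor is generated by Dehn twists about symmetric separating curves that live on $S_2^1$. Tracing through the proof of Theorem~\ref{thm:uncapping}: that factor is the image of the splitting $\SI(S_2) \hookrightarrow \SI(S_2^1)$, which is built from the identification $\SI(S_2^1)/\langle T_{\partial S_2^1}\rangle \cong \SI(S_2,p)$ and the isomorphism $\SI(S_2,p) \cong \SI(S_2)$. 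A generator $T_c$ (with $c$ symmetric separating in $S_2$, isotoped off $p$ as above) lifts to $T_{\tilde c} \in \SI(S_2^1)$ where $\tilde c$ is a symmetric separating curve in $S_2^1$ obtained by isotoping $c$ into the subsurface $S_2^1 = S_2 - \Delta$; this is possible because $c$ can be taken disjoint from the $s$-invariant disk $\Delta$. Hence every element of $\SI(S_2^1)$ is a product of $T_{\partial S_2^1}^{\pm 1}$ and such $T_{\tilde c}^{\pm 1}$, all Dehn twists about symmetric separating curves.

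The main obstacle is not any deep argument but the bookkeeping in the last two paragraphs: one must be careful to verify that the curves can be isotoped away from the marked point $p$ (respectively, into the subsurface $S_2^1$) \emph{$s$-equivariantly}, so that symmetry is preserved, and that the forgetful/capping maps genuinely carry these twists to the Birman--Powell generators rather than merely to elements that happen to act trivially on homology. Since $p$ is a fixed point of $s$ and $\Delta$ is $s$-invariant, a symmetric genus-one separating curve can always be chosen disjoint from a small $s$-invariant neighborhood of $p$ (or of $\Delta$), which makes these isotopies routine, so no genuine difficulty arises.
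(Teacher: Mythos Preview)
Your proposal is correct and follows essentially the same route as the paper: establish the result for $\SI(S_2)$ via Fact~\ref{fact:low genus} and Birman--Powell, then transport it to $\SI(S_2,p)$ and $\SI(S_2^1)$ using Theorems~\ref{thm:sibes1} and~\ref{thm:uncapping}. The paper's proof is a one-sentence reference to those theorems; you have filled in the bookkeeping (that generators lift to twists of the same type) that the paper leaves implicit. One small simplification: your worry about $s$-equivariantly isotoping a symmetric separating curve off $p$ is unnecessary, since an $s$-invariant separating curve in $S_2$ automatically misses every fixed point of $s$ (the involution preserves each side, hence acts on the curve by a free rotation), so no isotopy is needed.
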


\p{Higher genus}  We now aim to apply Theorem~\ref{thm:topchar} in order to prove Theorem~\ref{thm:reducibles}, which states that, in order to prove Hain's conjecture, it is enough to show that $\SI(S_g)$ is generated by reducible elements.  To prove this theorem, we assume that $\SI(S_k)$ is generated by reducible elements for $k \leq g$, and we show that each reducible element of $\SI(S_g)$ is generated by Dehn twists about symmetric separating curves.

We say that an element of $\Mod(S_g)$ is \emph{strongly reducible} if it fixes the isotopy class of a simple closed curve in $S_g$.  We have the following theorem of Ivanov \cite[Theorem 3]{ivanov}.

\begin{thm}
\label{thm:pure}
Let $g \geq 0$.  If $f \in \I(S_g)$ is reducible, then $f$ is strongly reducible.
\end{thm}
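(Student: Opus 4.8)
The plan is to argue by contradiction using the canonical reduction system, with one homological observation doing all of the work. First one disposes of the periodic case: a finite-order element of $\Mod(S_g)$ acting trivially on $H_1(S_g;\Z)$ is trivial when $g\ge 1$, and $\I(S_g)$ is trivial for $g\le 1$, so I may assume $f\in\I(S_g)$ is reducible and not periodic; then its canonical reduction system $C$ is a nonempty collection of isotopy classes of pairwise disjoint, pairwise non-isotopic, essential simple closed curves, and $f$ permutes $C$. Assume, for contradiction, that $f$ fixes no curve in $C$, so every $f$-orbit in $C$ has size $\ge 2$. The key lemma I would isolate is: \emph{if $\Sigma\subset S_g$ is an essential subsurface of positive genus, then $f$ cannot carry $\Sigma$ to a subsurface whose interior is disjoint from that of $\Sigma$.} This is immediate from $f_\star=\mathrm{id}$ on $H_1(S_g;\Z)$: choose simple closed curves $x,y$ in the interior of $\Sigma$ with algebraic intersection number $1$; then $[f(x)]=[x]$, so the algebraic intersection of $[f(x)]$ with $[y]$ is $1$, whereas $f(x)$ and $y$ lie in disjoint open subsurfaces and hence have intersection number $0$.

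Now fix an $f$-orbit $\{c_1,\dots,c_k\}$ with $c_{i+1}=f(c_i)$ (indices mod $k$); the $c_i$ all have the same topological type, and since $f_\star=\mathrm{id}$ they are pairwise homologous. Suppose first $k\ge 3$. Let $\Sigma_1$ be the component of $S_g\setminus(c_1\cup c_2)$ — when the $c_i$ are non-separating, in which case $c_1\cup c_2$ separates $S_g$ — or of $S_g\setminus c_1$ — when the $c_i$ are separating — whose interior does not contain $c_3$. Then $\Sigma_1$ has positive genus, since otherwise it would be planar with boundary among the pairwise non-isotopic $c_i$, hence an annulus, forcing two of the $c_i$ to be isotopic. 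By construction the interior of $\Sigma_1$ is disjoint from $c_2$ and from $c_3$, hence from $\partial f(\Sigma_1)=f(\partial\Sigma_1)$, which is supported on $c_2\cup c_3$ (resp.\ on $c_2$); therefore $\Sigma_1$ lies entirely on one side of $\partial f(\Sigma_1)$, and by the key lemma that side must be the one equal to $f(\Sigma_1)$, so $\Sigma_1\subseteq f(\Sigma_1)$. The containment is proper, for if $f(\Sigma_1)=\Sigma_1$ then $f$ permutes the boundary curves of $\Sigma_1$, which are curves of the orbit, and having no fixed curve forces $f$ to reverse the common nonzero homology class of the $c_i$ (non-separating case) or forces $f(c_1)=c_1$ (separating case), either way a contradiction. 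But $f(\Sigma_1)\cong\Sigma_1$, so $\chi(f(\Sigma_1))=\chi(\Sigma_1)$, while the closure $R$ of $f(\Sigma_1)\setminus\Sigma_1$ is a nonempty subsurface whose boundary lies among pairwise non-isotopic curves, hence is neither a disk nor an annulus, so $\chi(R)<0$ and $\chi(f(\Sigma_1))=\chi(\Sigma_1)+\chi(R)<\chi(\Sigma_1)$ — a contradiction. The degenerate case $k=2$ uses the same two tools: if the two curves are non-separating, either $f$ preserves a complementary piece and swaps its two boundary curves (reversing their common nonzero class, contradicting $f_\star=\mathrm{id}$) or $f$ swaps the two complementary pieces, which are disjoint of positive genus (contradicting the key lemma); if they are separating, the side-choice and Euler-characteristic argument goes through verbatim.

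I expect the delicate point to be the containment step — knowing that $\Sigma_1$ genuinely sits inside $f(\Sigma_1)$ rather than straddling it — and the way around it is precisely the choice of $\Sigma_1$ as the complementary piece avoiding the ``next'' curve $c_3$, which confines $\Sigma_1$ to one side of the curve system bounding $f(\Sigma_1)$; once that is in place the key lemma and the Euler-characteristic bookkeeping finish things off. I would also take care to cite at the outset the existence and $f$-invariance of the canonical reduction system and the triviality of periodic elements of $\I(S_g)$.
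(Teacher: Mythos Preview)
The paper does not actually give a proof of this theorem: it is quoted as a result of Ivanov with a citation to \cite[Theorem 3]{ivanov} and no argument is supplied.  So there is no ``paper's own proof'' to compare against, and your write-up should be judged on its own merits.

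Your strategy is sound and the key homological lemma (a positive-genus subsurface cannot be carried by a Torelli element to one with disjoint interior) is both correct and the heart of the matter.  The nonseparating case ($k\ge 3$, and the separate treatment of $k=2$) is handled correctly: each complementary piece of $c_1\cup c_2$ really has one copy of $c_1$ and one of $c_2$ on its boundary, so genus~$0$ would force an annulus; and the Euler-characteristic bookkeeping $\chi(f(\Sigma_1))=\chi(\Sigma_1)+\chi(R)$ together with ``$R$ has boundary among pairwise non-isotopic essential curves, hence $\chi(R)<0$'' yields the desired contradiction.

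There is, however, a genuine slip in the separating case for $k\ge 3$.  You take $\Sigma_1$ to be the side of $c_1$ that avoids $c_3$, and then assert that $\mathrm{int}(\Sigma_1)$ is disjoint from $c_2$.  That is not guaranteed: $c_2$ and $c_3$ can lie on opposite sides of $c_1$, in which case $c_2\subset\mathrm{int}(\Sigma_1)$ and $\Sigma_1$ straddles $\partial f(\Sigma_1)=c_2$, so the containment step fails.  The fix is immediate: in the separating case choose $\Sigma_1$ to be the side of $c_1$ that avoids $c_2$ (not $c_3$).  Then $\mathrm{int}(\Sigma_1)$ is disjoint from $\partial f(\Sigma_1)=c_2$, the key lemma forces $\Sigma_1\subseteq f(\Sigma_1)$, properness is clear since $c_1\ne c_2$, and the Euler-characteristic argument goes through verbatim (with $\partial R=c_1\cup c_2$).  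This single change also covers $k=2$ in the separating case, so your ``verbatim'' remark there becomes literally true.  With this correction the argument is complete.
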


We say that an isotopy class $a$ of simple closed curves is \emph{pre-symmetric} if it is not symmetric and $\sigma(a)$ and $a$ have disjoint representatives.

\subsection{Reduction to the symmetrically reducible case}
\label{section:reduction}

We say that an element $f$ of $\SMod(S_g)$ is \emph{symmetrically strongly reducible} if there is an isotopy class of simple closed curves in $S_g$ that is either symmetric or pre-symmetric and is fixed by $f$.

We have the following standard fact (see, e.g., \cite[Lemma 2.9]{primer}).  In the statement, we say that two simple closed curves $\alpha$ and $\beta$ are in minimal position if $|\alpha \cap \beta|$ is minimal with respect to the homotopy classes of $\alpha$ and $\beta$.

\begin{lem}
\label{lemma:alexander}
Let $S$ be any compact surface.  Let $\alpha$ and $\beta$ be two simple closed curves in $S$ that are in minimal position and that are not isotopic.  If $\phi : S \to S$ is a homeomorphism that preserves the set of isotopy classes $\{[\alpha],[\beta]\}$, then $\phi$ is isotopic to a homeomorphism that preserves the set $\alpha \cup \beta$.
\end{lem}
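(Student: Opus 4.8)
The plan is to build the required isotopy in two stages, using only the isotopy extension theorem and a standard consequence of the bigon criterion.

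\textbf{Stage 1: normalize $\phi$ on $\alpha$.} Since $\alpha$ and $\beta$ are not isotopic, the homeomorphism $\phi$ either fixes each of $[\alpha],[\beta]$ or interchanges them; in either case $\phi(\alpha)$ is a simple closed curve isotopic to one of $\alpha,\beta$, which we call $\gamma$, and we let $\gamma'$ denote the other, so $\{\gamma,\gamma'\}=\{\alpha,\beta\}$. By the isotopy extension theorem, the isotopy from $\phi(\alpha)$ to $\gamma$ extends to an ambient isotopy of $S$; post-composing $\phi$ with it, we may replace $\phi$ by a homeomorphism in the same isotopy class with $\phi(\alpha)=\gamma$ exactly. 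Set $\delta:=\gamma=\phi(\alpha)$.

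\textbf{Stage 2: normalize $\phi$ on $\beta$ without moving $\delta$.} After Stage 1 we have: (i) $\phi(\beta)$ is isotopic to $\gamma'$, since $\phi$ permutes $\{[\alpha],[\beta]\}$; (ii) $\phi(\beta)$ is in minimal position with $\delta=\phi(\alpha)$, because $\phi$ is a homeomorphism, $\alpha$ and $\beta$ are in minimal position, and post-composing by an ambient isotopy preserves geometric intersection numbers and isotopy classes; and (iii) $\gamma'$ itself is in minimal position with $\delta$, because $\{\delta,\gamma'\}=\{\alpha,\beta\}$. So the two isotopic curves $\phi(\beta)$ and $\gamma'$ are each in minimal position with the fixed curve $\delta$. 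The key input is the standard fact (a consequence of the bigon criterion and the Alexander method, cf.\ \cite[Section~2]{primer}) that two isotopic simple closed curves, each in minimal position with a common fixed curve $\delta$, differ by an ambient isotopy of $S$ that preserves $\delta$ setwise. Applying such an isotopy --- which fixes $\delta$, hence fixes $\phi(\alpha)$ --- and once more replacing $\phi$ by the resulting representative, we obtain $\phi(\alpha)=\delta=\gamma$ and $\phi(\beta)=\gamma'$. Therefore $\phi(\alpha\cup\beta)=\gamma\cup\gamma'=\alpha\cup\beta$, which is what we wanted.

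\textbf{Main obstacle.} The only substantive point is the sublemma invoked in Stage 2. To prove it one cuts $S$ along $\delta$ and studies the arc-and-curve systems cut out by the two isotopic curves; being in minimal position with $\delta$ is equivalent to the cut-open curves having no boundary-parallel arc, and an innermost-bigon argument --- where bigons are chosen innermost with respect to the two curves \emph{and} to $\delta$ simultaneously --- shows the two arc systems are isotopic rel boundary in the cut surface, after which one reassembles the ambient isotopy. The delicate part is keeping each successive bigon-removing isotopy compatible with minimal position with respect to $\delta$; this is routine once the right notion of ``innermost'' is used, and in any case it is the content of the cited result. (Note the hypothesis that $\alpha$ and $\beta$ are not isotopic is used only to make $\{[\alpha],[\beta]\}$ a genuine two-element set, on which $\phi$ acts.)
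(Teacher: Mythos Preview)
The paper does not prove this lemma at all; it simply records it as a standard fact with a pointer to \cite[Lemma~2.9]{primer}. So there is no ``paper's own proof'' to compare against, and your write-up is a genuine addition rather than a duplication.

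Your two-stage argument is the standard one and is correct. Stage~1 is routine isotopy extension. In Stage~2 your verifications (i)--(iii) are right: the key point, which you handle correctly, is that after the Stage~1 ambient isotopy the new $\phi(\beta)$ is still in minimal position with the new $\phi(\alpha)=\gamma$, because a homeomorphism carries minimal-position pairs to minimal-position pairs. The sublemma you invoke --- that two isotopic simple closed curves, each transverse and in minimal position with a fixed curve $\delta$, are ambiently isotopic by an isotopy preserving $\delta$ --- is indeed the heart of the matter; it is essentially equivalent in strength to the lemma itself, so your proof is really a reduction to that statement plus a sketch of its proof. The sketch (cut along $\delta$, observe that minimal position forces no boundary-parallel arcs, then use an innermost-bigon argument on the two arc systems) is the right outline. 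One small refinement worth recording if you want the argument to be fully self-contained: before running the bigon argument in the cut-open surface you must first slide the endpoints of one arc system along $\partial(S\setminus\delta)$ so that the two systems meet the boundary in the same finite set; only then does ``isotopy rel boundary'' make sense.

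Your parenthetical about where non-isotopy of $\alpha$ and $\beta$ is used is accurate: it only serves to make $\{[\alpha],[\beta]\}$ a two-element set so that $\phi$ acts on it by a well-defined permutation.
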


\begin{prop}
\label{prop:reduction}
Let $g \geq 0$.  If $f \in \SI(S_g)$ is strongly reducible, then $f$ is symmetrically strongly reducible.
\end{prop}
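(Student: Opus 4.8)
The plan is to produce the required curve directly from $a$ and its hyperelliptic image $\sigma(a)$. First I would represent $f$ by a homeomorphism commuting with $s$, so that $f$ fixes $[\sigma(a)] = \sigma([a])$ in addition to $[a]$. If $\sigma(a) = a$ then $a$ is symmetric and there is nothing more to prove; if $\sigma(a) \neq a$ but $i(a,\sigma(a)) = 0$, then $a$ is itself pre-symmetric and is fixed by $f$, so again we are done. Hence the real content lies in the case $i(a,\sigma(a)) > 0$.

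In that case I would fix a hyperbolic metric on $S_g$ for which $s$ is an isometry and replace $a$ and $\sigma(a)$ by their geodesic representatives, so that $a \cup \sigma(a)$ is a connected, $s$-invariant subcomplex in minimal position. Let $F$ be the subsurface filled by $a \cup \sigma(a)$. By Lemma~\ref{lemma:alexander}, $f$ is isotopic to a homeomorphism preserving $a \cup \sigma(a)$, hence a regular neighbourhood of it, hence (the isotopy class of) $F$; and $F$ is $s$-invariant up to isotopy by construction. We may assume $f \neq 1$, since the identity is trivially symmetrically strongly reducible; then $F \neq S_g$, because a mapping class fixing $[a]$ and $[\sigma(a)]$ when $a \cup \sigma(a)$ fills must preserve the cell structure that $a \cup \sigma(a)$ induces on $S_g$ and so has finite order, while a finite-order element of $\SI(S_g)$ is trivial. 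Therefore $\partial F$ is a nonempty multicurve of essential curves, invariant as a set of isotopy classes under both $\sigma$ and $f$.

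The next step is to observe that each component $c$ of $\partial F$ is automatically symmetric or pre-symmetric: $\sigma(c)$ is again a component of $\partial F$, so either $\sigma(c)=c$, in which case $c$ is symmetric, or $\sigma(c)\neq c$, in which case $c$ and $\sigma(c)$ are disjoint and $c$ is pre-symmetric. It then remains to show that $f$ fixes — rather than merely permutes — some component of $\partial F$. For this I would use that $f \in \I(S_g)$: a mapping class acting trivially on $H_1(S_g;\Z)$ cannot permute the components of an invariant multicurve nontrivially. Indeed, if $f$ carried a component $c$ to a distinct component $c'$, then $c$ and $c'$ would be disjoint; if $c$ is nonseparating then $c'$ is homologous to it, $c \cup c'$ bounds a subsurface $R$, and comparing the boundary orientations that $R$ induces on $c$ and on $c'$ with the relation $f_\star[c] = [c']$ forces $f_\star[c] = -[c]$, impossible for a nonzero class fixed by $f_\star$; if $c$ is separating one argues similarly, using that $H_1$ of a subsurface bounded by a separating curve injects into $H_1(S_g;\Z)$. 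Hence some component $c$ of $\partial F$ is both symmetric or pre-symmetric and fixed by $f$, so $f$ is symmetrically strongly reducible.

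The step I expect to be the main obstacle is the last one — ruling out a nontrivial permutation of the components of the invariant multicurve $\partial F$. Everything preceding it is formal bookkeeping with Lemma~\ref{lemma:alexander} and the Birman--Hilden picture, but this step is exactly where the Torelli hypothesis is essential, and it requires separating the nonseparating and separating cases (alternatively, one may simply invoke the standard fact that a Torelli mapping class acts trivially on every invariant system of disjoint simple closed curves).
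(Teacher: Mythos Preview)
Your approach is the same as the paper's: both pass from $a$ to the multicurve $\partial F$ (the paper calls it $\mu'$, the essential, non-redundant boundary of a regular neighbourhood of minimal-position representatives of $a$ and $\sigma(a)$), use Lemma~\ref{lemma:alexander} to see that $f$ and $\sigma$ both preserve it, dispose of the filling case via the finite-order argument, and then observe that since $\sigma$ acts as an involution on the set of components, some component (or $\sigma$-orbit of size two) furnishes the desired symmetric or pre-symmetric curve.

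For the step you correctly flag as the crux --- that $f$ fixes each component of $\partial F$ rather than merely permuting them --- the paper simply invokes Ivanov's purity theorem (cited as Theorem~\ref{thm:pure}, and used in a slightly stronger form than stated there). Your direct argument in the nonseparating case does not work as written: two disjoint, non-isotopic, nonseparating simple closed curves \emph{can} be homologous (for $g\geq 3$), and the boundary-orientation comparison you describe is consistent with $f_\star[c]=[c]$ rather than contradicting it; one really needs to compare the homology carried by the complementary subsurfaces, not just the classes of the curves themselves. Your parenthetical alternative --- citing the standard fact that Torelli elements act trivially on any invariant multicurve --- is exactly what the paper does, so the proof is complete once you replace your sketch by that citation.
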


\begin{proof}

Let $a$ be an isotopy class of simple closed curves in $S_g$ that is fixed by $f$.  We may assume that $\sigma(a) \neq a$, for in that case there is nothing to do.
Since $f$ lies in $\SMod(S_g)$, we have:
\[ f(\sigma(a)) = \sigma(f(a)) = \sigma(a). \]
In other words, $f$ fixes the isotopy class $\sigma(a)$.  Since $\sigma$ has order 2, it  preserves the set of the isotopy classes $\{a,\sigma(a)\}$.

Let $\alpha$ and $\alpha'$ be representatives for $a$ and $\sigma(a)$ that are in minimal position.  Let $\mu$ denote the boundary of a closed regular neighborhood of $\alpha \cup \alpha'$, and let $\mu'$ denote the multicurve obtained from $\mu$ by deleting the inessential components of $\mu$ and replacing any set of parallel curves with a single curve.  Lemma~\ref{lemma:alexander} implies that both $f$ and $\sigma$ fix the isotopy class of $\mu'$.  By Theorem~\ref{thm:pure}, $f$ fixes the isotopy class of each component of $\mu'$.

Let $\mu_1, \dots, \mu_k$ denote the isotopy classes of the connected components of $\mu'$.   If $k=0$, that is to say $a$ and $\sigma(a)$ fill $S_g$, then it follows that $f$ has finite order (see \cite[Proposition 2.8]{primer}); hence $f$ is the identity.  Now suppose $k > 0$.  By construction, $i(\mu_i,\mu_j)=0$ for all $i$ and $j$, and $\sigma$ acts as an involution on the set of isotopy classes $\{[\mu_i]\}$.  Thus, there is either a singleton $\{[\mu_i]\}$ or a pair $\{[\mu_i],[\mu_j]\}$ fixed by $\sigma$, and the proposition is proven.
\end{proof}

\subsection{Analyzing individual stabilizers}
\label{subsection:cutting}

Let $a$ be the isotopy class of an essential simple closed curve in $S_g$.  Assume that $a$ is symmetric or pre-symmetric.  If $a$ is symmetric and separating, we choose a representative simple closed curve $\alpha$ so that ${s}(\alpha) = \alpha$, and if $a$ is nonseparating, we choose a representative simple closed curve $\alpha$ so that ${s}(\alpha) \cap \alpha = \emptyset$.  

Let $A$ denote either $\alpha$ or $\alpha \cup {s}(\alpha)$, depending on whether $a$ is separating or nonseparating, respectively.  Let $R_1$ and $R_2$ denote the closures in $S_g$ of the two connected components of $S_g-A$.  Let $R_1'$ and $R_2'$ denote the surfaces obtained from $R_1$ and $R_2$ obtained by collapsing each boundary component to a marked point.  Let $A'$ denote the set of marked points in either $R_1'$ or $R_2'$.

Each pair $(R_k',A')$ is homeomorphic to either $(S_g,p)$ where $p$ is a fixed point of ${s}$ or $(S_g,\{p_1,p_2\})$ where $p_1$ and $p_2$ are interchanged by ${s}$.  Since the hyperelliptic involution of $S_g$ induces a hyperelliptic involution of each $(R_k',A')$, we can define $\SMod(R_k',A')$ and $\SI(R_k',A')$ as in Section~\ref{sec:bg}.

We remark that if $a$ is symmetric and nonseparating, then one of the surfaces $R_k'$ is a sphere with two marked points.  For this surface, $\SMod(R_k',A') \cong \Z/2\Z$ and $\SI(R_k',A') = 1$.  For such $a$, it would have been more natural to take a representative $\alpha$ of $a$ that is symmetric.  However, the choice we made will allow us to make most of our arguments uniform for the various cases of $a$.

Let $\SMod(S_g,a)$ denote the stabilizer of the isotopy class $a$ in $\SMod(S_g)$, and let $\SMod(S_g,\vec a)$ denote the index 2 subgroup of $\SMod(S_g,a)$ consisting of elements that fix the orientation of $a$.  We now define maps
\[ \Psi_k : \SMod(S_g,\vec a) \to \SMod(R_k',A') \]
for $k=1,2$.

Let $f \in \SMod(S_g,\vec a)$, and let $\phi$ be a representative that commutes with $s$.  We may assume that $\phi$ fixes $\alpha$.  Since $\phi$ commutes with $s$, it must also fix $s(\alpha)$.  Since $f \in \SMod(S_g,\vec a)$, it follows that $\phi$ does not permute the components of $S_g-A$, and hence induces a homeomorphism $\phi_k'$ of  $R_k'$.  By construction, $\phi_k'$ commutes with the hyperelliptic involution of $R_k'$.  Finally, we define
\[ \Psi_k(f) = [\phi_k']. \]

We have the following standard fact; see \cite[Proposition 3.20]{primer}.

\begin{lem}
\label{lem:scut}
Let $g \geq 2$, and let $a$ be either a symmetric or pre-symmetric isotopy class of simple closed curves in $S_g$.  Define $R_i'$ and $A'$ as above.  The homomorphism
\[ \Psi_1 \times \Psi_2 : \SMod(S_g, \vec a) \to \SMod(R_1',A') \times \SMod(R_2',A') \]
is well defined and has kernel
\[ \ker(\Psi_1 \times \Psi_2) = \begin{cases} \langle T_a \rangle & a \ \ \text{symmetric} \\
\langle T_aT_{\sigma(a)} \rangle & a \ \ \text{pre-symmetric}  \end{cases} \]
\end{lem}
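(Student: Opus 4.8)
The plan is to deduce the lemma from the classical cutting homomorphism for mapping class groups, \cite[Proposition~3.20]{primer}, and then add in the hyperelliptic data. Write $A$ for the multicurve along which we cut: the single symmetric curve $\alpha$ when $a$ is symmetric and separating, and the pair of disjoint curves $\alpha,\sigma(\alpha)$ (interchanged by $\sigma$) in the remaining cases. Cutting $S_g$ along $A$ and collapsing the resulting boundary circles to marked points produces $R_1'$ and $R_2'$, and the classical cutting lemma supplies a well-defined homomorphism
\[ \Mod(S_g,\vec a) \longrightarrow \Mod(R_1',A') \times \Mod(R_2',A') \]
whose kernel $N$ is generated by the Dehn twists about the components of $A$; these twists die under the cut-and-collapse map because their supports become parallel to collapsed boundary circles. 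The homomorphism $\Psi_1 \times \Psi_2$ is then simply the restriction of this classical map to the subgroup $\SMod(S_g,\vec a)$.

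I would first settle well-definedness. Every $f \in \SMod(S_g,\vec a)$ has an $s$-equivariant representative, and since $f$ fixes the isotopy class of each component of $A$, a standard equivariant isotopy argument --- or, since $g \geq 2$, pushing down to $S_{0,2g+2}$ via Birman--Hilden (Theorem~\ref{thm:bh}), straightening there, and lifting --- lets us choose such a representative $\phi$ with $\phi(A)=A$ on the nose. Because $f$ preserves the orientation of $a$, hence (commuting with $\sigma$) the orientation of $\sigma(a)$, the map $\phi$ does not interchange the two complementary pieces, so it restricts to each $R_k$ and descends to a homeomorphism $\phi_k'$ of $R_k'$; as $\phi$ commutes with $s$, the descended map $\phi_k'$ commutes with the induced hyperelliptic involution of $R_k'$, so $\Psi_k(f)=[\phi_k'] \in \SMod(R_k',A')$. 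Independence of all the choices is then immediate from the classical lemma: any two such representatives of $f$ are isotopic in $\Mod(S_g,\vec a)$, hence induce the same class in $\Mod(R_k',A')$.

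Next I would compute the kernel. An element of $\SMod(S_g,\vec a)$ lies in $\ker(\Psi_1 \times \Psi_2)$ exactly when it lies in $N$, so it remains to intersect $N$ with $\SMod(S_g)$, for which I would use the identity $s\,T_\gamma\,s^{-1} = T_{s(\gamma)}$. When $a$ is symmetric, every component of $A$ is isotopic to $\alpha$, so $N = \langle T_a \rangle$; since $s\,T_a\,s^{-1} = T_{\sigma(a)} = T_a$, we get $N \subseteq \SMod(S_g)$ and the intersection is all of $\langle T_a \rangle$. When $a$ is pre-symmetric, $\alpha$ and $\sigma(\alpha)$ are non-isotopic, so $N = \langle T_a, T_{\sigma(a)} \rangle \cong \Z^2$, on which conjugation by $s$ acts by swapping the two generators; the fixed subgroup of that involution is the diagonal $\langle T_a T_{\sigma(a)} \rangle$, which is therefore $N \cap \SMod(S_g)$. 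In both cases the subgroup obtained automatically lies inside $\SMod(S_g,\vec a)$, since twists about $a$ and $\sigma(a)$ fix $a$ together with its orientation, and this is exactly the kernel in the statement.

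The conjugation computation and the case bookkeeping are routine; the one mildly delicate point is the symmetric \emph{nonseparating} case, where $A$ has two components but they are isotopic, so $N$ stays infinite cyclic and no diagonal appears. The main obstacle --- the only step not quoted essentially verbatim from \cite{primer} --- is the reduction in the second paragraph: producing an $s$-equivariant representative that fixes $A$ setwise, so that the formula defining $\Psi_k$ makes sense on the hyperelliptic level. This is precisely where the hypothesis $g \geq 2$ and the Birman--Hilden correspondence enter.
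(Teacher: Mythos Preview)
Your argument is correct and is exactly in the spirit of what the paper does: the paper gives no proof at all, simply labeling the lemma a ``standard fact'' and pointing to \cite[Proposition~3.20]{primer}. Your write-up is just the natural unpacking of that citation --- restrict the classical cutting/capping homomorphism to the centralizer of $\sigma$, use Birman--Hilden to produce an $s$-equivariant representative fixing $A$, and then intersect the classical twist kernel with $\SMod(S_g)$ via $sT_\gamma s^{-1}=T_{s(\gamma)}$ --- so there is nothing to compare.
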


Let $\SI(S_g,a)$ denote $\SI(S_g) \cap \SMod(S_g,a)$.  Since $\SI(S_g,a)$ is a subgroup of $\SMod(S_g,\vec a)$, we can restrict each $\Psi_k$ to $\SI(S_g,a)$.

\begin{lem}
\label{lemma:cutting}
Let $g \geq 2$.  For $k \in \{1,2\}$, the image of $\SI(S_g,a)$ under $\Psi_k$ lies in $\SI(R_k',A')$.
\end{lem}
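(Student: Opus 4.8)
The plan is to reduce the statement to the fact, already available to us, that $f$ acts trivially on $H_1(S_g;\Z)$, by transporting the relative homology group $H_1(R_k',A';\Z)$ back into $S_g$ through two natural isomorphisms.

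First I would set things up as in the definition of $\Psi_k$: let $f\in\SI(S_g,a)$, and let $\phi$ be a representative that commutes with $s$ and fixes $\alpha$. Since $\phi s = s\phi$ and $\phi$ fixes $\alpha$, it also fixes $s(\alpha)$, so $\phi$ preserves $A$ and each of its components; as noted in the construction of $\Psi_k$, $\phi$ then preserves each of $R_1$ and $R_2$, restricts to $\phi|_{R_k}$, and descends to the homeomorphism $\phi_k'$ of $R_k'$ with $\Psi_k(f)=[\phi_k']$. Since $\Psi_k$ already takes values in $\SMod(R_k',A')$, it remains only to show that $\phi_k'$ acts trivially on $H_1(R_k',A';\Z)$.

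Next I would bring in two $\phi$-equivariant isomorphisms. The collapsing map $(R_k,\partial R_k)\to(R_k',A')$ induces an isomorphism $H_1(R_k,\partial R_k;\Z)\cong H_1(R_k',A';\Z)$ intertwining the actions of $\phi|_{R_k}$ and $\phi_k'$ (both groups being identified with the reduced homology of $R_k/\partial R_k$). And excision applied to the decomposition $S_g=R_k\cup_A R_{3-k}$, after the standard thickening of $R_{3-k}$ across a collar of $A$, gives an isomorphism $H_1(R_k,\partial R_k;\Z)\cong H_1(S_g,R_{3-k};\Z)$ that is equivariant for $\phi$, which preserves the pair $(S_g,R_{3-k})$. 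So it suffices to show that $\phi$ acts trivially on $H_1(S_g,R_{3-k};\Z)$. For this I would use the long exact sequence of the pair $(S_g,R_{3-k})$: since $R_{3-k}$ and $S_g$ are connected, the map $H_0(R_{3-k};\Z)\to H_0(S_g;\Z)$ is injective, hence $H_1(S_g;\Z)\to H_1(S_g,R_{3-k};\Z)$ is surjective and $H_1(S_g,R_{3-k};\Z)$ is a quotient of $H_1(S_g;\Z)$. Since $f\in\SI(S_g,a)\subseteq\I(S_g)$, the map $\phi$ acts trivially on $H_1(S_g;\Z)$, hence trivially on this quotient. Unwinding the two isomorphisms, $\phi_k'$ acts trivially on $H_1(R_k',A';\Z)$, so $\Psi_k(f)\in\SI(R_k',A')$.

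The main thing to get right, rather than a genuine obstacle, is the naturality of these two identifications, so that triviality of the action really transports through them, together with the usual care needed to apply excision in the presence of boundary. A more hands-on alternative would be to choose a basis of $H_1(R_k',A';\Z)$ consisting of closed curves together with, in the case $|A'|=2$, a single arc joining the two marked points: the closed curves push into $S_g$, where their classes are fixed by $f$, and the arc can be completed to a closed curve in $S_g$ by adjoining an arc in $R_{3-k}$. That argument works too, but it requires tracking basepoints on $A$, which the homological route above avoids.
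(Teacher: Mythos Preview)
Your proof is correct and follows the same overall strategy as the paper: show that $H_1(R_k',A';\Z)$ is a natural quotient of $H_1(S_g;\Z)$, so that triviality of the action on the latter forces triviality on the former. The technical route to this surjection differs. The paper uses the relative Mayer--Vietoris sequence for $(R_1,R_2;A)$ to identify $H_1(S_g,A)\cong H_1(R_1,A)\times H_1(R_2,A)$, and then argues surjectivity of the composite $H_1(S_g)\to H_1(S_g,A)\to H_1(R_k',A')$ by hand, explicitly completing arcs in $R_k$ to closed curves in $S_g$ via arcs in $R_{3-k}$. You instead use excision to identify $H_1(R_k,\partial R_k)\cong H_1(S_g,R_{3-k})$ and then read off surjectivity of $H_1(S_g)\to H_1(S_g,R_{3-k})$ directly from the long exact sequence of the pair, using that $R_{3-k}$ is connected. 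Your route is a bit more streamlined, since it avoids the explicit arc-completion argument; the paper's approach has the minor advantage of treating both $R_1'$ and $R_2'$ simultaneously via the product. Amusingly, the ``hands-on alternative'' you sketch at the end is exactly the paper's surjectivity argument.
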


\begin{proof}

By the relative version of the Mayer--Vietoris sequence, we have an exact sequence:
\[ H_1(A,A) \to H_1(R_1,A) \times H_1(R_2,A) \to H_1(S_g,A) \to H_0(A,A). \]
(In this sequence, and in the rest of the proof, we take the coefficients for all homology groups to be $\Z$.)
The first and last groups are trivial, and so we have
\[  H_1(R_1,A) \times H_1(R_2,A) \cong H_1(S_g,A). \]
For each $k$, the map $R_k \to R_k'$ that collapses the boundary components to marked points induces an isomorphism
\[ H_1(R_k,A) \cong H_1(R_k',A'). \]

The natural map $H_1(S_g) \to H_1(S_g,A)$ is not surjective in general (it fails to be surjective in the case that $a$ is nonseparating).  However, the composition
\[ \pi : H_1(S_g) \to H_1(S_g,A) \stackrel{\cong}{\to} H_1(R_1',A') \times H_1(R_2',A') \to H_1(R_k',A') \]
is surjective for $k \in \{1,2\}$.  Indeed, any element $x$ of $H_1(R_1',A') \cong H_1(R_1,A)$ is represented by a collection of closed oriented curves in $R_1$ and oriented arcs in $R_1$ connecting $A$ to itself.  If we connect the endpoints of each oriented arc in $R_1$ by a similarly oriented arc in $R_2$, we obtain an element of $H_1(S_g)$ that maps to $x$.

By construction the following diagram is commutative:
\[
\xymatrix{
H_1(S_g) \ar[r]^{f_\star} \ar[d]_\pi & H_1(S_g) \ar[d]^\pi \\
H_1(R_k',A')\ar[r]^{\Psi_k(f)_\star} & H_1(R_k',A')
}
\]
and the lemma follows immediately.
\end{proof}

Let $\hat i(\cdot,\cdot)$ denote the algebraic intersection form on $H_1(S_g;\Z)$.
We will need the following lemma from \cite{companion}.

\begin{lem}
\label{lem:other paper}
Let $g \geq 2$, and let $a$ and $b$ be isotopy classes of oriented simple closed curves in $S_g$.  Suppose that $a$ is pre-symmetric, $b$ is symmetric, and $\hat i([a],[b])$ is odd.  Let $k \in \Z$.  If $[b] + k[a]$ is represented by a symmetric simple closed curve, then $k$ is even.
\end{lem}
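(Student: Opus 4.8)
The plan is to pass to the double branched cover $S_g \to S_g/\langle s\rangle = S_{0,2g+2}$ and work modulo $2$. Write $q_1,\dots,q_{2g+2}$ for the fixed points of $s$ and $\bar q_i$ for their images. The transfer homomorphism for this cover identifies $H_1(S_g;\Z/2\Z)$ with the space of even-cardinality subsets of $\{\bar q_1,\dots,\bar q_{2g+2}\}$ modulo complementation, in such a way that the mod-$2$ intersection pairing becomes $(T,T')\mapsto |T\cap T'|\bmod 2$ (nondegenerate of rank $2g$). Under this dictionary I would record three facts, each immediate from $s_\star=-I$ and the local form $z\mapsto z^2$ of the cover: (i) a symmetric nonseparating simple closed curve meets the fixed-point set in exactly two points $q_i,q_j$, and its mod-$2$ class is the two-element set $\{\bar q_i,\bar q_j\}$; (ii) a symmetric separating curve is null-homologous; and (iii) a pre-symmetric simple closed curve $a$ is disjoint from the fixed points, the quotient $\bar a=a/\langle s\rangle$ bounds a disk $D$ containing an even number of the $\bar q_i$, and $[a]\bmod 2$ is the set $A$ of those $\bar q_i$ lying in $D$ (note $A\ne\emptyset$ and $A$ is not everything, since $a$ is essential).

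With this in place the argument mod $2$ is short. Since $\hat i([a],[b])$ is odd we have $[b]\ne 0$, so $b$ is nonseparating and $[b]\bmod 2=\{\bar q_i,\bar q_j\}$; also $[a]\bmod 2=A$ with $|A|$ even, and oddness of $\hat i([a],[b])$ becomes $|A\cap\{\bar q_i,\bar q_j\}|=1$, say $\bar q_i\in A$, $\bar q_j\notin A$. Suppose for contradiction that $k$ is odd and that a symmetric simple closed curve $d$ satisfies $[d]=[b]+k[a]$. Then $[d]\bmod 2=\{\bar q_i,\bar q_j\}\triangle A=(A\setminus\{\bar q_i\})\cup\{\bar q_j\}$, which has cardinality $|A|$ and is neither empty nor everything; hence $d$ is nonseparating, so by (i) its mod-$2$ class must be a two-element set modulo complementation. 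This forces $|A|\in\{2,2g\}$. Whenever $4\le |A|\le 2g-2$ we have a contradiction and $k$ must be even, which already proves the lemma except in the case where $[a]\bmod 2$ is itself the class of a symmetric curve.

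It remains to treat $|A|=2$ (the case $|A|=2g$ being identical after replacing $D$ by the complementary disk). Here I would first observe that $\bar a$ is isotopic in the punctured sphere to the boundary of a regular neighborhood of an arc $\bar e$ joining the two fixed points enclosed by $D$; the preimage of that neighborhood is an annulus whose core is the lift $e$ of $\bar e$ — a symmetric simple closed curve — and whose two boundary components are isotopic in $S_g$ to $e$ and to $a$ alike, so that $[a]=\pm[e]$ with $e$ symmetric. One is thus reduced to a statement about two symmetric curves $b$ and $e$ with $\hat i([b],[e])$ odd, which I would try to settle either by localizing $b$ and $e$ into a one-holed-torus subsurface (where $[b],[e]$ span a rank-two symplectic sublattice and one can try to pin down exactly which primitive classes are carried by symmetric curves of $S_g$) or by introducing a quadratic refinement of the intersection form adapted to a distinguished branch point. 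I expect this last case to be the genuine obstacle: it is invisible to mod-$2$ homology, because a pre-symmetric curve and a symmetric curve can be homologous over $\Z$, so the desired parity has to be extracted from a finer invariant rather than from the counting argument of the previous paragraph.
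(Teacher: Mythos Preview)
Your mod-$2$ transfer argument is essentially correct and in fact complete; you simply failed to notice that your own computation in the ``remaining case'' already yields a contradiction.  You show that when $|A|=2$ the curve $\bar a$ is isotopic to the boundary of a regular neighborhood of an arc joining the two enclosed branch points, and that the preimage of this neighborhood is an annulus with core the symmetric curve $e$ and with boundary components $\alpha$ and $s(\alpha)$.  But the two boundary circles of an annulus are isotopic to each other (and to the core) inside that annulus, hence inside $S_g$.  Thus $\alpha$ is isotopic to $s(\alpha)$, i.e.\ $\sigma(a)=a$, contradicting the hypothesis that $a$ is pre-symmetric.  The case $|A|=2g$ is handled by passing to the complementary disk.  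So the range $|A|\in\{2,2g\}$ is vacuous, and your mod-$2$ argument for $4\le |A|\le 2g-2$ finishes the proof.  (In particular, for $g=2$ the lemma is vacuous: there are no pre-symmetric curves, consistent with Fact~\ref{fact:low genus}.)  There is no ``genuine obstacle'' and no need for a quadratic refinement.

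As for comparison with the paper: the paper does not prove this lemma at all.  It is imported from the companion paper \cite{companion} and simply quoted here, so your argument is an independent proof rather than a paraphrase.  The route via the identification of $H_1(S_g;\Z/2\Z)$ with even-cardinality subsets of the branch locus modulo complementation is natural from the Birman--Hilden viewpoint adopted throughout the paper, and it has the virtue of making the parity constraint completely transparent once one observes that a pre-symmetric class must correspond to a subset of size at least~$4$ and at most~$2g-2$.
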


\begin{lem}
\label{lem:image of pk}
Let $g \geq 2$, and let $a$ be the isotopy class of a simple closed curve in $S_g$ that is either symmetric or pre-symmetric.  Define $A'$ and $R_k'$ as above.   The homomorphism
\[ (\Psi_1 \times \Psi_2)|_{\SI(S_g,a)} : \SI(S_g,a) \to \SI(R_1',A') \times \SI(R_2',A') \]
is surjective with kernel
\[ \ker (\Psi_1 \times \Psi_2)|_{\SI(S_g,a)} = \begin{cases} \langle T_a \rangle & a \text{ is separating} \\ 1 & a \text{ is nonseparating}. \end{cases} \]
\end{lem}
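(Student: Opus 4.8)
The plan is to feed the general cutting statements (Lemmas~\ref{lem:scut} and~\ref{lemma:cutting}) into two facts special to the hyperelliptic setting --- the identity $\sigma_\star=-I$ on $H_1(S_g;\Z)$, and Lemma~\ref{lem:other paper} --- treating the separating and nonseparating cases separately. Two preliminary remarks are needed. First, a pre-symmetric simple closed curve is always nonseparating: it avoids the fixed points of $s$ and so descends homeomorphically onto a curve in $S_{0,2g+2}$ that cuts off a positive even number of branch points; hence that curve together with its $s$-image bounds a connected positive-genus subsurface, and re-gluing the image shows the curve itself is nonseparating. Thus ``$a$ separating'' here means ``$a$ symmetric separating''. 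Second, $\SI(S_g,a)\subseteq\SMod(S_g,\vec a)$, since an element of the Torelli group reversing the orientation of $a$ would act nontrivially on $H_1(S_g;\Z)$, either by negating $[a]$ (which is nonzero in the nonseparating case) or by interchanging the homologies of the two complementary subsurfaces (separating case).

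For the kernel I would start from Lemma~\ref{lem:scut}: inside $\SMod(S_g,\vec a)$ the kernel of $\Psi_1\times\Psi_2$ is $\langle T_a\rangle$ when $a$ is symmetric and $\langle T_aT_{\sigma(a)}\rangle$ when $a$ is pre-symmetric, so the kernel in question is one of these groups intersected with $\SI(S_g)$. If $a$ is separating it is symmetric and $T_a\in\SI(S_g)$, so the intersection is all of $\langle T_a\rangle$. If $a$ is nonseparating, then $\sigma_\star[a]=-[a]$ makes $T_{\sigma(a)}$ induce the same transvection on $H_1(S_g;\Z)$ as $T_a$ does, so $T_aT_{\sigma(a)}$ induces the square of that transvection; since $[a]\ne 0$, no nonzero power of $T_a$ or of $T_aT_{\sigma(a)}$ lies in the Torelli group, and the intersection is trivial.

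For surjectivity, given $(f_1,f_2)\in\SI(R_1',A')\times\SI(R_2',A')$, each $f_k$ fixes the points of $A'$ individually (a swap would reverse the class of an arc joining them), so the standard gluing argument underlying Lemma~\ref{lem:scut} (cf.\ \cite[Proposition 3.20]{primer}) yields a lift $\tilde f\in\SMod(S_g,\vec a)$ with $\Psi_k(\tilde f)=f_k$. The commutative square in the proof of Lemma~\ref{lemma:cutting} gives $\pi^{(k)}\circ\tilde f_\star=\pi^{(k)}$ for the two surjections $\pi^{(k)}\colon H_1(S_g;\Z)\to H_1(R_k',A';\Z)$. If $a$ is separating, $\pi^{(1)}\oplus\pi^{(2)}$ is injective, so $\tilde f_\star=\mathrm{id}$ and $\tilde f$ itself is the desired element of $\SI(S_g,a)$. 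If $a$ is nonseparating, then $\ker\pi^{(1)}\cap\ker\pi^{(2)}=\ker(H_1(S_g;\Z)\to H_1(S_g,A;\Z))=\Z[\alpha]$, so $\tilde f_\star v=v+\phi(v)[\alpha]$ with $\phi([\alpha])=0$, and invariance of the intersection form forces $\phi$ to be an integer multiple of $\hat i(\cdot,[\alpha])$; that is, $\tilde f_\star=(T_a^m)_\star$ for some $m\in\Z$. When $a$ is symmetric, $\tilde f T_a^{-m}$ commutes with $s$, acts trivially on $H_1(S_g;\Z)$, fixes $a$, and --- because $T_a\in\ker(\Psi_1\times\Psi_2)$ --- still maps to $(f_1,f_2)$; this is the required preimage. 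When $a$ is pre-symmetric the crucial point is that $m$ is even: I would choose a symmetric simple closed curve $b$ with $\hat i([a],[b])$ odd (possible because the symmetric curves of a standard chain generate $H_1(S_g;\Z/2\Z)$, on which the intersection form is nondegenerate), observe that $\tilde f(b)$ is again symmetric, and apply Lemma~\ref{lem:other paper} to $[\tilde f(b)]=[b]\pm m\,\hat i([a],[b])[a]$ to conclude that $\pm m\,\hat i([a],[b])$, and hence $m=2\ell$, is even; then $\tilde f\,(T_aT_{\sigma(a)})^{-\ell}$ commutes with $s$, acts trivially on $H_1(S_g;\Z)$ (using $(T_aT_{\sigma(a)})_\star=(T_a^2)_\star$), fixes $a$, and --- because $T_aT_{\sigma(a)}\in\ker(\Psi_1\times\Psi_2)$ --- maps to $(f_1,f_2)$.

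I expect the main obstacle to be the parity statement $m$ even in the pre-symmetric case: this is the only step that genuinely uses the hyperelliptic hypothesis rather than formal cutting, and it is carried by Lemma~\ref{lem:other paper} (from \cite{companion}) together with the input that symmetric simple closed curves generate $H_1(S_g;\Z)$. Everything else is bookkeeping with Lemmas~\ref{lem:scut} and~\ref{lemma:cutting} and the identity $\sigma_\star=-I$.
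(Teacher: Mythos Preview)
Your argument is correct and follows essentially the same route as the paper: compute the kernel by intersecting the kernel from Lemma~\ref{lem:scut} with the Torelli group, and for surjectivity lift to $\SMod(S_g,\vec a)$, show the lift acts on $H_1(S_g;\Z)$ as a power of the transvection along $[a]$, then correct by a power of $T_a$ or $T_aT_{\sigma(a)}$, invoking Lemma~\ref{lem:other paper} for the parity in the pre-symmetric case. Two small differences of emphasis: you front-load the symplectic calculation to conclude $\tilde f_\star=(T_a^m)_\star$ globally before correcting, whereas the paper first corrects on one class $[b]$ and then checks the rest; and you take $b$ symmetric with $\hat i([a],[b])$ odd rather than $\hat i([a],[b])=1$, which is exactly what Lemma~\ref{lem:other paper} requires and is more transparently available (indeed you also make explicit the point, left implicit in the paper, that $\tilde f(b)$ is again symmetric because $\tilde f$ commutes with $\sigma$).
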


\begin{proof}

The kernel of $(\Psi_1 \times \Psi_2)|_{\SI(S_g,a)}$ is $\ker(\Psi_1 \times \Psi_2) \cap \SI(S_g,a)$.  The description of $\ker (\Psi_1 \times \Psi_2)|_{\SI(S_g,a)}$ in the statement of the lemma then follows from Lemma~\ref{lem:scut}.

By Lemma~\ref{lemma:cutting}, we have
\[ (\Psi_1 \times \Psi_2)(\SI(S_g,a)) \subseteq \SI(R_1',A') \times \SI(R_2',A'). \]
It remains to show that $(\Psi_1 \times \Psi_2)|_{\SI(S_g,a)}$ is surjective.  Let $f' \in \SI(R_1',A') \times \SI(R_2',A')$.  Choose some $f \in \SMod(S_g,\vec a)$ that maps to $f'$.

Fix some orientation of $a$.  Consider the natural map
\[ \eta : H_1(S_g;\Z)/\langle [a] \rangle \to H_1(R_1',A';\Z) \times H_1(R_2',A';\Z).\]
The mapping classes $f$ and $f'$ induce automorphisms $f_\star$ and $f_\star'$ of $H_1(S_g;\Z)$ and $\textrm{Im}(\eta)$, respectively.  Since $f_\star([a])=[a]$, we further have that $f_\star$ induces an automorphism $\overline{f_\star}$ of $H_1(S_g;\Z)/\langle [a] \rangle$.

If we give $a$ an orientation, then it represents an element of $H_1(S_g;\Z)$.  There is a commutative diagram  
\[
\xymatrix{
H_1(S_g;\Z)/\langle [a] \rangle \ar[r]^{\overline{f_\star}} \ar[d]_\eta & H_1(S_g;\Z)/\langle [a] \rangle \ar[d]^\eta &\\
\textrm{Im}(\eta) \ar[r]^{f_\star'} & \textrm{Im}(\eta) & \hspace{-.5in} \subset \ \ H_1(R_1',A';\Z) \times H_1(R_2',A';\Z)
}
\]
Since $f_\star'$ is the identity and $\eta$ is injective, it follows that $\overline{f_\star}$ is the identity.

If $a$ is separating, then $[a]=0$ and so $f_\star = \overline{f_\star}$ is the identity.  Thus, $f$ is an element of $\SI(S_g,a)$, and since $\Psi_1 \times \Psi_2$ maps $f$ to $f'$, we are done in this case.

If $a$ is nonseparating, we can find an isotopy class $b$ of oriented symmetric simple closed curves in $S_g$ with $\hat i([a],[b])=1$.  Since $\overline{f_\star}$ is the identity, we have
\[ f_\star([b]) = [b] + k[a] \]
for some $k \in \Z$.

Our next goal is to find some $h \in \ker(\Psi_1 \times \Psi_2)$ so that $(hf)_\star$ fixes $[b]$.  If $a$ is symmetric, then we can simply take $h$ to be either $T_a^k$ (cf. \cite[Proposition 8.3]{primer}).  If $a$ is pre-symmetric, then this does not work, since $T_a^k \notin \SMod(S_g)$.  However, if $a$ is pre-symmetric, then Lemma~\ref{lem:other paper} implies that $k$ is even.  Thus we can take $h$ to be $\left(T_a T_{\sigma(a)}\right)^{k/2}$.

Now, let $x$ be any element of $H_1(S_g;\Z)$.  Since $\overline{f_\star}$ is the identity and since $h$ induces the identity map on $H_1(S_g;\Z)/\langle [a] \rangle$ we have $(hf)_\star(x) = x + j[a]$ for some $j \in \Z$.  Since $(hf)_\star$ is an automorphism of $H_1(S_g;\Z)$, we have:
\begin{eqnarray*}
\hat i(x,[b]) &=& \hat i ((hf)_\star(x),(hf)_\star([b])) \\
&=& \hat i (x+j[a],[b]) \\
&=& \hat i (x,[b]) + j\, \hat i([a],[b]) \\
&=& \hat i (x,[b]) + j
\end{eqnarray*}
and so $j=0$.  Thus $(hf)_\star(x)=x$ and so $hf \in \SI(S_g,a)$.  Since $h \in \ker(\Psi_1 \times \Psi_2)$, we have that $(\Psi_1 \times \Psi_2)(hf) = f'$, and we are done.
\end{proof}

\subsection{The inductive step}

In Section~\ref{section:reduction} we showed that reducible elements of $\SI(S_g)$ are strongly symmetrically reducible, and in Section~\ref{subsection:cutting} we studied strongly symmetrically reducible elements of $\SI(S_g)$.  We now combine the results from these sections with our Birman exact sequences for $\SI(S_g)$ in Section~\ref{section:bes} in order to prove Theorem~\ref{thm:reducibles}.

\begin{proof}[Proof of Theorem~\ref{thm:reducibles}]

Let $g \geq 2$.  As in the statement of the theorem, we assume that $\SI(S_k)$ is generated by reducible elements for all $k \in \{0,\dots,g\}$.  Fix some $k \in \{0,\dots,g\}$.  We make the inductive hypothesis that $\SI(S_j)$ is generated by Dehn twists about symmetric separating curves for $j \in \{0,\dots,k-1\}$, and we will show that $\SI(S_k)$ is generated by such Dehn twists.  The base cases $k=0,1$ are trivial since $\SI(S_0)=\SI(S_1)=1$.

Let $f \in \SI(S_k)$ be a reducible element.  By Theorem~\ref{thm:pure}, we have that $f$ is strongly reducible.  By Proposition~\ref{prop:reduction}, $f$ is symmetrically strongly reducible.  In other words, there is an isotopy class $a$ of essential simple closed curves in $S_k$, where $a$ is either symmetric or pre-symmetric, and where $f \in \SI(S_k,a)$.  

Define the subsets $A,R_1,R_2 \subset S_k$ as in Section~\ref{subsection:cutting}.  As per Lemma~\ref{lem:image of pk}, there is a (surjective) homomorphism
\[ (\Psi_1 \times \Psi_2)|_{\SI(S_k)} : \SI(S_k,a) \to \SI(R_1',A') \times \SI(R_2',A'), \]
and each element of the kernel is a power of a Dehn twist about a symmetric separating curve (when $a$ is nonseparating, the kernel is trivial).  For $i=1,2$, each Dehn twist about a symmetric separating simple closed curve in $\SI(R_i',A')$ has a preimage in $\SI(S_k,a)$ that is also a Dehn twist about a symmetric separating curve.  Thus, to prove the theorem, it suffices to show that each element of $\SI(R_i',A')$ is a product of Dehn twists about symmetric separating curves.

Fix $i \in \{1,2\}$.  Say that $R_i'$ has genus $g_i$.  Note that $0 \leq g_i < k$.  Combining Theorem~\ref{thm:sibes1} with Theorem~\ref{thm:topchar}, we have a short exact sequence
\[ 1 \to \SIBK(R_i',A') \to \SI(R_i',A') \to \SI(S_{g_i}) \to 1, \]
where each element of $\SIBK(R_i',A')$ is a product of Dehn twists about symmetric separating simple closed curves in $(R_i',A')$ (in the case that $a$ is separating, we have $\SIBK(R_i',A')=1$).  Our inductive hypothesis on $k$ gives that $\SI(S_{g_i})$ is generated by Dehn twists about symmetric separating curves.  Since each such Dehn twist has a preimage in $\SI(R_i',A')$ that is also a Dehn twist about a symmetric separating curve, it follows that each element of $\SI(R_i',A')$ is a product of Dehn twists about symmetric separating curves, and we are done.
\end{proof}


\bibliographystyle{plain}
\bibliography{sibk}

\end{document}